\documentclass[a4paper, 12pt, one column, aas_macros]{amsart}
\usepackage[english]{babel}
\usepackage[english]{babel}
\usepackage{graphicx}
\usepackage[latin1]{inputenc}
\usepackage{amsmath,amssymb,hyperref,srcltx,mathtools}
\usepackage{amsfonts}%
\usepackage[dvips]{geometry}
\usepackage[normalem]{ulem}
\usepackage{tikz-cd}
\usepackage[shortlabels]{enumitem} 

\newcommand{\Oc}{\mbox{${\mathcal O}$}}
\newcommand{\Oh}{\mbox{$\hat{\mathcal O}$}}
\newcommand{\diff}[2]{\mbox{{\rm Diff}{${\,}_{#1}({\mathbb C}^{#2},0)$}}}
\newcommand{\diffh}[2]{\mbox{$\widehat{\rm Diff}{{}_{#1}({\mathbb C}^{#2},0)}$}}

\newcommand{\cn}[1]{\mbox{(${\mathbb C}^{#1},0$)}}

\newtheorem{pro}{Proposition}

\newtheorem{cor}{Corollary}
\newtheorem{rem}{Remark}

\DeclareMathOperator{\fix}{Fix}

\usepackage{xwatermark}
\usepackage{xcolor}

\newtheorem{lemma}{Lemma}

\newtheorem{thm}{Theorem}
\newtheorem*{theorem*}{Theorem}

\newtheorem*{main theorem*}{Fixed Point Curve Theorem}
\newtheorem*{P-D*}{Poincar\'{e}-Dulac normal form}
\newtheorem*{hsmtb*}{Holomorphic Stable Manifold Theorem for Diffeomorphisms}
\newtheorem*{sierpinski*}{Sierpi\'nski Theorem}
\newtheorem*{hsmtv*}{Holomorphic Stable Manifold Theorem for Vector Fields}
\newtheorem{example}{Example}

\theoremstyle{remark}
\newtheorem{defi}{Definition}

\newcommand{\C}{\mathbb{C}}

\newcommand{\nn}{\mathbb{N}}  
\newcommand{\zz}{\mathbb{Z}} 
\newcommand{\cc}{\mathbb{C}}

\newcommand{\codim}{\mathrm{codim}}
\newcommand{\mm}{\mathfrak{m}}
\newcommand{\fg}{{\mathfrak T}_{G} }

\newcommand{\spec}{\mathrm{spec}}

\title[Stability of closed leaves holomorphic foliations]{Stability of closed leaves holomorphic foliations via   torsion behavior of groups of germs }

\author{Javier Rib\'{o}n}
\address{Instituto de Matem\'{a}tica e Estat\'\i stica \\
Universidade Federal Fluminense\\
Campus do Gragoat\'a\\
Rua Marcos Valdemar de Freitas Reis s/n, 24210\,-\,201 Niter\'{o}i, Rio de Janeiro - Brasil }

\email{jribon@id.uff.br}
 
\thanks{MSC-class. Primary: 32M25, 34M45, 37F75; Secondary: 34C45, 37C85, 37C25, 32H50} 
\thanks{Keywords: Holomorphic Foliations, holonomy, periodic orbit conjecture, local diffeomorphism, local dynamics,  finite orbits, invariant varieties}
\begin{document}

\begin{abstract}
Consider a compact leaf $\mathcal{L}$ of a holomorphic foliation $\mathcal{F}$ such that
all the leaves of the restriction of $\mathcal{F}$ to some neighborhood $U$ of $\mathcal{L}$ 
are closed in $U$. We show that there exists an invariant germ of analytic set $V$ 
in a neighborhood of $\mathcal{L}$, of dimension higher than $\dim (\mathcal{F})$, 
consisting of compact leaves, such that the volume of 
the leaves in $V$ is uniformly bounded by above. 
We also provide a globalization of this result if the ambient manifold is projective. 

In order to study closed leaves foliations, and via the holonomy representation,  
we introduce a new concept, of independent interest, 
for  subgroups $G$ of germs of holomorphic diffeomorphisms, the so called {\it torsion locus}.
We show that it is non-trivial if $G$ has finite orbits. 
 
\end{abstract}
\maketitle
\tableofcontents

\section{Introduction}\label{sec:introduction}
The main goal of this paper is introducing and proving a semi-local result on the {\it periodic orbit conjecture} in the holomorphic category.
The conjecture, due to Reeb, claims that if all leaves of a regular foliation in a compact manifold are compact sets then the volume of leaves is uniformly bounded, or
equivalently, each leaf $\mathcal{L}$ is stable, i.e. there exists a neighborhood basis of $\mathcal{L}$ whose sets are invariant by the foliation
\cite{Epstein:leaves_compact}.
This is also equivalent to the finiteness of each holonomy group associated to any leaf \cite{Epstein:leaves_compact}.
The conjecture does not hold in general, the first
counterexample to the conjecture is due to Sullivan \cite{Sullivan:periodic}. 
Even in the real analytic setting there are counterexamples by 
Thurston (cf. \cite{Sullivan:periodic}) and Epstein and Vogt \cite{Epstein-Vogt:counterexample_dim_3}.

It is not known whether the conjecture holds in the general holomorphic setting, even
if it does for K\"{a}hler manifolds \cite{Edwards-Millet-Sullivan} by a result of Edwards, Millet and Sullivan.
Moreover, Pereira showed a local to global stability result in the K\"{a}hler case \cite{Pereira:stability}.
In contrast, Holmann provided a negative result, proving that a 
holomorphic counterexample to the conjecture cannot be provided by  the action of a holomorphic vector field 
\cite[Holmann]{Holmann:anal_period}  \cite{Holmann:stab_hol}.

\strut

In this paper, we approach the holomorphic problem by considering foliations whose leaves are closed 
in a neighborhood of a given compact leaf  $\mathcal{L}$.
\begin{defi}
    \label{def:closed_leaves}
    We say that $(M, \mathcal{F}, {\mathcal L})$ (or just $(\mathcal{F}, {\mathcal L})$ if $M$ is implicit)
    is a {\it closed leaves foliation} (at $\mathcal{L}$)  if $\mathcal{L}$ is a compact leaf of a holomorphic foliation $\mathcal{F}$
    defined in a complex manifold $M$ and there exists an open neighborhood  $U$ of $\mathcal{L}$ that satisfies that all  all leaves of the 
    restriction $\mathcal{F}_{U}$ of $\mathcal{F}$ to $U$ are closed in $U$. 
\end{defi}
The study of closed leaves foliations is useful for the {\it periodic orbit conjecture} because if $M$ and all leaves of $\mathcal{F}$ are compact then 
the leaves of $\mathcal{F}_{U}$ are closed for any  open set $U$. 
The leaves of $\mathcal{F}_U$ 
are obtained by considering the connected components of $\mathcal{L}' \cap U$ for any leaf $\mathcal{L}'$ of $\mathcal{F}$.
Let us remark that the closed leaves hypothesis is weaker than the compact leaves hypothesis. 
Indeed, $\mathcal{F}$ could be a singular foliation with singularities outside of 
$\overline{\mathcal L} = \mathcal{L}$. Moreover, in non-compact manifolds, 
the leaves of a closed leaves foliation are not necessarily compact sets.  

\strut

Proving the conjecture amounts to show that the ambient manifold is stable, i.e. consisting of stable leaves (cf. Definition \ref{def:stable}). 
The main result of the paper is a step forward in this direction: there exists a 
stable analytic set in intermediate dimension,  
i.e. higher than $\dim (\mathcal{F}) = \dim (\mathcal{L})$ but maybe smaller than 
$\dim (M)$. 

\begin{thm}
\label{thm:main}
Let $(\mathcal{F}, {\mathcal L})$ be a closed leaves foliation at $\mathcal{L}$.
Then there exists an $\mathcal{F}$-invariant germ of analytic set $(V, {\mathcal L})$ 
of dimension greater than $\dim (\mathcal{F})$, 
such that any leaf of $\mathcal{F}_{V}$ is stable.
In particular all  leaves of $V$ are compact.
Moreover, $V$  
is the saturated of $\mathfrak{T}_{\mathcal{H}_{\mathcal{F},\mathcal{L}}}$
with respect to $\mathcal{F}$.
\end{thm}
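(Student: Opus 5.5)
We reduce the statement to the holonomy group and then transport the conclusion back to the foliation. Fix a point $p \in \mathcal{L}$ and a transversal $\Sigma \cong (\mathbb{C}^n,0)$ to $\mathcal{F}$ at $p$, with $n = \codim(\mathcal{F})$. Let $G = \mathcal{H}_{\mathcal{F},\mathcal{L}} \subset \diff{}{n}$ be the holonomy group of $\mathcal{L}$, realized on $\Sigma$. Since $\mathcal{L}$ is compact, $\pi_1(\mathcal{L})$ is finitely generated, and hence $G$ is finitely generated.

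\textbf{Step 1: finite orbits.} We show that $G$ has finite orbits. Take $z \in \Sigma$ close to $0$. The leaf $\mathcal{L}_z$ of $\mathcal{F}_U$ through $z$ meets $\Sigma$ along the $G$-orbit of $z$, at least as long as the holonomy germs are defined. Since the leaves of $\mathcal{F}_U$ are closed in $U$, $\mathcal{L}_z \cap \overline{\Sigma'}$ is closed for a compact sub-transversal $\Sigma'$. It is also a countable union of points, since these are plaques meeting a transversal. By Baire (the Sierpi\'nski-type argument recalled earlier), the set $\mathcal{L}_z \cap \Sigma'$ has isolated points. Using the $G$-action we want to deduce it is discrete, hence finite. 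The delicate point is that the orbit is only defined locally, so one must work with the pseudogroup and choose nested neighborhoods to obtain finite orbits in the sense used earlier in the paper.

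\textbf{Step 2: the torsion locus.} We apply the earlier result that the torsion locus $\mathfrak{T}_G$ of a finitely generated subgroup of $\diff{}{n}$ with finite orbits is a non-trivial $G$-invariant germ of analytic set, i.e.\ of positive dimension. We also use the property established for it: the orbits in $\mathfrak{T}_G$ have uniformly bounded cardinality, equivalently the restriction of $G$ to $\mathfrak{T}_G$ is a finite group, or each point has finite-order stabilizer of bounded order. I expect this to be exactly how $\mathfrak{T}_G$ is defined, namely the set where the relevant elements have bounded period.

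\textbf{Step 3: saturation.} Define $V$ as the $\mathcal{F}$-saturation of $\mathfrak{T}_G$ in a small neighborhood of $\mathcal{L}$, obtained by holonomy transport along paths in $\mathcal{L}$ from $\Sigma$. Because $\mathfrak{T}_G$ is $G$-invariant and analytic, the saturation is a well-defined germ of analytic set along $\mathcal{L}$. Locally, in a foliated chart, it is a product of a plaque with the transported copy of $\mathfrak{T}_G$. Its dimension is
\[
\dim \mathfrak{T}_G + \dim \mathcal{F} > \dim \mathcal{F},
\]
and it is $\mathcal{F}$-invariant.

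\textbf{Step 4: stability.} For a leaf $\mathcal{L}'$ of $\mathcal{F}_V$ through $z \in \mathfrak{T}_G$, the intersection $\mathcal{L}' \cap \Sigma$ is the $G$-orbit of $z$. This orbit is finite of cardinality at most the bound from Step 2. Hence $\mathcal{L}'$ is a finite covering of $\mathcal{L}$ near $\mathcal{L}$, so it is compact with volume uniformly bounded. The holonomy of $\mathcal{L}'$ within $V$ is given by the stabilizer of $z$ in $G|_{\mathfrak{T}_G}$, which is finite. By the Reeb/Epstein criterion recalled in the introduction, uniform bound of volumes, equivalently finite holonomy, gives stability of each leaf of $\mathcal{F}_V$.

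The main obstacle is Step 1, together with guaranteeing in Step 4 that nearby leaves do not escape the neighborhood before closing up. For the latter I would first choose $\Sigma$ and $U$ so that a tubular neighborhood of $\mathcal{L}$ is contained in $U$. I would then use the finiteness of $G|_{\mathfrak{T}_G}$ to build invariant neighborhoods as unions of finitely many transported copies of small polydiscs.
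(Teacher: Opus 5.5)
Your proposal is correct and follows essentially the same route as the paper. The chain is: finite orbits of the finitely generated holonomy group; then Theorem~\ref{thm:main_g_fg} to get $\dim \mathfrak{T}_G \geq 1$ with $G_{|\mathfrak{T}_G}$ finite; then saturation plus Reeb local stability. Your Step~1 is Proposition~\ref{pro:closed_finite}, whose precise form uses that a leaf closed in $U$ meets the transversal in a discrete closed set and passes to the holonomy pseudogroup of a relatively compact $U' \subset U$ rather than to the ``$G$-orbit''. Also, $\mathfrak{T}_G$ is defined as $\bigcap_{\phi \in G,\, \mm(\phi)=1} \fix(\phi)$, not via bounded periods, but your argument only uses the established finiteness of $G_{|\mathfrak{T}_G}$, so this does not affect it.
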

We will explain the last sentence of the result soon. 
We will find $V$ such that the holonomy group of $\mathcal{F}_{V}$ associated to $\mathcal{L}$ is finite. 
Stability follows from the Reeb local stability theorem (cf. \cite{Wu-Reeb,Camacho-Lins_Neto:foliations}).
%

We can globalize the invariant set $V$ in Theorem \ref{thm:main} if $M$ is projective. 
\begin{thm}
    \label{thm:main_p}
    Let $(M, \mathcal{F}, {\mathcal L})$ be a closed leaves foliation, where $M$ is a complex projective manifold. 
    Then there are $\mathcal{F}$-invariant irreducible projective varieties $W_1, \hdots, W_k$, containing $\mathcal{L}$,
    and dominant rational first integrals 
    $f_j: W_j \to S_j$, where 
    $S_j$ is a projective submanifold of positive dimension of an irreducible component of the Hilbert scheme of varieties
    and the general $f_j$-fiber is a leaf of $\mathcal{F}$ for any $1 \leq j \leq k$.
    Moreover, the germ $(W_1 \cup \hdots \cup W_k , {\mathcal L})$ is the saturated of $\mathfrak{T}_{\mathcal{H}_{\mathcal{F},\mathcal{L}}}$
    with respect to $\mathcal{F}$.
\end{thm}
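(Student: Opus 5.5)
The plan is to deduce Theorem \ref{thm:main_p} from Theorem \ref{thm:main} by an algebraization argument that uses the Hilbert scheme (or the Barlet/Chow space) of $M$. Theorem \ref{thm:main} gives an $\mathcal{F}$-invariant germ $(V,\mathcal{L})$ with $\dim V>\dim\mathcal{F}$. It is the saturation of $\mathfrak{T}_{\mathcal{H}_{\mathcal{F},\mathcal{L}}}$, and all its leaves are compact and stable. Stability means each leaf has finite holonomy in $V$, so near $\mathcal{L}$ the leaves of $\mathcal{F}_V$ form a family of compact submanifolds of uniformly bounded volume. Since $M$ is projective, Chow's theorem makes every compact leaf in $V$ a projective subvariety of $M$. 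Bounded volume then bounds the degree with respect to a fixed ample class, so all these leaves have finitely many Hilbert polynomials. They therefore lie in finitely many irreducible components $\mathcal{H}_1,\dots,\mathcal{H}_m$ of the Hilbert scheme, each of which is projective.

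Next I would build the projective varieties. The germ $V$ decomposes into irreducible components $V_1,\dots,V_k$ at $\mathcal{L}$, each invariant, since the saturation of an invariant analytic germ has invariant irreducible components. For each $V_j$, the map sending a point $p$ to the leaf through $p$ defines a holomorphic map $\phi_j$ from a neighbourhood of $\mathcal{L}$ in $V_j$, away from a proper analytic subset, into some $\mathcal{H}_{i(j)}$. The map is holomorphic by the universal property of the Hilbert scheme applied to the flat family of leaves: finite holonomy and Reeb stability give a local product structure after a finite cover. Let $T_j$ be the image germ; it has dimension $\dim V_j-\dim\mathcal{F}>0$. Let $S_j$ be the Zariski closure of $T_j$ in $\mathcal{H}_{i(j)}$, which is an irreducible projective variety. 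Let $W_j$ be the closure of the union of the subvarieties parametrized by $S_j$, that is, the image in $M$ of the universal family $\mathcal{U}_j\to S_j$. Then $W_j$ is an irreducible projective variety containing $V_j$, and hence containing $\mathcal{L}$.

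The crucial step is to show that $\dim S_j=\dim T_j$, so that $W_j$ has the same dimension as $V_j$ and is $\mathcal{F}$-invariant, with the general fibre of $\mathcal{U}_j\to S_j$ being a leaf. For this I would use the following incidence set: the points $s\in S_j$ whose fibre $\mathcal{U}_{j,s}$ is tangent to $\mathcal{F}$. Tangency is an algebraic condition on $s$. Indeed, $\mathcal{F}$ is algebraic on projective $M$, being defined by a twisted form or distribution, which is coherent and hence algebraic by GAGA. So this set is Zariski closed, and it contains the open germ $T_j$, so it equals $S_j$. Every fibre is then $\mathcal{F}$-invariant, and $W_j$, a union of invariant subvarieties, is invariant. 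Since every leaf has dimension $\dim\mathcal{F}$, a general fibre lies in a single leaf and has the same dimension. It is then a compact, hence closed, union of leaves of that dimension, so it is a leaf away from the singular set.

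The map $f_j:W_j\dashrightarrow S_j$ sends a general point to the unique fibre through it. Uniqueness holds generically because it holds on the open germ near $\mathcal{L}$, where distinct leaves are disjoint. It follows that $\mathcal{U}_j\to W_j$ is birational, and $f_j$ is a dominant rational first integral. If needed, I would replace $S_j$ by its normalization, or restrict to a smooth model, to match the statement that $S_j$ is a submanifold. Finally, $(W_1\cup\dots\cup W_k,\mathcal{L})$ equals $(V,\mathcal{L})$ as germs: $W_j\supseteq V_j$, and the two have equal dimension and are irreducible along $\mathcal{L}$. The only caveat is to check that the germ of $W_j$ along $\mathcal{L}$ adds no further components. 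This follows because, near $\mathcal{L}$, the leaves parametrized by $S_j$ that are close to $\mathcal{L}$ correspond to points near $[\mathcal{L}]$, and these are exactly $T_j$ by the local structure. Since $V$ is the saturation of $\mathfrak{T}_{\mathcal{H}_{\mathcal{F},\mathcal{L}}}$, the last claim follows.

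I expect the main obstacle to be the holomorphy and injectivity of the leaf map near $\mathcal{L}$, together with controlling the germ of $W_j$ at $\mathcal{L}$. Leaves close to $\mathcal{L}$ may be finite covers of it with multiplicity, as happens when the holonomy is nontrivial but finite. So the point $[\mathcal{L}]$ is a special point of $S_j$, and near it distinct components of $V$ may map to the same component of the Hilbert scheme. I would first handle this by working with the generic leaves, whose holonomy is trivial, and taking closures.
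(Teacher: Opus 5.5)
\textbf{There is a genuine gap at the end of your argument.} The failing step is the one you flag as crucial, $\dim S_j=\dim T_j$, together with the equality of germs $(W_j,\mathcal{L})=(V_j,\mathcal{L})$.

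Closedness of the tangency locus only shows that every fibre over $S_j$ is $\mathcal{F}$-invariant, hence that $W_j$ is invariant. It says nothing about $\dim S_j$. The Zariski closure of an analytic germ can have strictly larger dimension than the germ. Even when the dimensions agree, the germ $(S_j,[\mathcal{L}])$ of an irreducible variety can have several branches, so $(W_j,\mathcal{L})$ need not be irreducible along $\mathcal{L}$. What you must exclude is that $W_j$ contains, near $\mathcal{L}$, compact leaves lying outside $V$, i.e.\ that points of $S_j$ near $[\mathcal{L}]$ lie outside $T_j$. Your justification, ``these are exactly $T_j$ by the local structure'', is circular. Theorem \ref{thm:main} describes only the leaves inside $V$ and says nothing about other compact leaves near $\mathcal{L}$. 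Note that you never actually use that $V$ is the saturated of the torsion locus, and that is precisely what is needed here.

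\textbf{The missing ingredient} is the maximality of the torsion locus, combined with a volume bound.
\begin{enumerate}
\item All fibres of the flat family $\mathcal{U}_j\to S_j$ have the same Hilbert polynomial, hence the same degree and the same Fubini--Study volume.
\item So the leaves of $\mathcal{F}_{W_j}$ near $\mathcal{L}$ have uniformly bounded volume. They therefore meet $T$ in at most some uniform number $N$ of points.
\item Hence every element of $\mathcal{H}_{\mathcal{F},\mathcal{L}}$, restricted to the invariant germ $(W_j\cap T,{\bf z}_0)$, has order dividing $N!$.
\item The maximality statement of Theorem \ref{thm:main_g} (Proposition \ref{pro:fg}) then gives $(W_j\cap T,{\bf z}_0)\subset\mathfrak{T}_{\mathcal{H}_{\mathcal{F},\mathcal{L}}}$, and thus $(W_j,\mathcal{L})\subset(V,\mathcal{L})$.
\end{enumerate}
With $V_j\subset W_j$ this gives the sandwich
\[ (V,\mathcal{L})=\textstyle\bigcup_j (V_j,\mathcal{L})\subset\bigcup_j (W_j,\mathcal{L})\subset(V,\mathcal{L}), \]
which is how the paper concludes. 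With it you need neither $\dim S_j=\dim T_j$ nor irreducibility of $(W_j,\mathcal{L})$ as separate inputs.

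\textbf{Comparison of the first half.} Your construction of $S_j$ as the Zariski closure of the image of the leaf map, followed by the closed tangency condition to make all fibres invariant, is a workable alternative to the paper's route. The paper instead invokes G\'{o}mez-Mont's countable family of Hilbert-scheme strata with invariant universal families. It then uses a countability (Baire-type) argument to find a stratum $W^{\ell}\supseteq V_j$, which avoids having to prove holomorphy of the leaf map near $\mathcal{L}$.
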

In order to prove Theorem \ref{thm:main}, we consider the holonomy representation \cite{Ehresmann-Shih:stabilite}  (cf. \cite[section IV.1]{Camacho-Lins_Neto:foliations})
\[ \Phi_{\mathcal{F}, \mathcal{L}}: \pi_{1} (\mathcal{L}, {\bf z}_{0}) \to \mathrm{Diff}(T,{\bf z}_{0}), \]
and the holonomy subgroup $\mathcal{H}_{\mathcal{F},\mathcal{L}} := \Phi_{\mathcal{F}, \mathcal{L}}( \pi_{1} (\mathcal{L}, {\bf z}_{0}))$ of
the group $\mathrm{Diff}(T,{\bf z}_{0})$, of 
germs of holomorphic diffeomorphisms defined  in a neighborhood of ${\bf z}_{0}$ in a transverse section $T$, where $T \cap \mathcal{L} = \{{\bf z}_{0}\}$.
We can consider $\mathcal{H}_{\mathcal{F},\mathcal{L}}$ as a subgroup of $\diff{}{n}$
by identifying the germ $(T,{\bf z}_{0})$ with $(\mathbb{C}^{n}, {\bf 0})$, where $n = \codim (\mathcal{F})$.

Given a 
closed leaves foliation $(\mathcal{F}, \mathcal{L})$, the intersection of a leaf with a compact subset of a transverse section $T$
is always a finite set (Proposition \ref{pro:closed_finite}). So if we consider the pseudogroup (cf. Definition \ref{def:pseudogroup})
of holonomy (cf. Definition \ref{def:hol_pseudo}) $\mathcal{G}$ of $\mathcal{F}_{U}$ in $T$, where $U$ is a small neighborhood of $\mathcal{L}$, it has finite orbits. 

The analogue of a closed leaves foliation in the context of $\diff{}{n}$ is a finite orbits subgroup. 
Let $G < \diff{}{n}$, we say that $G$ has the {\it finite orbits property} if it is induced by a finite orbits  pseudogroup 
$\mathcal{G}$ in an open subset $U$ of $\C^{n}$, i.e.  $G$ is the group of germs at ${\bf 0}$ of maps 
$\phi: V \to W$ in $\mathcal{G}$, where $V, W \subset U$, such that $\phi ({\bf 0}) = {\bf 0}$
(Definition \ref{def:fin_orb_prop}). The above discussion implies that 
the holonomy group $\mathcal{H}_{\mathcal{F},\mathcal{L}}$ has finite orbits if 
 $(\mathcal{F}, \mathcal{L})$ is a closed leaves foliation.

Now, already in the setting of groups of germs of biholomorphisms, we introduce in this work   
a new concept for subgroups $G$ of $\diff{}{n}$ that would be central to prove 
Theorems \ref{thm:main} and \ref{thm:main_p}, the so called {\it torsion locus} $\fg$ of $G$. 
It is the maximal germ   $\mathfrak{T}$ of analytic set at ${\bf 0}$ such that 
$G_{|\mathfrak{T}}$ consists of finite order elements (Definition \ref{def:fg} and Proposition \ref{pro:fg}). 
\begin{example}
We can associate to a given $\phi \in \diff{}{n}$ the so called {\it index of embeddability} $\mm (\phi)$ of $\phi$ (Definition \ref{def:embed}).
One of its properties is that if $\phi_{|V}$ has finite order, where $V$ is a germ of analytic set at ${\bf 0}$, then 
$V \subset \fix (\phi^{\mm (\phi)})$ (Corollary \ref{cor:per_to_fix}). 
Thus, given a subgroup $G$ of $\diff{}{n}$, we have $\fg = \cap_{\phi \in G} \fix (\phi^{\mm (\phi)})$.
In the particular case in which $\phi$ is tangent to the identity, i.e. $D_{\bf 0} \phi = \mathrm{id}$, the index $\mm (\phi)$ is equal to $1$. 
So, we obtain $\fg = \fix (G) = \cap_{\phi \in G} \fix (\phi)$ if $G$ consists of tangent to the identity elements.

The torsion locus is  trivial for $\mathfrak{T}_{\langle z+ z^{2} \rangle} = \{ {\bf 0} \}$, where $z + z^{2} \in \diff{}{}$.
It coincides with $(\C^{n}, {\bf 0})$ if and only if all elements of $G$ have finite order. 
\end{example}
%
 
The key property of finite orbits subgroups of  $\diff{}{n}$ is that their torsion loci are non-trivial.
\begin{thm}
\label{thm:main_g}
Let $G$ be a finite orbits  subgroup of $\diff{}{n}$. Then $G$ is virtually solvable, 
its torsion locus $\fg$  
has positive dimension and is  
$G$-invariant, and  $G_{|\fg}$  is virtually abelian and consists of 
finite order elements. Moreover, $\fg$ is maximal among the germs of analytic variety $V'$ (and also among the formal varieties) 
such that  $V'$ is $G$-invariant and  $G_{|V'}$  consists of 
finite order elements. 
%
\end{thm}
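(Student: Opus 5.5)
The plan is to separate the statement into an algebraic part (virtual solvability) and a geometric part (the torsion locus), and to make the finite orbits property act through the Zariski closure of $G$ in the group of formal diffeomorphisms, jet by jet. I would argue in four steps.

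\textbf{Step 1: virtual solvability.} Fix $k$. The image $j^kG$ of $G$ in the algebraic group of $k$-jets is contained in its Zariski closure $\overline{j^kG}$, a linear algebraic group. I would use a Tits alternative type argument. If $G$ contained a free subgroup of rank two whose generators are close to the identity, ping-pong on a small transversal would produce an orbit accumulating at ${\bf 0}$ with infinitely many points in a compact set. That contradicts the finite orbits property. Concretely:
\begin{itemize}
\item finite orbits forbids any element $\phi$ with an attracting or repelling behaviour, since iterates $\phi^{m}(z)$ would give infinitely many points of a single orbit;
\item in particular every eigenvalue of $D_{\bf 0}\phi$ has modulus $1$, and the linear part group is virtually solvable;
\item the same bound lifts to the jets via unipotent extensions, because the kernel of jet truncation is unipotent, hence nilpotent.
\end{itemize}
Taking the stabilizing sequence of derived lengths in $k$, and using a uniform bound on the index of the identity component (noetherianity of the Zariski closures), gives that $G$ is virtually solvable.

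\textbf{Step 2: basic properties of $\fg$.} $G$-invariance follows from the description $\fg=\cap_{\phi\in G}\fix(\phi^{\mm(\phi)})$ together with conjugation invariance of the index: $\mm(\psi\phi\psi^{-1})=\mm(\phi)$, so $\psi(\fix(\phi^{\mm(\phi)}))=\fix((\psi\phi\psi^{-1})^{\mm})$. Maximality also comes from that description. If $V'$ is invariant and $G_{|V'}$ has finite order elements, then $V'\subset\fix(\phi^{\mm(\phi)})$ for every $\phi$ by Corollary \ref{cor:per_to_fix}, whose formal version is proved the same way. Hence $V'\subset \fg$.

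For the restriction, $G_{|\fg}$ consists of finite order elements. Its image under $\phi\mapsto\phi^{\mm(\phi)}$ acts trivially there, so the restriction to $\fg$ factors through a group whose elements have bounded order controlled by $\mm$. I would then argue that $G_{|\fg}$ embeds into the linear part group restricted to the tangent cone. That group is a solvable torsion linear group, and by Schur's theorem it is finite-by-abelian, in fact virtually abelian.

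\textbf{Step 3: positive dimension of $\fg$ (the main obstacle).} Write $G_u$ for the normal subgroup of elements tangent to the identity modulo the finite-index part, so that $\fg\supset\fix$ of a suitable finite-index subgroup. By virtual solvability I would reduce to $G$ solvable with unipotent Zariski closure in formal jets. That closure is then the exponential of a Lie algebra $\mathfrak{g}$ of nilpotent formal vector fields.

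The task is to show that the common zero set of $\mathfrak{g}$ has positive dimension. If it were $\{{\bf 0}\}$, I would take an element $X$ in the last nontrivial term of the derived series. It is a formal vector field commuting with the preceding term, and I would use that the fixed point set of $\exp X$ is then the singular set of $X$. If $\mathrm{Sing}(X)$ were isolated, the Camacho--Sad / Leau--Fatou flower type dynamics, obtained from a Poincar\'e--Dulac normal form, yield petals. In those petals orbits converge to ${\bf 0}$, giving infinite orbits in a compact set. This contradicts the finite orbits property.

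Iterating over the derived series, and using invariance of each $\mathrm{Sing}$ under the group, I would obtain a positive-dimensional common fixed curve. This is the Fixed Point Curve Theorem idea applied to the whole group. Convergence of the resulting germ follows because it is cut out by $\fix(\phi^{\mm(\phi)})$ for convergent $\phi$.

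The hardest point is excluding the case where each individual element has a positive-dimensional fixed set but these sets intersect only at ${\bf 0}$. Here I would first try the nilpotent structure: the sets $\fix$ for commuting elements form an invariant family. I would then restrict to a fixed curve of one element and induct on dimension.
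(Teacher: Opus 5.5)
The genuine gap is in Step 3, which is the heart of the theorem. The reduction to a solvable group with unipotent Zariski closure loses exactly the hard part.
\begin{enumerate}
\item Proving that the unipotent elements of (a finite index subgroup of) $G$ have a positive dimensional common fixed set gives no lower bound on $\fg$. The torsion locus is also cut out by the elements with $\mm(\phi)=1$ that are not unipotent, e.g.\ with eigenvalues $1$ and $e^{2\pi i\theta}$, $\theta\notin\mathbb{Q}$. Finite orbits germs having a single root of unity eigenvalue exist (Remark \ref{rem:just_1_eigen}).
\item For infinitely generated $G$, the fixed set of a finite index subgroup can be $\{{\bf 0}\}$ while $\fg$ is not. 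For $P$ in Remark \ref{rem:inf_simple} one has $\overline{P}=\overline{P}_0$, $\fix(P)=\{{\bf 0}\}$ and $\mathfrak{T}_{P}=(\C,{\bf 0})$.
\item Even in the unipotent case, your petal argument needs parabolic curves for germs with an isolated fixed point. This is known for tangent to the identity germs in dimension two (Abate), but not in general for $n\geq 3$. Your induction would also need it for restrictions to singular invariant subvarieties.
\item $\exp X$, with $X$ in the Lie algebra of $\overline{G}$, is only formal and need not lie in $G$, so it carries no dynamics.
\end{enumerate}

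The missing idea is the relative cyclic statement, Corollary \ref{cor:pos_inter}: if $\phi\in\diff{<\infty}{n}$ and $W$ is a $\phi$-invariant germ of positive dimension, then $\dim(\fix(\phi^{\mm(\phi)})\cap W)\geq 1$. The paper proves it topologically, without normal forms. A continuum of periodic points inside $W$ reaching $\partial B({\bf 0},\epsilon)$ (Proposition \ref{pro:exists_continuum}) is forced into $\overline{\mathrm{Per}}_{\phi,\epsilon,\mm(\phi)}$ (Proposition \ref{pro:cc_is_periodic}). The ingredients are the unit circle spectrum at periodic points (Remark \ref{rem:stable_manifold}), local constancy of the spectrum, period reduction (Corollary \ref{cor:per_to_fix}) and Sierpi\'nski's theorem.

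With this, the induction you sketch runs as in Proposition \ref{pro:main}. The subgroup $H=J\cap\overline{G}_0\cap G$ is solvable with connected Zariski closure. When $\ell\geq 2$, Lie--Kolchin makes $D_{\bf 0}H^{\ell-1}$ unipotent, so $\mathcal{A}_{H,W}\neq\emptyset$ (Lemma \ref{lem:exist_infinite}). Since $H^{\ell-1}_{|W}$ is abelian, intersecting $W$ with the fixed sets of the elements of $\mathcal{A}_{H,W}^{0}$ preserves invariance and positive dimension (Lemma \ref{lem:red_dim}).

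Step 1 does not work as written either. Spectra in $S^{1}$ do not make $D_{\bf 0}G$ virtually solvable; free subgroups of $\mathrm{SO}(3)$ are counterexamples. A virtually solvable linear part does not make $G$ virtually solvable either. The kernels of jet truncation have unbounded nilpotency class, so the derived lengths of the $G_k$ need not stabilize. Indeed $\langle (x,y+x^{2}),(x+y^{2},y)\rangle<\diff{}{2}$ is free and tangent to the identity.

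The finite orbits property has to be used dynamically. Proposition \ref{pro:virt_solv} invokes a Zassenhaus type result: for a non virtually solvable group with spectra in $S^{1}$, it provides finitely many generators whose pseudogroup contains non-trivial elements converging uniformly to the identity on an open set. Generic orbits are then recurrent, hence infinite. Your ping-pong sketch justifies neither the generators close to the identity nor the accumulation.

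Step 2 is essentially sound and partly differs from the paper. Invariance via $\mm(\psi\phi\psi^{-1})=\mm(\phi)$ is simpler than going through $G_{0}^{\sharp}$, and maximality is the paper's argument. For virtual abelianity, however, use the Zariski tangent space rather than the tangent cone. The map $(x,y)\mapsto(x,-y)$ is a non-trivial involution of the cusp $y^{2}=x^{3}$ that acts trivially on its tangent cone.

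With that correction the argument works. A finite order automorphism of $\Oc_{n}/I(\fg)$ acting trivially on $\mathfrak{m}/\mathfrak{m}^{2}$, where $\mathfrak{m}$ is its maximal ideal, is the identity. Hence $G_{|\fg}$ embeds in a torsion subgroup of $\mathrm{GL}(T_{\bf 0}\fg)$, which is virtually abelian by the theorems of Schur and Jordan; no solvability is needed. This is a valid and more elementary substitute for the exact sequence \eqref{equ:exact}. The orders are not bounded in general, though: they are $2^{k-1}$ for $P$.
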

Since $\mathcal{H}_{\mathcal{F},\mathcal{L}}$ is finitely generated, because $\pi_{1} (\mathcal{L}, {\bf z}_{0})$ is finitely generated, we can focus in 
finitely generated subgroups of $\diff{}{n}$.
\begin{thm}
\label{thm:main_g_fg}
Let $G$ be a finitely generated subgroup of $\diff{}{n}$ that has the finite orbits property. Then $\fg = \fix (\overline{G}_{0} \cap G)$ holds,
$\dim (\fg) \geq 1$,  
$G_{|\fg}$  is finite and a quotient of the finite group $G / (\overline{G}_{0} \cap G)$ (or equivalently $\overline{G} / \overline{G}_{0}$
or $\overline{D_{\bf 0} G} / \overline{D_{\bf 0} G}_{0}$). In particular, we have 
$\sharp G_{|\fg} \leq |G : \overline{G}_{0} \cap G| =  |\overline{G} : \overline{G}_{0}| =   |\overline{D_{\bf 0} G} : \overline{D_{\bf 0} G}_{0}|$.    
\end{thm}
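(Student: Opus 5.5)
The plan is to work with the Zariski closure $\overline{G}$ of $G$ inside the group of formal diffeomorphisms, seen as a pro-algebraic group through its finite jet truncations. Its identity component $\overline{G}_0$ has finite index in $\overline{G}$, so $\overline{G}_0 \cap G$ has finite index in $G$. The argument has three steps.

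\textbf{Step 1: the index identities.} The map $G \to \overline{G}/\overline{G}_0$ is surjective because $G$ is Zariski dense, and its kernel is $\overline{G}_0 \cap G$. This gives $|G : \overline{G}_0\cap G| = |\overline{G}:\overline{G}_0|$. For the identification with $\overline{D_{\bf 0} G}/\overline{D_{\bf 0} G}_0$, I will use the finite orbits property. Via Theorem \ref{thm:main_g}, $G$ is virtually solvable, and the finite orbits property should force the unipotent part of $\overline{G}$ to be connected: the kernel of $\overline{G}\to \overline{D_{\bf 0}G}$ consists of formal diffeomorphisms tangent to the identity, a pro-unipotent group, hence connected. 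A surjection of pro-algebraic groups with connected kernel induces an isomorphism on component groups.

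\textbf{Step 2: the equality $\fg = \fix(\overline{G}_0\cap G)$.} Set $H = \overline{G}_0 \cap G$.

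For $\fg \subset \fix(H)$: by Theorem \ref{thm:main_g}, $G_{|\fg}$ consists of finite order elements and is virtually abelian. Since $G$ is finitely generated, $G_{|\fg}$ is a finitely generated, virtually abelian torsion group, hence finite. So $G_{|\fg}$ is a finite quotient of $G$, and the restriction map $\rho: G\to G_{|\fg}$ has finite image. The kernel of $\rho$ (in the formal sense) is Zariski closed, because it is defined by the conditions $\phi\circ\gamma=\gamma$ for the parametrizations $\gamma$ of $\fg$. This kernel has finite index, so its closure contains $\overline{G}_0$, and therefore $H$ acts trivially on $\fg$.

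For $\fix(H) \subset \fg$: the set $\fix(H)$ is an analytic germ. It is $G$-invariant because $H$ is normal in $G$. On it, $G$ acts through the finite group $G/H$, so every element of $G_{|\fix(H)}$ has finite order. The maximality statement in Theorem \ref{thm:main_g} then gives $\fix(H)\subset \fg$.

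Combining the two inclusions, $G_{|\fg}$ is a quotient of $G/H$, which yields the bound $\sharp G_{|\fg} \leq |G : \overline{G}_{0} \cap G|$.

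\textbf{Step 3: $\dim\fg\geq 1$.} This follows directly from Theorem \ref{thm:main_g}.

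The main obstacle is Step 1: proving rigorously that $\overline{G}\to\overline{D_{\bf 0}G}$ induces an isomorphism on component groups. This requires that the kernel, formed by tangent-to-identity formal elements of $\overline{G}$, is connected. The first approach is to use that in characteristic zero any algebraic group of unipotent elements is connected, and to apply this to each jet truncation $J^k\overline{G}$ before passing to the projective limit. A secondary point is checking that the Zariski closure is compatible with taking finite-index subgroups at each jet level, so that $H$ is a finite-index subgroup of $G$.
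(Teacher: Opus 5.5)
Your proposal is correct and follows essentially the paper's route: Theorem \ref{thm:main_g} supplies $\dim \fg \geq 1$, the finiteness of $G_{|\fg}$ (virtually abelian, torsion, finitely generated) and the maximality of $\fg$, and your argument for $\fg \subset \fix(\overline{G}_0 \cap G)$ is exactly the one in Proposition \ref{pro:fgfg}: a finite-index kernel whose Zariski closure contains $\overline{G}_0$, combined with the Zariski-closedness of the pointwise-fixing group $\mathcal{F}_I$. There are two minor differences. For $\fix(\overline{G}_0\cap G)\subset \fg$, the paper argues directly that each $\phi\in G$ with $\mm(\phi)=1$ lies in $\overline{\langle \phi \rangle}=\overline{\langle \phi \rangle}_0 < \overline{G}_0$, whereas you use normality plus maximality, which is also valid. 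Also, the component-group identification you flag as the main obstacle is the general Remark \ref{rem:g0}. It holds for every subgroup of $\diffh{}{n}$, because the tangent-to-identity kernel is unipotent and hence connected at each jet level, so no appeal to finite orbits or virtual solvability is needed there.
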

Interestingly, the action of $G$ in $\fg$ is limited by its linearized $D_{\bf 0} G$ at the origin (cf. Definition \ref{def:linearized}). Indeed, 
$G_{|\fg}$ is a quotient of the group $\overline{D_{\bf 0} G} / {\overline{D_{\bf 0} G}}_{0}$ that depends only 
on the linear parts at the origin of the elements of $G$, since $\overline{D_{\bf 0} G}$ is the Zariski-closure of $D_{\bf 0} G$
in $\mathrm{GL} (\C^{n})$ and ${\overline{D_{\bf 0} G}}_{0}$ is its connected component of identity.
We remark that the group $\overline{G}$ is a subgroup of the group of formal diffeomorphisms $\diffh{}{n}$ (cf. Definition \ref{def:lin_phi}), 
the so called Zariski-closure of $G$ and $\overline{G}_0$ is its connected component of identity (cf. Definition \ref{def:Zariski-closure});
these concepts are discussed later on.  

The finite orbits property for $G < \diff{}{n}$ does not imply that $D_{\bf 0} G$ has finite orbits, or, in particular, that 
all eigenvalues of the linear parts of elements of $G$ at ${\bf 0}$ are roots of unity \cite[Theorem 1]{Lisboa-Ribon:fixed-point}. 
Nevertheless, the following immediate corollary 
of Theorem \ref{thm:main_g} shows that it has an influence on the spectrum of elements of $D_{\bf 0} G$.
\begin{cor}
\label{cor:main_eigen}
Let $G$ be a finite orbits  subgroup of $\diff{}{n}$. Then $\spec (D_{\bf 0} \phi)$ contains a root of unity for any $\phi \in G$.
\end{cor}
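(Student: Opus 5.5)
Corollary \ref{cor:main_eigen} follows by restricting $\phi$ to the torsion locus supplied by Theorem \ref{thm:main_g}. Fix $\phi \in G$ and write $\mathfrak{T} = \fg$. By Theorem \ref{thm:main_g}, $\mathfrak{T}$ is a $G$-invariant germ of analytic set of positive dimension, and $\phi_{|\mathfrak{T}}$ has finite order, say $m$. Then $\phi^{m}$ is the identity on $\mathfrak{T}$, so $\mathfrak{T} \subset \fix(\phi^{m})$. It remains to extract an eigenvalue of $D_{\bf 0}\phi$ that is a root of unity.

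Since $\dim \mathfrak{T} \geq 1$, the germ $\mathfrak{T}$ contains an irreducible curve germ $\gamma$ through ${\bf 0}$. Its tangent cone $C_{\bf 0}\gamma$ is a line $\ell \subset \C^{n}$. Because $\phi^{m}$ fixes $\gamma$ pointwise, its linear part fixes pointwise the tangent cone of $\gamma$. Concretely, take a Puiseux parametrization $t \mapsto \gamma(t) = v t^{k} + O(t^{k+1})$ with $v \neq 0$. From $\phi^{m}(\gamma(t)) = \gamma(t)$ and comparing lowest order terms, we get $D_{\bf 0}(\phi^{m}) v = v$. Thus $v$ is an eigenvector of $(D_{\bf 0}\phi)^{m}$ with eigenvalue $1$.

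Finally we pass from $(D_{\bf 0}\phi)^{m}$ to $D_{\bf 0}\phi$. Put $A = D_{\bf 0}\phi$. The eigenvalues of $A^{m}$ are the $m$-th powers of the eigenvalues of $A$, counted with multiplicity, as one sees from a Jordan form of $A$. So $1 \in \spec(A^{m})$ forces some $\lambda \in \spec(A)$ with $\lambda^{m}=1$, i.e.\ $\lambda$ is a root of unity.

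The only point needing care is the tangent-cone step: the set $\fix(\phi^{m})$ may be singular, so I would argue along a single irreducible curve inside it rather than with a tangent space. The leading-term comparison in the Puiseux parametrization handles this, since the higher order terms of $\phi^{m}$ contribute only $O(t^{2k})$. Nothing else in the argument goes beyond Theorem \ref{thm:main_g}.
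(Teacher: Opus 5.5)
Your argument is correct, but it takes a heavier route to the positive-dimensional fixed point set than the paper does. The paper does not use the group torsion locus $\fg$ here. By Remark \ref{rem:finite_group_indiv}, each $\phi \in G$ lies in $\diff{<\infty}{n}$. The cyclic-case Corollary \ref{cor:periodic_set}, which rests only on Propositions \ref{pro:exists_continuum} and \ref{pro:cc_is_periodic}, then gives $\dim \fix(\phi^{\mm(\phi)}) \geq 1$. Hence $1 \in \spec(D_{\bf 0}\phi^{\mm(\phi)})$, which is Corollary \ref{cor:eigen}, and the statement follows.

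Your use of Theorem \ref{thm:main_g} brings in machinery the corollary does not need: virtual solvability (Proposition \ref{pro:virt_solv}) and the dimension-reduction argument of Proposition \ref{pro:main}. That theorem is itself built on the same cyclic results, since Lemma \ref{lem:red_dim} invokes Corollary \ref{cor:pos_inter}.

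The paper's intermediate result is also more general. It applies to any single finite orbits germ with no group hypothesis, and it gives a root of unity whose order divides $\mm(\phi)$. You can get the same bound on your side, because $\fg \subset \fix(\phi^{\mm(\phi)})$ holds by the definition of $\fg$; you may simply take $m = \mm(\phi)$.

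In your favour, the Puiseux leading-term comparison actually proves the step ``$\dim \fix(\phi^{m}) \geq 1$ implies $1 \in \spec(D_{\bf 0}\phi^{m})$''. The paper only asserts this step. Your route is also exactly the one announced in the introduction, where the corollary is called an immediate consequence of Theorem \ref{thm:main_g}.
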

The result is sharp, since it is possible that the root of unity eigenvalue is unique \cite[Theorem 1]{Lisboa-Ribon:fixed-point} (Remark \ref{rem:just_1_eigen}).

Theorems \ref{thm:main_g} and \ref{thm:main_g_fg} generalize well-known results for the one-dimensional case ($n=1$) 
by Mattei and Moussu \cite{MaMo:Aen}. Indeed, Theorem \ref{thm:main_g}  implies that a finite orbits subgroup of $\diff{}{}$ 
satisfies $\fg = \cn{}$ and hence consists of finite order elements and is abelian (cf. Lemma \ref{lem:fin_orb_1}). 
If, moreover, $G$ is finitely generated then $G$ is a finite abelian group (cf. also Lemma \ref{lem:fin_orb_1_fg}).

Let us mention that Theorem \ref{thm:main_g_fg} for the cyclic case and $n=2$ is, essentially, the main theorem of \cite{Lisboa-Ribon:fixed-point}.
We present an example in section \ref{section:infinite} that shows that 
Theorem \ref{thm:main_g} cannot be strengthened to obtain that $G_{|\fg}$  is finite.  
Next, we consider some cases in which the properties of the linearized $D_{\bf 0} G$ of $G$ at the origin of
a finite orbits group force the torsion locus $\fg$ of $G$ to coincide with $\fix (G)$.

\begin{cor}
    \label{cor:main_g}
     Let $G$ be a subgroup of $\diff{}{n}$ that has the finite orbits property. Then, we obtain that $\fix (G)$ is equal to $\fg$ and of
     positive dimension if
     \begin{itemize}
     \item $G$ is finitely generated and  $\overline{G} =\overline{G}_0$ or
     \item $\overline{\langle \phi \rangle} =\overline{\langle \phi \rangle}_0$ for any $\phi \in G$ or
     \item $D_{\bf 0} \phi$ is unipotent   for any $\phi \in G$.
     \end{itemize}
\end{cor}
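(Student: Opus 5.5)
Each of the three cases reduces to Theorems \ref{thm:main_g} and \ref{thm:main_g_fg}, plus one extra ingredient: the index of embeddability equals one, or finite-order elements on the torsion locus are trivial. The inclusion $\fix (G) \subset \fg$ always holds, since $G_{|\fix (G)}$ is trivial and hence consists of finite order elements, and $\fg$ is maximal with this property by Theorem \ref{thm:main_g}. In every case it therefore suffices to prove $\fg \subset \fix (G)$; positivity of the dimension then comes from $\dim (\fg) \geq 1$ in Theorem \ref{thm:main_g}.

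\emph{First case.} $G$ is finitely generated with $\overline{G} = \overline{G}_0$. Theorem \ref{thm:main_g_fg} gives $\fg = \fix (\overline{G}_0 \cap G)$. Since $\overline{G}_0 = \overline{G} \supset G$, we get $\overline{G}_0 \cap G = G$, so $\fg = \fix (G)$ directly.

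\emph{Second case.} Here $\overline{\langle \phi \rangle} = \overline{\langle \phi \rangle}_0$ for every $\phi \in G$. I would use the description $\fg = \bigcap_{\phi \in G} \fix (\phi^{\mm (\phi)})$ from the Example (Corollary \ref{cor:per_to_fix}). The plan is to show that this hypothesis forces $\mm (\phi) = 1$. I expect the index of embeddability to be defined, or characterised, as the index $|\overline{\langle \phi \rangle} : \overline{\langle \phi \rangle}_0|$, or at least to be the minimal $m$ with $\phi^m \in \overline{\langle \phi \rangle}_0$; if so, $\mm(\phi)=1$ is immediate. Then $\fg = \bigcap_{\phi} \fix(\phi) = \fix (G)$.

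If the definition of $\mm$ turns out to be different, I would argue through the cyclic case instead. The group $\langle \phi \rangle$ is finitely generated and inherits the finite orbits property from $G$, since it is induced by the sub-pseudogroup generated by $\phi$. Theorem \ref{thm:main_g_fg} then gives $\mathfrak{T}_{\langle \phi \rangle} = \fix (\overline{\langle \phi \rangle}_0 \cap \langle \phi \rangle) = \fix (\phi)$. Since $\fg \subset \mathfrak{T}_{\langle \phi \rangle}$ (because $\fg$ has the finite-order property for $\phi$, by the maximality in Theorem \ref{thm:main_g} applied to $\langle \phi \rangle$), we get $\fg \subset \fix (\phi)$ for all $\phi$.

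\emph{Third case.} $D_{\bf 0}\phi$ is unipotent for every $\phi \in G$. I would reduce to the second case by showing $\overline{\langle \phi \rangle}$ is connected. Write the Jordan decomposition $\phi = \phi_s \circ \phi_u$ in $\diffh{}{n}$. The semisimple part $\phi_s$ is formally linearizable with spectrum that of $D_{\bf 0}\phi$, all of whose eigenvalues equal $1$, so $\phi_s = \mathrm{id}$ and $\phi$ is unipotent. Then $\phi = \exp (X)$ for a formal nilpotent vector field $X$. The Zariski closure $\overline{\langle \phi \rangle}$ is $\{\exp(tX)\}_{t \in \mathbb{C}}$ (or trivial), which is connected.

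The main obstacle is the middle step: identifying exactly how $\mm(\phi)$ relates to $\overline{\langle \phi \rangle}/\overline{\langle \phi \rangle}_0$ from the paper's definitions. The cyclic-subgroup fallback via Theorem \ref{thm:main_g_fg} covers this, provided the finite orbits property passes to subgroups, which should be immediate from Definition \ref{def:fin_orb_prop}.
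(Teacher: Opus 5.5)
Your proof is correct, and for the first two items it is the paper's argument. The first case is read off from $\fg = \fix(\overline{G}_0 \cap G)$ in Theorem \ref{thm:main_g_fg}. The second follows from $\mm(\phi)=1$, which the paper gets from Lemma \ref{lem:connected} together with Definition \ref{def:embed}. Your guessed characterization $\mm(\phi) = |\overline{\langle \phi \rangle} : \overline{\langle \phi \rangle}_0|$ is exactly Remark \ref{rem:cond_con1}, so the cyclic-subgroup fallback is not needed. It would in any case not need finite orbits, since Propositions \ref{pro:fg} and \ref{pro:fgfg} hold for arbitrary subgroups of $\diffh{}{n}$.

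The genuine difference is in the third item. The paper stays at the level of eigenvalues. If $\spec(D_{\bf 0}\phi)=\{1\}$, then every product $\prod_k \lambda_k^{m_k}$ equals $1$. Hence both sets in Condition \eqref{equ:cond_con} are all of $\zz^n$, and $\mm(\phi)=1$ immediately, which is precisely what the second case uses.

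You instead prove that $\overline{\langle \phi \rangle}$ is connected, via three steps:
\begin{itemize}
\item the formal Jordan decomposition;
\item the formal logarithm $\phi=\exp(X)$;
\item the identification $\overline{\langle \phi \rangle}=\{\exp(tX)\}_{t \in \C}$ as an image of the additive group.
\end{itemize}
That route is valid and explains conceptually why the closure is connected. However, it imports machinery the paper never develops: the Jordan--Chevalley decomposition in $\diffh{}{n}$, infinitesimal generators, and Zariski closures of unipotent one-parameter groups computed jet by jet. Lemma \ref{lem:connected} turns the same step into a one-line arithmetic check.
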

Theorem \ref{thm:main} is a direct consequence of Theorem \ref{thm:main_g_fg} since $\mathcal{H}_{\mathcal{F},\mathcal{L}}$ is finitely generated
and has finite orbits if $(\mathcal{F}, {\mathcal L})$ is a closed leaves foliation. 
Indeed, $V$ is the saturated of $\mathfrak{T}_{\mathcal{H}_{\mathcal{F},\mathcal{L}}}$ (cf. Definition \ref{def:saturated})
and has dimension $\dim (\mathcal{F}) + \dim (\mathfrak{T}_{\mathcal{H}_{\mathcal{F},\mathcal{L}}}) > \dim (\mathcal{F})$. 
Theorem \ref{thm:main_p} is derived from Theorem \ref{thm:main}, the globalization procedure borrows ideas by G\'{o}mez-Mont
\cite{Gomez-Mont:integral_compact}.

We introduce Zariski-closed subgroups of formal diffeomorphisms in section \ref{sec:pro-algebraic}.
Their properties will be used throughout the paper. The finite orbits property for subgroups of $\diff{}{n}$ is 
introduced in section \ref{sec:finite_orbits}.
We deal with finite orbits cyclic subgroups of $\diff{}{n}$ in section \ref{sec:cyclic}. There, we 
improve the main result of \cite{Lisboa-Ribon:fixed-point} to prepare the study of general finite 
orbits subgroups of $\diff{}{n}$ in section \ref{sec:general}, where we show Theorems \ref{thm:main_g} and \ref{thm:main_g_fg}.
We profit from the work in diffeomorphisms to show Theorems \ref{thm:main} and \ref{thm:main_p} on closed leaves foliations in 
section \ref{sec:closed_leaves}. 

 \section{Pro-algebraic groups}
\label{sec:pro-algebraic}
 We consider pro-algebraic (also called Zariski-closed) groups of formal diffeomorphisms since they are very useful to study invariance
 properties \cite[section 7.2.7.1]{Ribon:Zariski-closure} and in particular the torsion locus $\fg$.
 \subsection{Preliminaries}
 We introduce here some useful concepts and properties 
 for the sake of completeness.
 \begin{defi}
\label{not:basic}
We denote by $\Oc_{n} = \C \{z_1, \hdots, z_n\}$ (resp. $\Oh_{n} = \C [[z_1, \hdots, z_n]]$) the local rings of convergent (resp. formal) 
complex power series in $n$ variables, centered at ${\bf 0}$. 
We denote by $\mathfrak{m}_{n}$ and  $\hat{\mathfrak{m}}_{n}$ respectively their maximal ideals.
\end{defi}
\begin{defi}
\label{def:diff_n}
We denote by $\diff{}{n}$ the group of germs of complex analytic diffeomorphisms at ${\bf 0} \in \cc^{n}$.
It consists of germs of biholomorphisms $\phi: U \to V$, where $U, V$ are open subsets of $\cc^{n}$ containing ${\bf 0}$ and 
$\phi ({\bf 0}) = {\bf 0}$.
\end{defi}
\begin{rem}
As a consequence of the inverse function theorem, and through the Taylor series expansion at ${\bf 0}$, any $\phi \in \diff{}{n}$ is of the form 
\begin{equation}
    \label{equ:dif_taylor}
    \phi (z_1, \hdots, z_n) = \left( \sum_{{\bf j} \in \zz_{\geq 0}^{n}, \, |{\bf j}| \geq 1} a_{{\bf j}; 1} {\bf z}^{{\bf j}} , \hdots, 
    \sum_{{\bf j} \in \zz_{\geq 0}^{n}, \, |{\bf j}| \geq 1}a_{{\bf j}; n} {\bf z}^{{\bf j}} \right) \in \underbrace{ {\mathfrak m}_{n} \times \hdots \times {\mathfrak m}_{n} }_{n \ \text{times}}  
\end{equation}
and its linear part 
$( \sum_{|{\bf j}| = 1} a_{{\bf j}; 1} {\bf z}^{{\bf j}} , \hdots, 
\sum_{|{\bf j}| = 1}a_{{\bf j}; n} {\bf z}^{{\bf j}} )$ at ${\bf 0}$ is a linear isomorphism, where 
${\bf z} = (z_1, \hdots, z_n)$, ${\bf j} = (j_1, \hdots, j_n)$, $|{\bf j}| = j_1+ \hdots+ j_n$ and 
$ {\bf z}^{{\bf j}} = z_{1}^{j_1} \hdots z_{n}^{j_n}$.
\end{rem}
The previous remark can be used as a motivation to define the group of formal diffeomorphisms.
\begin{defi}
     \label{def:lin_phi}
     We denote by $\diffh{}{n}$ the group of formal diffeomorphisms in dimension $n$.
     It consists of expressions as \eqref{equ:dif_taylor} but where $\phi$ belongs to the cartesian product
     ${(\hat{\mathfrak m}_{n})}^{n}$ 
     and its linear part 
     $D_{\bf 0} \phi := ( \sum_{|{\bf j}| = 1} a_{{\bf j}; 1} {\bf z}^{{\bf j}} , \hdots,  \sum_{|{\bf j}| = 1}a_{{\bf j}; n} {\bf z}^{{\bf j}} )$ at ${\bf 0}$ 
     is a linear isomorphism.
     %
\end{defi}
\begin{rem}
Given $\phi \in \diffh{}{n}$ and $k \in \nn$, we can define its $k$-jet 
\[ (j^{k} \phi) (z_1, \hdots, z_n) = \left( \sum_{{\bf j} \in \zz_{\geq 0}^{n}, \, |{\bf j}| \leq k} a_{{\bf j}; 1} {\bf z}^{{\bf j}} , \hdots, 
\sum_{{\bf j} \in \zz_{\geq 0}^{n}, \, |{\bf j}| \leq k}a_{{\bf j}; n} {\bf z}^{{\bf j}} \right)  ; \]
it belongs to $\diff{}{n}$. Given $\phi, \eta \in \diffh{}{n}$, we can define $\phi \circ \eta \in \diffh{}{n}$ as the formal diffeomorphism
whose $k$-jet coincides with the $k$-jet of $(j^{k} \phi) \circ (j^{k} \eta)$ for any $k \in \nn$. Analogously, we can define
$\phi^{-1} \in \diffh{}{n}$ as the formal diffeomorphism whose $k$-jet coincides with the $k$-jet of ${(j^{k} \phi)}^{-1}$ for any
$k \in \nn$. 
The set $\diffh{}{n}$ is a group with this operation.
\end{rem}
\begin{defi}
    \label{def:linearized}
    Given $G < \diffh{}{n}$, we define its linearized $D_{\bf 0} G$ at ${\bf 0}$  as the linear group 
    $D_{\bf 0} G = \{ D_{\bf 0} \phi : \phi \in G \}$ (cf. Definition \ref{def:lin_phi}).
\end{defi}
 \begin{defi}
     \label{def:gk}
     Let $k \in \nn$. 
     Given $\phi \in \diffh{}{n}$, it defines a map $\phi_{k} \in \mathrm{GL} (\hat{\mathfrak m}_{n}/\hat{\mathfrak m}_{n}^{k+1})$
     such that $\phi_{k} (f + \hat{\mathfrak m}_{n}^{k+1}) = f \circ \phi + \hat{\mathfrak m}_{n}^{k+1}$
     for any $f \in \Oh_{n}$. Note that $\hat{\mathfrak m}_{n}/\hat{\mathfrak m}_{n}^{k+1}$ is a finite dimensional complex vector space for any 
     $k \in \nn$.
     Given a subgroup $G$ of $\diffh{}{n}$, we define
     \[ G_k = \{ \phi_k : \phi \in G \} . \]
     It is a group of automorphisms of the $\cc$-algebra $\hat{\mathfrak m}_{n}/\hat{\mathfrak m}_{n}^{k+1}$ 
    (cf.  \cite[section 7.2.1]{Ribon:Zariski-closure}).
    We define $\overline{G}_{k}$ as the Zariski-closure of $G_k$ in $\mathrm{GL} (\hat{\mathfrak m}_{n}/\hat{\mathfrak m}_{n}^{k+1})$.
 \end{defi}
 \begin{defi}
     \label{def:krull}
     The Krull topology of $\Oh_{n}$ is the topology defined by the neighborhood base ${\{\mathcal{B}_{f,k}\}}_{f \in \Oh_{n}, \, k \in \nn}$, where
     $\mathcal{B}_{f, k} = f +  \hat{\mathfrak m}_{n}^{k+1}$. 
     It induces a topology in the cartesian product $(\hat{\mathfrak m}_{n})^{n}$  and thus in $\diffh{}{n}$.
     Given $G < \diffh{}{n}$, we have that
     \[ \overline{G}^{\mathrm{krull}} = \{ \phi \in \diffh{}{n} : \phi_k \in G_k \, \forall k \in \nn \}.  \]
     is the closure of $G$ in the Krull topology.
 \end{defi}
 \begin{example}
     The Krull closure of $\diff{}{n}$ is equal to $\diffh{}{n}$.
 \end{example}
 \begin{rem}
     \label{rem:dual}
     The map $f \to f^{*}$ that sends each element to its dual is an anti-isomorphism of algebraic matrix groups 
     from $\mathrm{GL}(T_{\bf 0} \C^{n})$ to $\mathrm{GL}(T_{\bf 0}^{*} \C^{n})$.
     The image of $D_{\bf 0} G$ by such anti-isomorphism is the group $G_1$.
 \end{rem}
 \begin{rem}
     Since $G_k$ is a group of automorphisms of the $\cc$-algebra $\hat{\mathfrak m}_{n}/\hat{\mathfrak m}_{n}^{k+1}$, so is 
     $\overline{G}_k$ \cite[Remark 7.2.12]{Ribon:Zariski-closure}, i.e. we have
     \[ A(fg + \hat{\mathfrak m}_{n}^{k+1}) = A(f + \hat{\mathfrak m}_{n}^{k+1}) A (g + \hat{\mathfrak m}_{n}^{k+1}) \]
     for all $f, g \in \Oh_{n}$. In particular, any $A \in \overline{G}_k$ is of the form 
     $\phi_k$ for some $\phi \in \diffh{}{n}$ \cite[Lemma 2.1]{JR:solvable25}.
 \end{rem}
 \begin{defi}
     \label{def:Zariski-closure}
     Let $G$ be a subgroup of $\diffh{}{n}$. We define its Zariski-closure $\overline{G}$ as the inverse limit $\varprojlim \overline{G}_{k}$. 
     Equivalently, we have
     \[ \overline{G} = \{ \phi \in \diffh{}{n} : \phi_k \in \overline{G}_{k} \ \forall k \in \nn \} . \]
     We define its connected component of identity $\overline{G}_{0}$ as the group 
     \[ \overline{G}_{0} = \{ \phi \in \diffh{}{n} : \phi_k \in \overline{G}_{k,0} \ \forall k \in \nn \} , \]
     where $\overline{G}_{k,0}$ is the connected component of identity of $\overline{G}_{k}$.
 \end{defi}
 \begin{defi}
     We say that a subgroup $G$ of $\diffh{}{n}$ is pro-algebraic (or Zariski-closed) if $\overline{G} = G$.  
 \end{defi}
 \begin{rem}
     \label{rem:zar_imp_krull}
     Any pro-algebraic subgroup of $\diffh{}{n}$ is Krull-closed.
 \end{rem}
 \begin{rem} 
     Let $G < \diffh{}{n}$.
     Then $\overline{G}$ is the minimal 
     pro-algebraic group containing $G$ \cite[Remark 7.2.16]{Ribon:Zariski-closure}. 
 \end{rem}
 The connected component of identity of a pro-algebraic group has analogous properties to those of the connected component of identity
 of an algebraic matrix group.
 \begin{rem}
     \label{rem:g0}
     Let $G < \diffh{}{n}$.
     Then $\overline{G}_0$ is a finite index normal pro-algebraic subgroup of $\overline{G}$ that satisfies
     $\overline{G}_{0} = \{ \phi \in \overline{G} : \phi_1 \in \overline{G}_{1,0} \}$.
     \cite[Proposition 2.3 and Remark 2.9]{JR:solvable25}. 
     In particular, the projection map 
     \[
     \begin{array}{ccc}
      \overline{G}/\overline{G}_0 & \to & \overline{G}_{1}/ \overline{G}_{1,0} \\
      \phi + \overline{G}_0 & \mapsto & \phi_1 + \overline{G}_{1,0}
     \end{array}
     \]
     is an anti-isomorphism of groups and hence $|\overline{G} : \overline{G}_0| = |\overline{G}_{1}  : \overline{G}_{1,0}|$.
     Thus, the linear part map  
      \[
      \begin{array}{ccc}
      \overline{G}/\overline{G}_0 & \to & \overline{D_{\bf 0} G} / {\overline{D_{\bf 0} G}}_{0} \\
      \phi + \overline{G}_0 & \mapsto & D_{\bf 0} \phi  +  {\overline{D_{\bf 0} G}}_{0}
     \end{array}
     \]
     is an isomorphism of groups and  $|\overline{G} : \overline{G}_0| = |\overline{D_{\bf 0} G} : {\overline{D_{\bf 0} G}}_{0}|$ by
     Remark \ref{rem:dual}.
\end{rem}
\begin{rem}
     \label{rem:fin_ind}
     Let $G < \diffh{}{n}$. Any finite index subgroup of $\overline{G}$ contains $\overline{G}_0$ \cite[Lemma 2.1]{JR:finite}. 
     Moreover, if $H$ is a finite index normal subgroup of $G$ then $\overline{H}$ is a finite index normal subgroup of $\overline{G}$, 
     the natural map $G/H \to \overline{G}/\overline{H}$ is surjective and 
     $|\overline{G}: \overline{H}| \leq |G:H|$ \cite[Lemma 2.4]{JR:finite}. In particular $\overline{H}$ contains $\overline{G}_0$. 
 \end{rem}
 \begin{rem}
     \label{rem:conn_char}
     Since the natural map $G \to \overline{G}/\overline{G}_0$ is surjective, the homomorphism of groups
     $G/(G \cap \overline{G}_0) \to \overline{G}/\overline{G}_0$ is an isomorphism  and 
     $|G : \overline{G}_{0} \cap G| =  |\overline{G} : \overline{G}_{0}|$.   
 \end{rem}
 \begin{rem}
    \label{rem:conn_1}
    The connectedness of $\overline{G}$, i.e. whether or not $\overline{G}=\overline{G}_0$, depends just on
     the linear parts of the elements of $G$. Indeed, 
     it is equivalent to 
     $\overline{D_{\bf 0} G} = {\overline{D_{\bf 0} G}}_{0}$.
\end{rem}
 Next, we consider a particular case of this phenomenon.

 \begin{lemma}\cite[Remark 4.10]{LRSV:stable}
 	\label{lem:connected}
 Let $\phi \in \diffh{}{n}$. Then the condition
 \begin{equation}
 \label{equ:cond_con}
  \{ (m_1, \hdots, m_n) \in \zz^{n} : \prod_{k=1}^{n} \lambda_k^{m_k}  \in 
e^{2 \pi i \mathbb{Q}} \} = 
  \{ (m_1, \hdots, m_n) \in \zz^{n} : \prod_{k=1}^{n} \lambda_k^{m_k} =1 \} ,
 \end{equation}
 where
 $\mathrm{spec}(D_{\bf 0} \phi) = \{ \lambda_1, \hdots, \lambda_n\}$, is equivalent to 
 $\overline{\langle \phi \rangle} = \overline{\langle \phi \rangle}_{0}$.
 \end{lemma}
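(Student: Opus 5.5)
By Remark \ref{rem:conn_1}, the condition $\overline{\langle \phi \rangle} = \overline{\langle \phi \rangle}_{0}$ is equivalent to $\overline{\langle A \rangle} = \overline{\langle A \rangle}_{0}$, where $A = D_{\bf 0} \phi$. So the plan is to reduce the statement to linear algebra in $\mathrm{GL}(\C^{n})$ and then read off the component group of the Zariski closure of a cyclic group from its diagonal part.

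First I reduce to the diagonal case. Write the Jordan decomposition $A = A_s A_u$. The Zariski closure of $\langle A \rangle$ is the product $\overline{\langle A_s \rangle} \times \overline{\langle A_u \rangle}$. If $A_u \neq \mathrm{id}$, then $\overline{\langle A_u \rangle}$ is a one-dimensional unipotent group isomorphic to $\mathbb{G}_a$, which is connected. Hence the component group of $\overline{\langle A \rangle}$ coincides with that of $\overline{\langle A_s \rangle}$, and after conjugating I may assume $A = \mathrm{diag}(\lambda_1, \hdots, \lambda_n)$.

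Next I describe the closure of a diagonal cyclic group. Let $T = (\C^{*})^{n}$ be the diagonal torus. The Zariski closure $H$ of $\langle A \rangle$ in $T$ is the diagonalizable group cut out by the characters ${\bf z}^{{\bf m}}$ satisfying $\lambda^{{\bf m}} := \prod_{k} \lambda_k^{m_k} = 1$. That is, $H = \{ t \in T : t^{{\bf m}} = 1 \ \forall {\bf m} \in \Lambda \}$, where
\[ \Lambda = \{ {\bf m} \in \zz^{n} : \lambda^{{\bf m}} = 1 \}. \]
This holds because closed subgroups of a torus are intersections of kernels of characters. A character vanishing on $\langle A \rangle$ must vanish on its closure, and conversely the group defined by these characters is the closure: its character group is $\zz^{n}/\Lambda$, which embeds in $\C^{*}$ via ${\bf m} \mapsto \lambda^{{\bf m}}$, so the group generated by $A$ is dense. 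This density step is the only one needing care. I would argue it by duality between diagonalizable groups and finitely generated abelian groups: $X(H) = \zz^{n}/\Lambda$, and the restriction $X(H) \to \C^{*}$ is injective, so no nontrivial character of $H$ is trivial on $\langle A \rangle$.

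Finally I use that a diagonalizable group $H$ is connected exactly when $X(H)$ is torsion-free. Here $X(H) = \zz^{n}/\Lambda$ is torsion-free if and only if $\Lambda$ is saturated, that is, $k{\bf m} \in \Lambda$ with $k \neq 0$ implies ${\bf m} \in \Lambda$. Now $k {\bf m} \in \Lambda$ for some $k \geq 1$ means $\lambda^{{\bf m}}$ is a root of unity, that is, $\lambda^{{\bf m}} \in e^{2 \pi i \mathbb{Q}}$. So saturation of $\Lambda$ is exactly condition \eqref{equ:cond_con}, and this gives the equivalence.

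The main obstacle is the identification of the closure together with the torsion-free criterion. Both are standard facts about diagonalizable groups, and I would cite them rather than reprove them.
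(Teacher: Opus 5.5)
The paper does not prove this lemma; it imports it from \cite[Remark 4.10]{LRSV:stable}. Your argument is a correct, self-contained proof. The following steps are all standard:
\begin{itemize}
\item the reduction to $A=D_{\bf 0}\phi$ through Remark \ref{rem:conn_1};
\item the splitting $\overline{\langle A\rangle}\cong\overline{\langle A_s\rangle}\times\overline{\langle A_u\rangle}$, which follows from the structure theorem applied to the commutative group $\overline{\langle A\rangle}$ (it contains $A_s$ and $A_u$) together with continuity of the two projections;
\item the connectedness of $\overline{\langle A_u\rangle}$, which is trivial or $\mathbb{G}_a$;
\item the character theory of diagonalizable groups over $\mathbb{C}$.
\end{itemize}
Saturation of $\Lambda$ is literally condition \eqref{equ:cond_con}.

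Two small remarks. First, your density paragraph is unnecessary. Kernels of characters are closed, so the characters of $T$ trivial on $\overline{\langle A\rangle}$ are exactly those indexed by $\Lambda$. That already gives $\overline{\langle A\rangle}=\{t: t^{\bf m}=1 \ \forall\, {\bf m}\in\Lambda\}$. The fact you genuinely need afterwards is that the characters of $T$ trivial on this group form exactly $\Lambda$, so that its character group is $\mathbb{Z}^{n}/\Lambda$. This holds over $\mathbb{C}$ for every subgroup $\Lambda$ of $\mathbb{Z}^n$. Second, ${\bf m}\mapsto\lambda^{\bf m}$ identifies this character group with the subgroup of $\mathbb{C}^{*}$ generated by the eigenvalues. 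Condition \eqref{equ:cond_con} says precisely that this subgroup is torsion-free, which also shows the condition does not depend on listing eigenvalues with multiplicity.

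As for what each route buys: yours puts all the formal content into Remark \ref{rem:conn_1}, hence into the cited isomorphism $\overline{G}/\overline{G}_{0}\cong\overline{D_{\bf 0}G}/\overline{D_{\bf 0}G}_{0}$ of Remark \ref{rem:g0}. After that it uses only classical linear algebraic group theory. A proof carried out directly in \diffh{}{n} would instead use the formal Jordan decomposition $\phi=\phi_s\circ\phi_u$, with $\phi_u$ the time-one map of a nilpotent formal vector field and $\phi_s$ formally linearizable. That avoids the external input behind Remark \ref{rem:g0}, at the cost of formal normal-form theory.
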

 
 \begin{defi}
        \label{def:embed}
        Let  $\phi \in \diffh{}{n}$ with  
 	$\mathrm{spec}(D_{\bf 0} \phi) = \{ \lambda_1, \hdots, \lambda_n\}$. The minimum $m \in \nn$ such that 
	$\lambda_1^{m}, \hdots, \lambda_n^{m}$ satisfy Condition  \eqref{equ:cond_con} is called the {\it index of embeddability} of $\phi$.
	We denote it by $\mm (\phi)$.
 \end{defi}
 \begin{rem}
         \label{rem:cond_con1}
        \cite[Remark 4.10]{LRSV:stable}
        Given  $\phi \in \diffh{}{n}$, $\mm (\phi)$ is the first natural number $m \in \nn$ such that $\phi^{m} \in  \overline{\langle \phi \rangle}_{0}$ or equivalently
        $\mm (\phi) = | \overline{\langle \phi \rangle}:\overline{\langle \phi \rangle}_{0}|$.
 \end{rem}
 \begin{rem}
        \label{rem:cond_con}
 	Given  $\phi \in \diffh{}{n}$  and $r \in \nn$, we have 
	$\mm (\phi^{r}) = \mm (\phi)/ \gcd (r, \mm(\phi))$ and  in particular $\mm (\phi^{\mm (\phi)}) =1$. Moreover, we have
 	$\overline{\langle \phi^{m} \rangle}  = \overline{\langle \phi^{m r} \rangle} = \overline{\langle \phi^{r} \rangle}_{0} = \overline{\langle \phi \rangle}_{0}$
	for any $r \in \mathbb{N}$ \cite[Remark 4.10]{LRSV:stable}, where $m = \mm (\phi)$.
 \end{rem}
 \subsection{Invariance properties}

 Let us consider the group of diffeomorphisms that fix an ideal $I$ of $\Oh_{n}$, or equivalently preserve its zero set $V(I)$ when 
 $I$ is generated by elements of $\Oc_{n}$.
 \begin{pro} \cite[Proposition 4.17]{LRSV:stable}
     \label{pro:inv_closed}
     The group 
     \[ \mathcal{I}_{I}= \{ \phi \in \diffh{}{n} : I \circ \phi = I \} \] 
     is Zariski-closed for any ideal $I$   of  $\Oh_{n}$.  
 \end{pro}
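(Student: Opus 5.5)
The idea is to describe $\mathcal{I}_{I}$ jet by jet. At each finite level $k$, preserving $I$ will become preserving a finite-dimensional linear subspace, which is a Zariski-closed condition in $\mathrm{GL}$. I would then pass to the inverse limit using Krull's intersection theorem.

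First, the trivial case: if $I = \Oh_{n}$ then $\mathcal{I}_{I} = \diffh{}{n}$, which is clearly Zariski-closed, so I assume $I \subset \hat{\mathfrak m}_{n}$. For each $k \in \nn$ set
\[ W_k = (I + \hat{\mathfrak m}_{n}^{k+1})/\hat{\mathfrak m}_{n}^{k+1} \subset \hat{\mathfrak m}_{n}/\hat{\mathfrak m}_{n}^{k+1}, \]
a linear subspace of this finite-dimensional space, and let $H_k = \{ A \in \mathrm{GL}(\hat{\mathfrak m}_{n}/\hat{\mathfrak m}_{n}^{k+1}) : A(W_k) = W_k \}$. The stabilizer of a subspace is defined by polynomial equations in the matrix entries, so $H_k$ is an algebraic subgroup. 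Also, $A(W_k) \subset W_k$ already forces equality, because $A$ is injective and $W_k$ is finite dimensional.

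Since $f \mapsto f \circ \phi$ is a ring automorphism of $\Oh_{n}$ preserving $\hat{\mathfrak m}_{n}$, the set $\mathcal{I}_{I}$ is a group. The main claim is
\[ \mathcal{I}_{I} = \{ \phi \in \diffh{}{n} : \phi_k \in H_k \ \forall k \in \nn \}. \]
The inclusion $\subset$ is immediate, since $I \circ \phi = I$ gives $\phi_k(W_k) \subset W_k$. For $\supset$, suppose $\phi_k(W_k) \subset W_k$ for all $k$, so that $I \circ \phi \subset I + \hat{\mathfrak m}_{n}^{k+1}$ for every $k$. Krull's intersection theorem, applied to the Noetherian local ring $\Oh_{n}/I$, gives $\bigcap_k (I + \hat{\mathfrak m}_{n}^{k+1}) = I$, hence $I \circ \phi \subset I$.

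For the reverse inclusion, note that $I \circ \phi$ is again an ideal, since composition with $\phi$ is an automorphism. Because $\phi_k(W_k) = W_k$, we get $I \subset I\circ \phi + \hat{\mathfrak m}_{n}^{k+1}$ for all $k$. Applying Krull's theorem to $I \circ \phi$ then yields $I \subset I \circ \phi$, so $I \circ \phi = I$. I expect this step, moving from the finite-level conditions back to the formal ideal, to be the only real point. Krull's theorem is exactly what handles it, together with the observation that invertibility at each level gives equality rather than mere inclusion.

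Finally, write $G = \mathcal{I}_{I}$. The claim gives $G_k \subset H_k$, and since $H_k$ is Zariski-closed, $\overline{G}_k \subset H_k$. If $\phi \in \overline{G}$, then $\phi_k \in \overline{G}_k \subset H_k$ for all $k$ by Definition \ref{def:Zariski-closure}. The claim then gives $\phi \in G$, so $\overline{G} = G$ and $G$ is pro-algebraic.
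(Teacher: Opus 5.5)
Your proof is correct and takes the route the paper indicates. The paper cites the result and only sketches it: $\mathcal{I}_{I}$ is a group cut out by algebraic equations on the Taylor coefficients of $\phi$, and your jet-level stabilizer conditions $\phi_k \in H_k$ are exactly such equations. The step that sketch leaves implicit, that these finite-level conditions recover $I \circ \phi = I$, is correctly supplied by your use of Krull's intersection theorem. (Your second application of Krull could also be replaced by noting that the ascending chain $I \subset I \circ \phi^{-1} \subset I \circ \phi^{-2} \subset \cdots$ stabilizes in the Noetherian ring $\Oh_{n}$.)
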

 The proof is based in two properties: the invariance property defines a group $\mathcal{I}_{I}$ that is also
 the solution space of a system of algebraic equations on the coefficients of the Taylor series expansion of $\phi$. 
 The next result, concerning the group of diffeomorphisms whose fixed point set contains $V(I)$,  can proved completely analogously. 
 \begin{pro}
     \label{pro:fix_closed}
     The group 
     \[ \mathcal{F}_{I}= \{ \phi \in \diffh{}{n} : f \circ \phi - f  \in I \ \forall f \in \Oh_{n}  \} \] 
     is Zariski-closed for any ideal $I$   of  $\Oh_{n}$.  
 \end{pro}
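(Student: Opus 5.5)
We have to show that $\mathcal{F}_I$ is a subgroup of $\diffh{}{n}$ and that it equals its Zariski-closure. The plan is to mimic the proof of Proposition \ref{pro:inv_closed}: first prove the group property, then express membership in $\mathcal{F}_I$ as an algebraic condition on each truncation $\phi_k$. We may assume $I$ is a proper ideal. If $I = \Oh_n$, then $\mathcal{F}_I = \diffh{}{n}$, which is trivially Zariski-closed.

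\emph{Group property.} If $\phi, \eta \in \mathcal{F}_I$ and $f \in \Oh_n$, then
\[ f\circ(\phi\circ\eta) - f = \big((f\circ\phi) - f\big)\circ \eta + \big(f\circ \eta - f\big). \]
The second term lies in $I$. For the first term, write $g = f\circ\phi - f \in I$. We need $g \circ \eta \in I$, i.e. that $I\circ\eta \subset I$. This holds because for $h \in I$ we have $h\circ\eta = h + (h\circ \eta - h) \in I$, using $\eta \in \mathcal{F}_I$ applied to $f = h$. Hence $I \circ \eta \subset I$. Applying the same to $\eta^{-1}$ requires first knowing $\eta^{-1} \in \mathcal{F}_I$. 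For this, use the Krull topology: $\eta$ induces an automorphism of $\Oh_n$, and $I\circ\eta \subset I$ for the closed ideal $I$ gives equality $I\circ \eta = I$. To see this, note that on each $\Oh_n/\hat{\mathfrak m}_n^{k+1}$ the map $\eta$ induces an injective linear endomorphism sending $(I+\hat{\mathfrak m}_n^{k+1})/\hat{\mathfrak m}_n^{k+1}$ into itself; by finite dimension it sends this subspace onto itself, and then Krull's intersection theorem gives $I = \bigcap_k (I + \hat{\mathfrak m}_n^{k+1})$. From $I\circ\eta^{-1} = I$ and $f\circ\eta^{-1} - f = -(f\circ \eta - f)\circ \eta^{-1}$ we get $\eta^{-1} \in \mathcal{F}_I$. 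So $\mathcal{F}_I$ is a group, contained in $\mathcal{I}_I$.

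\emph{Zariski-closedness.} Set $I_k = (I + \hat{\mathfrak m}_n^{k+1})/\hat{\mathfrak m}_n^{k+1}$, a subspace of the finite-dimensional space $\Oh_n/\hat{\mathfrak m}_n^{k+1}$. Working with the action on $\hat{\mathfrak m}_n/\hat{\mathfrak m}_n^{k+1}$ suffices, since constants are fixed. Define
\[ A_k = \{ A \in \mathrm{GL}(\hat{\mathfrak m}_{n}/\hat{\mathfrak m}_{n}^{k+1}) : A(v) - v \in I_k \ \forall v \}. \]
This is the preimage under a linear map of the condition $(A - \mathrm{id})(V) \subset I_k$, a system of linear equations in the matrix entries of $A$. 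Hence $A_k$ is Zariski-closed, and it is a group (indeed $(\mathcal{F}_I)_k \subset A_k$). Therefore $\overline{(\mathcal{F}_I)}_k \subset A_k$ for all $k$.

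Now let $\phi \in \overline{\mathcal{F}_I}$. Then $\phi_k \in A_k$ for every $k$, which means $f\circ\phi - f \in I + \hat{\mathfrak m}_n^{k+1}$ for all $k$ and all $f$. By Krull's intersection theorem, $\bigcap_k (I+\hat{\mathfrak m}_n^{k+1}) = I$, so $f\circ\phi - f \in I$ and $\phi \in \mathcal{F}_I$. This gives $\overline{\mathcal{F}_I} = \mathcal{F}_I$.

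The main point requiring care is the use of Krull's theorem: the ideal $I$ is closed in the Krull topology of the Noetherian local ring $\Oh_n$. Everything else is linear algebra.
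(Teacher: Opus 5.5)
Your proof is correct and follows the route the paper indicates by analogy with Proposition \ref{pro:inv_closed}: show $\mathcal{F}_I$ is a group, cut out membership by algebraic (here linear) conditions on each jet $\phi_k$, and use Krull's intersection theorem $\bigcap_k (I+\hat{\mathfrak m}_n^{k+1}) = I$ to turn those jet conditions back into $f\circ\phi - f \in I$. One simplification: closure under inverses needs no finite-dimensional or Krull argument, since applying the defining condition of $\eta \in \mathcal{F}_I$ to $g = f\circ\eta^{-1}$ gives $f - f\circ\eta^{-1} \in I$ directly.
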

Next result makes clear how the periods for certain periodic actions associated to $\phi \in \diffh{}{n}$ are bounded by 
the index of embeddability.
\begin{cor}
     \label{cor:per_to_fix}
     Let $I$ be an ideal  of $\Oh_{n}$. Consider $\phi \in \diffh{}{n}$  
     and $r \in \mathbb{N}$ such that 
     $\phi^{r} \in \mathcal{I}_{I}$ (resp. $\mathcal{F}_{I}$). 
     Then  $\phi^{\mm (\phi)} \in \mathcal{I}_{I}$ (resp. $\mathcal{F}_{I}$).
 \end{cor}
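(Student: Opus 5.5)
Set $H = \mathcal{I}_{I}$ (resp. $\mathcal{F}_{I}$) and $m = \mm (\phi)$. The plan is to play Zariski-closedness of $H$ (Propositions \ref{pro:inv_closed} and \ref{pro:fix_closed}) against the description of $\overline{\langle \phi \rangle}_0$ in Remark \ref{rem:cond_con}. The goal is to show that $\overline{\langle \phi \rangle}_0 \subset H$. Since $\phi^{m} \in \overline{\langle \phi \rangle}_0$ by Remark \ref{rem:cond_con1}, this gives the claim.

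First I check that $H$ is a subgroup of $\diffh{}{n}$. For $\mathcal{I}_I$ this is immediate. For $\mathcal{F}_I$, take $\phi, \eta \in \mathcal{F}_I$ and write
\[ f\circ\phi\circ\eta - f = (f\circ\phi - f)\circ \eta + (f\circ\eta - f) . \]
The key point is that $I\circ\eta \subset I$ whenever $\eta \in \mathcal{F}_I$: for $g \in I$ we have $g \circ \eta = g + (g\circ\eta - g) \in I$. Hence both summands lie in $I$. Inverses are handled similarly, using $I \circ \eta^{-1} = I$, which holds because $I \circ \eta \subset I$ and $\mathcal{F}_I$ is stable under taking inverses once closure under composition is known. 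Alternatively, $\mathcal{F}_I$ is by definition the kernel of the action on $\Oh_n / I$ of the group $\mathcal{I}_I$, and it is contained in $\mathcal{I}_I$ by the previous remark applied to $\eta$ and to $\eta^{-1}$ formally.

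Next, since $\phi^{r} \in H$ and $H$ is a group, we get $\langle \phi^{r} \rangle \subset H$. Because $H$ is Zariski-closed and $\overline{\langle \phi^{r} \rangle}$ is the minimal pro-algebraic group containing $\langle \phi^r \rangle$, it follows that $\overline{\langle \phi^{r} \rangle} \subset H$. By Remark \ref{rem:cond_con}, $\overline{\langle \phi^{r} \rangle}_{0} = \overline{\langle \phi \rangle}_{0}$, so $\overline{\langle \phi \rangle}_0 \subset \overline{\langle \phi^r\rangle} \subset H$. Finally, Remark \ref{rem:cond_con1} says that $\phi^{m} \in \overline{\langle \phi \rangle}_{0}$; equivalently, by Remark \ref{rem:cond_con}, $\overline{\langle \phi^m\rangle} = \overline{\langle\phi\rangle}_0$. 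Hence $\phi^{m} \in H$.

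The only step needing real care is the group property of $\mathcal{F}_I$, specifically inverses. If the direct argument above is awkward, I would instead observe that $\overline{\langle \phi^r\rangle}_0$ is generated, as a pro-algebraic group, by elements obtained as limits of polynomial expressions in powers of $\phi^r$. It is cleaner, though, to note that Proposition \ref{pro:fix_closed} already asserts that $\mathcal{F}_I$ is a Zariski-closed \emph{group}, which includes the group property. With that, the argument is a two-line consequence of minimality of the Zariski closure.
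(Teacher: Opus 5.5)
Your proof is correct and is essentially the paper's argument: both use that $H$ is a Zariski-closed group containing a power of $\phi$, hence containing that power's Zariski closure, and then apply Remark \ref{rem:cond_con}. The paper writes this as $\phi^{m}\in\overline{\langle\phi^{m}\rangle}=\overline{\langle\phi^{mr}\rangle}<H$, while you go through $\overline{\langle\phi^{r}\rangle}_0=\overline{\langle\phi\rangle}_0\ni\phi^{m}$, which comes from the same remark. Your detour on the group property of $\mathcal{F}_I$ is unnecessary, since Proposition \ref{pro:fix_closed} already provides it; moreover, your inverse step as written is circular. The direct fix is to apply the defining condition to $f\circ\eta^{-1}$, giving $f\circ\eta^{-1}-f=-\bigl((f\circ\eta^{-1})\circ\eta-f\circ\eta^{-1}\bigr)\in I$.
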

 \begin{proof}
 Let us show the result for $\mathcal{I}_{I}$, the proof for $\mathcal{F}_{I}$ is the same. We obtain
 $\overline{\langle \phi^{m} \rangle} = \overline{\langle \phi^{m r} \rangle}$ by Remark \ref{rem:cond_con},
 where $m = \mm (\phi)$. It follows that 
 \[ \phi^{m} \in \overline{\langle \phi^{m} \rangle} = \overline{\langle \phi^{m r} \rangle} < \mathcal{I}_{I} \]
 since $\phi^{m r} \in \mathcal{I}_{I}$ and the group $\mathcal{I}_{I}$ is Zariski-closed (Proposition \ref{pro:inv_closed}).
 \end{proof}

 The Zariski-closure preserves some distinguished group classes. 
 \begin{defi}
    Let $G$ be a group. Given a property $\mathcal{P}$, we say that $G$ is {\it virtually} $\mathcal{P}$ if there exists a 
    finite index subgroup of $G$ that satisfies property $\mathcal{P}$.
\end{defi} 
 \begin{rem}\cite[Lemmas 2.5 and 2.6]{JR:finite}
     \label{rem:closure_structure}
     Let $G$ be a subgroup of $\diffh{}{n}$ such that $G$ is (virtually) abelian. Then $\overline{G}$ is  (virtually) abelian
     and $\overline{G}_0$ is abelian. The analogues obtained by replacing abelian with nilpotent or solvable also hold.
 \end{rem}

\section{The finite orbits property}
\label{sec:finite_orbits}
In this section we define the finite orbits property, first for germs of complex analytic diffeomorphisms and then for subgroups of 
$\diff{}{n}$. 
 
 \begin{defi}
 Let $\phi \in \diff{}{n}$. Consider $\epsilon \in \mathbb{R}^{+}$ such that there are representatives 
 of $\phi$ and $\phi^{-1}$, that we call also $\phi$ and $\phi^{-1}$ respectively for simplicity, defined in 
 a neighborhood of the closed ball $\overline{B} ({\bf 0}, \epsilon)$ centered at ${\bf 0}$. Let us define the positive orbit 
 $\mathcal{O}_{\phi, \epsilon}^{+}({\bf z})$ of ${\bf z} \in \overline{B} ({\bf 0}, \epsilon)$ by $\phi$ in $\overline{B} ({\bf 0}, \epsilon)$.
 We define $\varphi ({\bf z}) = \phi ({\bf z})$ if ${\bf z} \in \overline{B} ({\bf 0}, \epsilon)$ and  $\phi ({\bf z}) \in \overline{B} ({\bf 0}, \epsilon)$, 
 $\varphi ({\bf z}) = \infty$ if ${\bf z} \in \overline{B} ({\bf 0}, \epsilon)$ and  $\phi ({\bf z}) \not \in \overline{B} ({\bf 0}, \epsilon)$ and 
 $\varphi (\infty) = \infty$. Given ${\bf z} \in \overline{B} ({\bf 0}, \epsilon)$, we define 
 \[ \mathcal{I}_{\phi, \epsilon}^{+}({\bf z}) = \{0\} \cup \{ k \in \nn : \varphi^{k} ({\bf z}) \neq \infty \} . \]
 Now, we define the positive orbit
 \[ \mathcal{O}_{\phi, \epsilon}^{+}({\bf z}) = \{ \phi^{k} ({\bf z}) : k \in \mathcal{I}_{\phi, \epsilon}^{+}({\bf z}) \} . \]
 We also define $\mathcal{I}_{\phi, \epsilon}^{-}({\bf z}) = \mathcal{I}_{\phi^{-1}, \epsilon}^{+}({\bf z})$, 
 $\mathcal{I}_{\phi, \epsilon}({\bf z}) = \mathcal{I}_{\phi, \epsilon}^{+}({\bf z}) \cup \mathcal{I}_{\phi, \epsilon}^{-}({\bf z})$, 
 the negative orbit $\mathcal{O}_{\phi, \epsilon}^{-}({\bf z})  = \mathcal{O}_{\phi^{-1}, \epsilon}^{+}({\bf z})$ and the orbit 
 $\mathcal{O}_{\phi, \epsilon}({\bf z})  = \mathcal{O}_{\phi, \epsilon}^{+}({\bf z}) \cup \mathcal{O}_{\phi, \epsilon}^{-}({\bf z})$ 
 in $\overline{B} ({\bf 0}, \epsilon)$.
 \end{defi} 
 \begin{defi}
 We say that $\phi \in \diff{}{n}$ has {\it finite orbits} in $\overline{B} ({\bf 0}, \epsilon)$ if  
 $\mathcal{O}_{\phi, \epsilon}({\bf z})$ is a finite set for any ${\bf z} \in \overline{B} ({\bf 0}, \epsilon)$.
 We say that that $\phi \in \diff{}{n}$ has {\it finite orbits}, and we write $\phi \in \diff{< \infty}{n}$, if 
 $\phi \in \diff{}{n}$ has {\it finite orbits} in $\overline{B} ({\bf 0}, \epsilon)$ for some $\epsilon \in \mathbb{R}^{+}$, 
 and hence for any $\epsilon>0$ small enough.
 \end{defi}
 \begin{rem}
 \label{rem:type_finite}
 There are two types of finite orbits, namely such that $\sharp \mathcal{I}_{\phi, \epsilon}({\bf z}) < \infty$
 or $\mathcal{I}_{\phi, \epsilon}({\bf z}) =\zz$. In the latter case, there exists $k \in \zz \setminus \{0\}$ such that 
 $\phi^{k} ({\bf z}) = {\bf z}$, i.e. the orbit is periodic.
 \end{rem}
 \begin{example}
 \label{exa:fo_n}
 Every finite order element of $\diff{}{n}$ has finite orbits but the reciprocal does not hold true. 
 Indeed, let $\phi (z_1, z_2) = (z_1, z_1 + z_2) \in \diff{}{2}$. It has finite orbits since 
 $\mathcal{O}_{\phi, \epsilon}({\bf z})$ is finite for all $\epsilon \in \mathbb{R}^{+}$ and ${\bf z} \in \overline{B}(0,\epsilon)$.
 Nevertheless, it does not have finite order. 
 \end{example}
 \begin{rem}
    \label{rem:stable_manifold}
     Let $\phi \in \diff{}{n}$ that has finite orbits in $\overline{B} ({\bf 0}, \epsilon)$.
     Given  ${\bf z} \in B({\bf 0}, \epsilon)$ and  $k \in \zz$ with 
     $\mathcal{I}_{\phi, \epsilon}({\bf z})  = \zz$ and $\phi^{k} ({\bf z}) = {\bf z}$, we have 
     $\spec (D_{\bf z} \phi^{k}) \subset S^{1}$ 
     as a consequence of the stable manifold theorem
     (cf. \cite[p. 26]{Ruelle} \cite[p. 107]{Ilya-Yako}). Indeed, suppose that there exists an eigenvalue of modulus less than $1$ (if there is an
     eigenvalue of modulus greater than $1$, we consider $\phi^{-k}$). Then there exists a stable manifold $W_{\bf z}^{-}$ that is tangent to 
     \[ L_{\bf z}^{-} = \oplus_{\lambda \in \spec (D_{\bf z} \phi^{k}), \ |\lambda|<1} \ker (D_{\bf z} \phi^{k} - \lambda \mathrm{id})^{n} \]  
     at ${\bf z}$ and where all orbits are attracted to ${\bf z}$ by $\phi^{k}$. Since $W_{\bf z}^{-}$ can be constructed in any neighborhood of ${\bf z}$, 
     it follows that $\phi$ does not have finite orbits in  $\overline{B} ({\bf 0}, \epsilon)$.
 \end{rem}
 In order to define subgroups of $\diff{}{n}$ with the finite orbits property, we need to consider pseudogroups of 
 complex analytic diffeomorphisms.  We introduce such a  concept for the sake of completeness.
 \begin{defi}
     \label{def:pseudogroup}
     We say that a set $\mathcal{G}$ of holomorphic diffeomorphisms $\phi: U_{\phi} \to V_{\phi}$, where $U_{\phi}, V_{\phi}$ are open subsets 
     of a complex manifold $M$, is a pseudogroup if 
     \begin{itemize}
     \item the identity map $\mathrm{id}_{V}: V \to V$ belongs to $\mathcal{G}$ for any open subset $V$ of $M$;
     \item given $\phi: U \to V$ in $\mathcal{G}$, its inverse $\phi^{-1}:V \to U$ belongs to $\mathcal{G}$;
     \item given $\phi: U \to V$ and $\psi: V \to W$ in $\mathcal{G}$, its composition $\psi \circ \phi: U \to W$ belongs to $\mathcal{G}$;
     \item given $\phi: U \to V$ in $\mathcal{G}$, its restriction $\phi: U' \to \phi (U')$ belongs to $\mathcal{G}$ for any open subset $U'$ of $U$;
     \item given a diffeomorphism $\phi: U \to V$ such that there exists an open covering $U = \cup_{j \in J} U_j$ and 
     a family  $\{ \phi_{j}: U_j \to V_{j} : j \in J\}$ in $\mathcal{G}$ such that $\phi_{|U_j} \equiv \phi_j$ for any $j \in J$, 
     the diffeomorphism $\phi$ belongs to $\mathcal{G}$.
  \end{itemize} 
 \end{defi}
 \begin{defi}
     \label{def:germ}
     Consider a pseudogroup $\mathcal{G}$ of holomorphic diffeomorphisms defined in a complex manifold $M$.
     Given ${\bf z} \in M$, let $\mathcal{G}_{\bf z}$ be the subgroup of $\mathrm{Diff} (M, {\bf z})$ consisting of the germs
     at ${\bf z}$ of the elements $\phi: U \to V$ of $\mathcal{G}$ such that $\phi ({\bf z})= {\bf z}$.
 \end{defi}
 \begin{defi}
     \label{def:fin_orb_prop}
     Let $G$ be a subgroup of $\diff{}{n}$. 
     We say that $G$ has the {\it finite orbits} property 
     if there exists a pseudogroup $\mathcal{G}$ of holomorphic diffeomorphisms in $\cc^{n}$
     such that each orbit of $\mathcal{G}$ is a finite set and $G \subset \mathcal{G}_{\bf 0}$.
     %
 \end{defi}
  \begin{rem}
     Let $G = \langle \phi_1, \hdots, \phi_k \rangle < \diff{}{n}$ be finitely generated. 
     The finite orbits property for $G$ is equivalent to the existence of representatives
     $\phi_{j}: U_j \to V_j$, for $1 \leq j \leq k$, such that the pseudogroup generated by 
     $\{\phi_j: U_j \to V_j : 1 \leq j \leq k\}$ has finite orbits.
 \end{rem}
 \begin{rem}
      \label{rem:finite_group_indiv}
       Let $\phi  \in \diff{}{n}$. 
       The finite orbits property for $\langle \phi \rangle$ is equivalent to $\phi \in \diff{< \infty}{n}$.
       Indeed, if $\phi$ has finite orbits in $\overline{B} ({\bf 0}, \epsilon)$
       then the pseudogroup generated by any representative $\varphi: U \to V$ of $\phi$ such that 
       $U \cup V \subset B ({\bf 0}, \epsilon)$ has finite orbits. Reciprocally, if the pseudogroup generated by a representative 
       $\varphi: U \to V$ of $\phi$ has finite orbits then $\phi$ has finite orbits in $\overline{B} ({\bf 0}, \epsilon)$
       for $\overline{B} ({\bf 0}, \epsilon) \subset U \cap V$.

       Therefore, $\langle \phi \rangle$ has the finite orbits property if and only if $\langle \phi \rangle \subset \diff{< \infty}{n}$.
       In particular, a finite orbits subgroup $G$ of  $\diff{}{n}$ is contained in $ \diff{< \infty}{}$.
 \end{rem}
 \begin{example}
     Consider the subgroup $G$ of $\mathrm{GL}(1, \cc)$ of finite order elements. It can be considered as a subgroup of 
     $\diff{}{}$. 
     Clearly $G$ is contained in $\diff{< \infty}{}$ but it does not satisfy the finite orbits property.
     Anyway, any finitely generated subgroup of $G$ satisfies the finite orbits property, indeed it is a finite group. 
 \end{example}
 The case of finitely generated finite orbits subgroups of $\diff{}{n}$ is particularly simple for $n=1$, in contrast with the case
 $n \geq 2$ (Example \ref{exa:fo_n}).
  \begin{lemma}
     \label{lem:fin_orb_1}
     Let $G$ be a subgroup of $\diff{}{}$ contained in $\diff{< \infty}{}$.   
     Then $G$ consists of  finite order elements and is abelian. 
 \end{lemma}
 \begin{proof}
     A special feature of the one-dimensional theory is that  $\phi \in \diff{< \infty}{}$ if and only if its order is finite \cite[Th\'{e}or\`{e}me 2]{MaMo:Aen}. 
     Moreover, the commutators of elements of $\diff{}{}$ are tangent to the identity.
     Since finite order tangent to the identity diffeomorphisms are trivial
     (for instance because $\phi \in \overline{\langle \phi \rangle} = \overline{\langle \phi^{m} \rangle} = \{  \mathrm{id} \}$ 
     by Remark \ref{rem:cond_con}, where 
     $m = \mm (\phi)$), in particular $G$ is abelian.   
 \end{proof}
 \begin{lemma}
     \label{lem:fin_orb_1_fg}
     Let $G$ be a finitely generated subgroup of $\diff{}{}$. 
     Then $G$ has the finite orbits property if and only $G \subset \diff{< \infty}{n}$. 
     In such a case $G$ is finite and abelian. 
 \end{lemma}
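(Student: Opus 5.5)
Both directions of the equivalence will be handled separately, and then finiteness will follow from Lemma \ref{lem:fin_orb_1} together with a Zariski-closure argument.

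\emph{The equivalence.} The forward implication is already recorded in Remark \ref{rem:finite_group_indiv}: if $G$ has the finite orbits property, then every cyclic subgroup $\langle \phi \rangle$, $\phi \in G$, has it too, since it sits inside the same pseudogroup. Hence $G \subset \diff{< \infty}{}$.

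For the converse, assume $G = \langle \phi_1, \hdots, \phi_k \rangle \subset \diff{< \infty}{}$.
\begin{enumerate}
\item By Lemma \ref{lem:fin_orb_1}, $G$ is abelian and every element of $G$ has finite order.
\item A finitely generated abelian group all of whose elements have finite order is finite. So $G = \{\psi_1, \hdots, \psi_N\}$ is a finite group of germs.
\item Choose representatives of all $\psi_j$ on a common disc $D$. Shrinking $D$, the finitely many relations $\psi_i \circ \psi_j = \psi_{l(i,j)}$ hold on $D$.
\item Set $U = \cap_{j} \psi_j (D)$. This is an open neighborhood of $0$, and it is invariant by every $\psi_j$, because $\psi_j \circ \psi_i = \psi_{l(j,i)}$ and $\{\psi_{l(j,i)}\}_i$ runs over all of $G$.
\item The restrictions $\psi_j : U \to U$ therefore form an honest finite group of biholomorphisms of $U$. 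The pseudogroup they generate consists of restrictions of these maps to open subsets, together with gluings of such restrictions. Its orbits are contained in the $G$-orbits in $U$, which have at most $N$ points. Hence $G$ has the finite orbits property.
\end{enumerate}

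\emph{Finiteness.} This was already obtained in steps (1)--(2) above. Alternatively, one can argue with the Zariski closure. By Remark \ref{rem:closure_structure}, $\overline{G}$ is abelian. For each generator, $\phi_j^{\mm (\phi_j)} \in \overline{\langle \phi_j \rangle}_0 = \overline{\langle \phi_j^{r} \rangle}$ by Remark \ref{rem:cond_con}. Taking $r$ equal to the order of $\phi_j$ gives $\phi_j^{\mm(\phi_j)} = \mathrm{id}$. The orders are thus bounded, and an abelian finitely generated group with bounded-order generators is finite.

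The main delicate point is step (4): the invariant neighborhood must be defined by an actual identity of representatives on a fixed domain, not merely of germs. This is why the relations are imposed on a sufficiently small common disc before intersecting the images.
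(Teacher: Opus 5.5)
Your proof is correct and follows the paper's route: Lemma \ref{lem:fin_orb_1} gives an abelian group of finite-order elements, finite generation then makes $G$ finite, and a finite group of germs has the finite orbits property. Your steps (3)--(5) just spell out the invariant-neighborhood construction that the paper dismisses with ``clearly''; the Zariski-closure aside is superfluous and slightly misquotes Remark \ref{rem:cond_con}, which gives $\overline{\langle \phi \rangle}_0 = \overline{\langle \phi^{\mm(\phi) r} \rangle}$ rather than $\overline{\langle \phi^{r} \rangle}$, but this does not affect the main argument.
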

 \begin{proof}
     Let us show the non-trivial implication. 
     Assume that $G \subset \diff{< \infty}{n}$.
     Then $G$ is a finitely generated abelian group of finite order elements by Lemma \ref{lem:fin_orb_1} and hence finite. 
     Clearly, it satisfies the finite orbits property.
 \end{proof}

\section{Finite orbits cyclic groups}  
\label{sec:cyclic}
Our goal is proving that the torsion locus of a finite orbits subgroup of $\diff{}{n}$ is non-trivial 
(Proposition \ref{pro:cc_is_periodic}) and Corollary \ref{cor:periodic_set}).  
This section considers the case of cyclic groups, the general case will be treated in next section. 

Our approach is topological.
We profit from the finite orbits property to build a non-trivial continuum $\mathcal{K}$ of periodic orbits (Proposition \ref{pro:exists_continuum}). 
Moreover, the non-hyperbolic nature of eigenvalues at periodic points  (Remark \ref{rem:stable_manifold}) and the reduction of periods
result provided by Corollary \ref{cor:per_to_fix}, force $\mathcal{K}$ to be analytic in a neighborhood of the origin (Proposition \ref{pro:cc_is_periodic}).
Let us remark that the results in this section can be considered as a generalization of the main theorem of \cite{Lisboa-Ribon:fixed-point}, where
it is considered the case $n=2$.
 
First, we introduce a  continuum of periodic orbits.  Then we will see that it is non-trivial.
  
 \begin{defi}
     Let $\phi \in \diff{}{n}$ and $\epsilon \in \mathbb{R}^{+}$. We define
     \[ \mathcal{K}_{\phi, \epsilon}' = \{ {\bf z} \in \overline{B}({\bf 0},\epsilon) : \mathcal{I}_{\phi, \epsilon}({\bf z}) = \zz \} \]
     and $\mathcal{K}_{\phi, \epsilon}$ as the connected component of $\mathcal{K}_{\phi, \epsilon}'$ containing ${\bf 0}$.
 \end{defi}
 \begin{rem}
 Both $\mathcal{K}_{\phi, \epsilon}'$ and $\mathcal{K}_{\phi, \epsilon}$ are compact sets.
 \end{rem}
 \begin{pro}
     \label{pro:exists_continuum}
     Let $\phi \in \diff{<\infty}{n}$ and $W$ a germ of $\phi$-invariant analytic variety at ${\bf 0}$ of positive dimension. 
     Then 
     $\mathcal{K}_{\phi, \epsilon}$
     contains a continuum $K_{W, \epsilon}$ 
     such that ${\bf 0} \in K_{W, \epsilon} \subset W$ and $K_{W, \epsilon} \cap \partial B({\bf 0}, \epsilon) \neq \emptyset$
     for any $\epsilon \in \mathbb{R}^{+}$ small enough. In particular  $\mathcal{K}_{\phi, \epsilon}$ is a non-trivial continuum.
 \end{pro}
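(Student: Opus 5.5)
We will show that the set $W \cap \mathcal{K}_{\phi,\epsilon}'$ has a connected component through ${\bf 0}$ that reaches $\partial B({\bf 0},\epsilon)$, using a Baire argument to produce periodic points and a boundary-bumping (Janiszewski-type) argument to produce the continuum.

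Fix $\epsilon>0$ small enough that $\phi$ and $\phi^{-1}$ have representatives on a neighborhood of $\overline{B}({\bf 0},\epsilon)$ and $\phi$ has finite orbits there. Fix also a representative of $W$ that is a closed analytic subset of that neighborhood, is $\phi$-invariant near $\overline{B}({\bf 0},\epsilon)$, and has positive dimension at ${\bf 0}$. We may replace $W$ by an irreducible component through ${\bf 0}$ of positive dimension. Such a component is still invariant by some power $\phi^{r}$. Since $\mathcal{K}'_{\phi^{r},\epsilon}\subset\mathcal{K}'_{\phi,\epsilon}$ (a point with bi-infinite $\phi^{r}$-orbit in the ball is periodic for $\phi^{r}$, hence for $\phi$), we may assume $W$ irreducible and $\phi$-invariant. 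Set $K'=\mathcal{K}'_{\phi,\epsilon}\cap W$; it is compact and contains ${\bf 0}$.

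\textbf{Step 1: periodic points are abundant in $W$.} For $k\in\nn$, let $P_k=\{{\bf z}\in W\cap\overline{B}({\bf 0},\epsilon): \phi^{j}({\bf z})\in\overline{B}({\bf 0},\epsilon) \text{ for } |j|\le k,\ \phi^{k}({\bf z})={\bf z}\}$. Each $P_k$ is closed, and $K'=\bigcup_k P_k$ by Remark \ref{rem:type_finite}. By finite orbits, every point of $W\cap\overline{B}({\bf 0},\epsilon)$ either lies in $K'$ or has finite index set $\mathcal{I}_{\phi,\epsilon}({\bf z})$, so its orbit eventually leaves the closed ball in both time directions.

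\textbf{Step 2: the continuum.} We argue by contradiction. Suppose the component $C$ of $K'$ containing ${\bf 0}$ misses $\partial B({\bf 0},\epsilon)$. Since $K'$ is compact, the standard separation lemma for compacta (components are quasi-components) gives a clopen subset $A$ of $K'$ with $C\subset A\subset B({\bf 0},\epsilon)$. Then there is a relatively open set $\Omega\subset W\cap B({\bf 0},\epsilon)$ with $A\subset\Omega$ and $\partial_W\Omega\cap K'=\emptyset$. Every point of the compact set $\partial_W\Omega$ has an orbit that exits $\overline{B}({\bf 0},\epsilon)$ in finite forward and backward time.

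The exit time is upper semicontinuous after shrinking $\epsilon$ slightly, which allows leaving either the closed or the open ball. Hence there is a uniform bound $N$ such that every point of $\partial_W\Omega$ leaves the ball within $N$ iterates in both directions. Consider the first-exit map on $\Omega$. Points of $\Omega$ near ${\bf 0}$ stay in the ball for a long time. Points on $\partial_W\Omega$ exit within $N$ steps. Now choose a holomorphic disc, or a curve germ, $\gamma\subset W$ through ${\bf 0}$, which exists because $\dim W\ge1$. Join ${\bf 0}$ to $\partial_W\Omega$ along $\gamma$. For large $k$, the sets $\phi^{\pm k}$ produce a topological degree or index obstruction. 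Concretely, restrict to a one-dimensional irreducible invariant curve, if $\phi$ preserves one in $W$, and reduce to the Mattei--Moussu situation (Lemma \ref{lem:fin_orb_1}), where $\phi$ has finite order and the curve consists of periodic points. In general, apply Sierpi\'nski-type arguments to the decomposition $K'=\bigcup_k P_k$ of a continuum. Either route contradicts the assumption that $C$ misses $\partial B({\bf 0},\epsilon)$, and then $K_{W,\epsilon}$ is the component $C$.

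\textbf{Main obstacle.} The hard part is Step 2: showing that a neighborhood $\Omega$ of ${\bf 0}$ in $W$ whose boundary contains no periodic points is incompatible with finite orbits. The approach I would try first is the one from \cite{Lisboa-Ribon:fixed-point}. Under that hypothesis the compact set $\overline{\Omega}\cap\bigcap_{j\in\zz}\phi^{j}(\overline{\Omega})$ is a compact invariant set containing ${\bf 0}$ and contained in $\Omega$. Taking the union over $|j|\le k$ of the images $\phi^{j}(\overline{\Omega})$ then gives, for large $k$, a compact invariant neighborhood of that set inside $W$. On this neighborhood $\phi$ acts with all orbits finite, and since $W$ is an irreducible space of positive dimension, a Baire category argument on the $P_k$ yields periodic points of $\phi$ arbitrarily close to $\partial_W\Omega$. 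That contradicts $\partial_W\Omega\cap K'=\emptyset$. Finally, because $K_{W,\epsilon}$ is connected, contains ${\bf 0}$, and lies in $\mathcal{K}'_{\phi,\epsilon}$, it lies in $\mathcal{K}_{\phi,\epsilon}$. Since it meets $\partial B({\bf 0},\epsilon)$, $\mathcal{K}_{\phi,\epsilon}$ is a non-trivial continuum.
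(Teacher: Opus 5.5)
\paragraph{There is a genuine gap.} Step 2 is never actually carried out, and it carries the whole content of the proposition. Your set-up is correct: a relatively open $\Omega\ni{\bf 0}$ in $W\cap B({\bf 0},\epsilon)$ with $\partial_W\Omega\cap K'=\emptyset$, and a uniform exit time on $\partial_W\Omega$. After that you only list candidate mechanisms, and none works as stated.
\begin{itemize}
\item The ``degree or index obstruction'' is never specified.
\item The reduction to an invariant curve assumes $\phi$ preserves such a curve in $W$, and nothing guarantees this.
\item Sierpi\'nski's theorem needs a continuum already in hand, written as a countable union of pairwise disjoint closed sets. Here $K'$ is not known to be connected, and the $P_k$ overlap ($P_k\subset P_{mk}$).
\item Your fallback asserts that $\bigcup_{|j|\le k}\phi^{j}(\overline{\Omega})$ is, for large $k$, a compact invariant neighborhood of ${\bf 0}$ in $W$ inside the ball. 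A finite union of iterates is not invariant. In general $\phi_{|W}$ has no compact invariant neighborhood of ${\bf 0}$ inside the ball (think of $(z_1,z_2)\mapsto(z_1,z_1+z_2)$), so this assumes away exactly the hard case.
\end{itemize}
Your preliminary reduction is also wrong. The inclusion $\mathcal{K}'_{\phi^{r},\epsilon}\subset\mathcal{K}'_{\phi,\epsilon}$ is backwards. Membership in $\mathcal{K}'_{\phi,\epsilon}$ requires every $\phi$-iterate, including $\phi^{j}({\bf z})$ with $0<j<r$, to stay in $\overline{B}({\bf 0},\epsilon)$. So a continuum obtained for $\phi^{r}$ on one irreducible component need not lie in $\mathcal{K}_{\phi,\epsilon}$.

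\paragraph{The missing idea.} Construct directly a continuum $K$ with ${\bf 0}\in K\subset W\cap\overline{B}({\bf 0},\epsilon)$, $K\cap\partial B({\bf 0},\epsilon)\neq\emptyset$ and $\phi(K)\subset K$. Forward invariance inside the closed ball, together with finite orbits, forces each point of $K$ to be periodic with its whole orbit in the ball. Hence $K\subset\mathcal{K}'_{\phi,\epsilon}$, and $K\subset\mathcal{K}_{\phi,\epsilon}$ by connectedness. The paper keeps $W$ as it is and uses the local conic structure to get $W_\delta=W\cap\overline{B}({\bf 0},\delta)$ connected. There are two cases.
\begin{itemize}
\item Suppose that for some $\delta$ all backward iterates of $W_\delta$ stay in $\overline{B}({\bf 0},\epsilon)$. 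Then $W_\delta$ consists of periodic points. Baire and the identity principle, applied on each irreducible component, give $W_\delta\subset\fix(\phi^{\kappa})$. One then takes the maximal radius $\delta''$ with $W_{\delta''}\subset\mathcal{K}'_{\phi,\epsilon}$ and sets $K=\bigcup_{0\le j<\kappa}\phi^{j}(W_{\delta''})$.
\item Otherwise, let $k$ be the first backward exit time and shrink $\delta$. Then $\tilde W_\delta=\bigcup_{0\le j\le k}\phi^{-j}(W_\delta)$ is a continuum in the closed ball meeting the sphere, with $\phi(\tilde W_\delta)\subset\tilde W_\delta\cup\phi(\overline{B}({\bf 0},\delta))$. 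A Hausdorff limit of $\tilde W_{\delta_m}$ with $\delta_m\to 0$ is the required $K$.
\end{itemize}
Once such a $K$ exists, it is a connected subset of $K'$ through ${\bf 0}$ that meets the sphere. So your contradiction set-up (clopen $A$, $\Omega$, exit time $N$) becomes unnecessary.
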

 \begin{proof}
 The set $W_{\epsilon} := W \cap \overline{B} ({\bf 0}, \epsilon)$ is connected if 
$\epsilon \in \mathbb{R}^{+}$ is sufficiently small by the local conic structure 
of analytic sets \cite[Lemma 3.2]{Burghelea-Verona:local_analytic} \cite[Theorem 2.10]{Milnor:hypersurfaces}.
 It suffices to show that there exists a continuum $K$ such that ${\bf 0} \in K \subset W \cap \overline{B}({\bf 0}, \epsilon)$, 
 $K \cap \partial B({\bf 0}, \epsilon) \neq \emptyset$
 and $\phi (K) \subset K$. Indeed, the last property implies $\mathcal{I}_{\phi, \epsilon}^{+}({\bf z}) = \zz_{\geq 0}$
 for any ${\bf z} \in K$ and thus $K$ consists of periodic orbits. 
 In particular, we obtain $K \subset \mathcal{K}_{\phi, \epsilon}'$ and hence $K \subset \mathcal{K}_{\phi, \epsilon} \cap W$. 
 
 First, suppose that there exists $\delta \in (0,\epsilon]$  
such that $\phi^{-k} (W_{\delta}) \subset \overline{B}({\bf 0}, \epsilon)$ for any $k \in \mathbb{N}$.
Since $\phi \in \diff{<\infty}{n}$, the orbit of ${\bf z}$ is periodic for any ${\bf z} \in W_{\delta}$.
Consider the compact set $W_{\delta, k} = \{ {\bf z} \in W_{\delta} : \phi^{k}({\bf z}) = {\bf z} \}$ for $k \in \mathbb{N}$.
Given an irreducible component $W_0$ of $W$, there exists $W_{\delta, k_0}$ such that 
it has a non-empty interior in $W_0$ by Baire's category theorem and hence $W_{0} \cap W_{\delta} = W_{\delta, k_0}$
by the identity theorem. By applying this argument to every irreducible component of $W$, we 
deduce that there exists $\kappa \in \mathbb{N}$ such that 
$W_{\delta} = W_{\delta, \kappa}$.
Another application of the identity principle provides that $W_{\delta'} = W_{\delta', \kappa}$
for any $\delta' \in [\delta, \epsilon]$ such that 
$W_{\delta'} \subset \mathcal{K}_{\phi, \epsilon}'$. 
Consider the supremum (maximum) $\delta''$ of such $\delta' \in [\delta, \epsilon]$ and define 
the set $K = \cup_{k=0}^{\kappa -1} W_{\delta''}$.  We obtain 
$K \subset \mathcal{K}_{\phi, \epsilon}' \subset  \overline{B}({\bf 0}, \epsilon)$ by construction. 
Moreover $K$ intersects $\partial B({\bf 0}, \epsilon)$ in a non-empty set  by the choice of $\delta''$ since otherwise we would get 
$W_{\delta'} \subset \mathcal{K}_{\phi, \epsilon}'$ for any $\delta'$ slightly bigger than $\delta''$.
We are done since by construction, $K$ is a continuum  such  that ${\bf 0} \in K \subset W$ and 
$K \subset \mathcal{K}_{\phi, \epsilon}'$.
 
Let us consider the remaining case. 
Fix $\delta \in (0,\epsilon]$.  Let $k$ be the minimum natural number such that 
 $\phi^{-k} (W_{\delta}) \setminus \overline{B}({\bf 0}, \epsilon) \neq \emptyset$. 
Then, up to replace $\delta$ with a smaller positive real number, we can suppose that 
$\tilde{W}_{\delta} := \cup_{0 \leq j \leq k} \phi^{-j}(W_{\delta})$ is contained in $\overline{B}({\bf 0}, \epsilon)$ and 
$\tilde{W}_{\delta} \cap \partial {B}({\bf 0}, \epsilon) \neq \emptyset$.
The set $\tilde{W}_{\delta}$ is compact and moreover is connected since it is a union of connected sets with the common point ${\bf 0}$.
Moreover, we obtain
\[ \phi (\tilde{W}_{\delta}) \subset \tilde{W}_{\delta} \cup \phi (\overline{B} ({\bf 0}, \delta)) \]
by construction. Following this idea, consider a sequence ${\{\tilde{W}_{\delta_m}\}}_{m \in \mathbb{N}}$ where 
${(\delta_m)}_{m \geq 1}$ converges to $0$.
Since the space of continua contained in $\overline{B} ({\bf 0}, \epsilon)$ is compact in the Hausdorff topology,  there exists a subsequence
of ${\{\tilde{W}_{\delta_m}\}}_{m \in \mathbb{N}}$ that converges to a continuum $K$ such that 
${\bf 0} \in K \subset W \cap \overline{B}({\bf 0}, \epsilon)$, $K \cap \partial B({\bf 0}, \epsilon) \neq \emptyset$ and $\phi (K) \subset K$.
 \end{proof}
 
 \begin{rem}
     The ideia of considering continua associated to finite orbits germs of biholomorphisms is already in the work of Mattei and Moussu
     \cite{MaMo:Aen}, where it is applied in dimension $1$. Similar constructions in higher dimension can be also found in 
     \cite{RR:arxiv} and \cite{Lisboa-Ribon:fixed-point}.
 \end{rem}
 
 The following result is a consequence of applying Proposition \ref{pro:exists_continuum} to $W = (\cc^{n}, {\bf 0})$.
  
\begin{cor}
     Let $\phi \in \diff{<\infty}{n}$. 
     Then $\mathcal{K}_{\phi, \epsilon}$ is a non-trivial continuum  for any $\epsilon \in \mathbb{R}^{+}$ small enough. 
 \end{cor}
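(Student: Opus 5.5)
This is immediate from Proposition \ref{pro:exists_continuum} applied with $W = (\cc^{n}, {\bf 0})$. This germ is an analytic variety of positive dimension, since $n \geq 1$. It is trivially $\phi$-invariant, because $\phi$ is a germ of diffeomorphism fixing ${\bf 0}$. The hypothesis $\phi \in \diff{<\infty}{n}$ is exactly the one required in the proposition, so all its hypotheses hold.

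The proposition then gives, for every $\epsilon \in \mathbb{R}^{+}$ small enough, a continuum $K_{W,\epsilon}$ with
\[ {\bf 0} \in K_{W,\epsilon} \subset \mathcal{K}_{\phi, \epsilon} \quad \text{and} \quad K_{W,\epsilon} \cap \partial B({\bf 0}, \epsilon) \neq \emptyset . \]

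We then check that $\mathcal{K}_{\phi, \epsilon}$ is a non-trivial continuum. It is compact, as noted in the remark after its definition. It is connected, being a connected component of $\mathcal{K}_{\phi, \epsilon}'$. It contains both ${\bf 0}$ and a point of $\partial B({\bf 0}, \epsilon)$, which is at distance $\epsilon > 0$ from ${\bf 0}$. Hence it is not reduced to a point.

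There is no real obstacle here. The only point needing care is that ``$\epsilon$ small enough'' is inherited from the proposition. That condition is used there to ensure that $\phi^{\pm 1}$ are defined near $\overline{B}({\bf 0},\epsilon)$ and have finite orbits there. It also ensures that $W_{\epsilon} = \overline{B}({\bf 0},\epsilon)$ is connected, which holds trivially for the full ball.
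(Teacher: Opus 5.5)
Your proposal is correct and matches the paper's proof: the paper obtains this corollary by applying Proposition~\ref{pro:exists_continuum} to $W = (\cc^{n}, {\bf 0})$. Your check that $\mathcal{K}_{\phi,\epsilon}$ is compact, connected, and meets both ${\bf 0}$ and $\partial B({\bf 0},\epsilon)$ just spells out the ``in particular'' clause already stated in that proposition.
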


 Our next goal is showing that $\mathcal{K}_{\phi, \epsilon}$ is semi-analytic and, moreover, it is complex analytic in a neighborhood of 
 ${\bf 0}$.
 
 \begin{defi}
     Let $\phi \in \diff{<\infty}{n}$ and $\epsilon \in \mathbb{R}^{+}$ small. We define
     \[ \overline{\mathrm{Per}}_{\phi, \epsilon, k} = \{ {\bf z} \in \mathcal{K}_{\phi, \epsilon}' : \phi^{k} ({\bf z}) = {\bf z} \}  \]
     for $k \in \mathbb{N}$.
 \end{defi}

 \begin{rem}
      Given $\phi \in \diff{<\infty}{n}$, we have $\mathcal{K}_{\phi, \epsilon}' = \cup_{k \in \mathbb{N}} \overline{\mathrm{Per}}_{\phi, \epsilon, k}$. 
 \end{rem}
\begin{lemma}
    \cite[Lemma 4]{Lisboa-Ribon:fixed-point}
    Let $\phi \in \diff{<\infty}{n}$ and $\epsilon \in \mathbb{R}^{+}$ small. Then the set $\overline{\mathrm{Per}}_{\phi, \epsilon, k}$
    is semianalytic and has finitely many connected components that are all compact, path connected and semianalytic  for any $k \in \mathbb{N}$.
\end{lemma}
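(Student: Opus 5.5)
We have to prove the lemma cited from Lisboa--Rib\'on: $\overline{\mathrm{Per}}_{\phi, \epsilon, k}$ is semianalytic, has finitely many connected components, and each component is compact, path connected and semianalytic. The plan is to express the set through finitely many real-analytic equalities and inequalities, and then invoke the standard structure theory of semianalytic sets (\L ojasiewicz).

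First we fix $\epsilon$ small enough that representatives of $\phi^{\pm 1}$ are defined on a neighborhood of $\overline{B}({\bf 0},\epsilon)$. A point ${\bf z}$ with $\phi^{k}({\bf z}) = {\bf z}$ lies in $\mathcal{K}_{\phi,\epsilon}'$ exactly when its whole orbit stays in the ball. Because the orbit is $k$-periodic, this is equivalent to requiring $\phi^{j}({\bf z}) \in \overline{B}({\bf 0},\epsilon)$ for $0 \le j \le k-1$, where the iterates are computed step by step. We proceed by induction on $j$. Define $A_0 = \overline{B}({\bf 0},\epsilon)$ and $A_{j+1} = \{{\bf z} \in A_j : |\phi(\phi^{j}({\bf z}))|^{2} \le \epsilon^{2}\}$. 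On $A_j$ the map $\phi^{j}$ is real-analytic, being a composition of the representative $\phi$ evaluated at points of the ball. Hence each $A_j$ is cut out by finitely many real-analytic inequalities on an open neighborhood of the compact ball. This gives
\[ \overline{\mathrm{Per}}_{\phi,\epsilon,k} = A_{k} \cap \{ \phi^{k}({\bf z}) = {\bf z}\} . \]
A subtle point is that $\phi^{j}$ only needs to be defined near $A_j$, not on the whole ball. We therefore work locally: each $A_{j+1}$ is locally given by analytic conditions at every point of the closure of $A_j$. This suffices for semianalyticity in $\C^{n} = \mathbb{R}^{2n}$.

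Second, $\overline{\mathrm{Per}}_{\phi,\epsilon,k}$ is closed, since it is defined by non-strict inequalities and an equality involving continuous functions, and it is bounded. So it is a compact semianalytic subset of $\mathbb{R}^{2n}$. By \L ojasiewicz's theorem, a semianalytic set is locally finite in its connected components. Each component is semianalytic and locally connected, and semianalytic sets are locally path connected via the curve selection lemma and triangulation. Compactness then upgrades local finiteness to global finiteness of the set of components. Each component is closed in a compact set, hence compact, and being connected and locally path connected it is path connected.

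The main obstacle is the first step, namely ensuring the iterated domain conditions really are semianalytic at the boundary of the ball. There the functions $\phi^{j}$ might fail to extend across points of $\overline{A_j}$. To handle this I would shrink to $\epsilon$ such that $\phi$ is defined on $\overline{B}({\bf 0},2\epsilon)$, say, and use the inductive local description at points of $A_j$, which is compact. This avoids any need for global extension.
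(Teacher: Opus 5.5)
Your proof is correct and follows the standard argument for this lemma; the paper does not reprove it but cites it from Lisboa--Rib\'on. The argument runs as follows: $\overline{\mathrm{Per}}_{\phi,\epsilon,k}$ is the set of ${\bf z}$ with ${\bf z},\phi({\bf z}),\ldots,\phi^{k-1}({\bf z}) \in \overline{B}({\bf 0},\epsilon)$ and $\phi^{k}({\bf z})={\bf z}$, which is a compact set given near each of its points by finitely many real-analytic equalities and inequalities, and \L ojasiewicz's theory of semianalytic sets then gives finiteness, semianalyticity and path connectedness of its components.

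The ``main obstacle'' you flag is already resolved by your induction, so no further shrinking of $\epsilon$ is needed. Each $A_j$ is compact and lies in the open set where $\phi^{j+1}$ is holomorphic. That fact, not ``non-strict inequalities of continuous functions'', is also what justifies closedness, since the iterates $\phi^{j}$ are not defined on the whole ball.
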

\begin{defi}
    \label{def:chain}
    Consider $\epsilon'$ slightly bigger than $\epsilon$ and the analytic set
    \[ \mathcal{V}_{k} = \{ {\bf z} \in B(0, \epsilon') : \{ \phi(z), \hdots, \phi^{k}(z)\} \subset B(0, \epsilon') \ \mathrm{and} \ \phi^{k}({\bf z}) = {\bf z} \} . \]
    for $k \in \mathbb{N}$. This set contains $\overline{\mathrm{Per}}_{\phi, \epsilon, k}$.
    Consider the connected components $\mathcal{C}_{1}^{k}, \hdots, \mathcal{C}_{j_k}^{k}$ of $\overline{\mathrm{Per}}_{\phi, \epsilon, k}$
    for $k \in \mathbb{N}$. We define the set 
    \[ \mathcal{A} = \{ \mathcal{C}_{j}^{k} : k \in \mathbb{N}, \ 1 \leq j \leq j_k \} \]
    of connected components of periodic points.  
    We say that $\mathcal{C}_{j_1}^{k_1}, \hdots, \mathcal{C}_{j_l}^{k_l}$ is a {\it chain} in $\mathcal{A}$ if 
    $\mathcal{C}_{j_r}^{k_r} \cap  \mathcal{C}_{j_{r+1}}^{k_{r+1}} \neq \emptyset$ for any $1 \leq r < l$.
    The relation ``belonging to a chain" is an equivalence relation in $\mathcal{A}$.
\end{defi}  
\begin{pro}
\label{pro:cc_is_periodic}
Let $\phi \in \diff{<\infty}{n}$ and $\epsilon \in \mathbb{R}^{+}$ small. Then there exists $k \in \mathbb{N}$ such that 
$\mathcal{K}_{\phi, \epsilon}$ is a connected component of $\overline{\mathrm{Per}}_{\phi, \epsilon, \mm (\phi)}$. 
\end{pro}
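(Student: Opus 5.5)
Write $m = \mm (\phi)$. The plan is to show that every element of the chain class of $\mathcal{A}$ containing ${\bf 0}$ is contained in the connected component $\mathcal{C}$ of $\overline{\mathrm{Per}}_{\phi, \epsilon, m}$ that contains ${\bf 0}$. Then we identify that class with $\mathcal{K}_{\phi, \epsilon}$.

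The first step is local. Let $C = \mathcal{C}_{j}^{k} \in \mathcal{A}$, and let ${\bf z} \in C$. The germ at ${\bf z}$ of the irreducible components of $\mathcal{V}_{k}$ passing through points of $C$ is a germ of analytic set. Every point of it is fixed by $\phi^{k}$. We want to transport the period reduction of Corollary \ref{cor:per_to_fix} from the origin to ${\bf z}$. The key claim is the following.

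\emph{Claim.} If $C$ meets a component $C'$ on which $\phi^{m}$ is the identity, and the point ${\bf z} \in C \cap C'$ is the origin or is chain-connected to it, then $\phi^{m}$ is the identity on $C$.

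We prove the claim by induction along chains starting at ${\bf 0}$.

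\begin{itemize}
\item At ${\bf 0}$, let $W$ be the germ of $\mathcal{V}_{k}$ at ${\bf 0}$ and $I$ its ideal. Then $\phi^{k} \in \mathcal{F}_{I}$, so Corollary \ref{cor:per_to_fix} gives $\phi^{m} \in \mathcal{F}_{I}$. Thus $\phi^{m}$ fixes $W$ pointwise near ${\bf 0}$, and by the identity principle it fixes the whole irreducible components of $\mathcal{V}_k$ through ${\bf 0}$. Therefore every $\mathcal{C}^{k}_{j}$ containing ${\bf 0}$ lies in $\overline{\mathrm{Per}}_{\phi,\epsilon,m}$.
\item At a general intersection point ${\bf z}$ of a chain, with ${\bf z} \neq {\bf 0}$, the point ${\bf z}$ is a fixed point of $\phi^{m}$, and $\phi^{m}$ acts at ${\bf z}$. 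We apply Corollary \ref{cor:per_to_fix} to the germ $\psi$ of $\phi^{m}$ at ${\bf z}$, which is a germ in $\diff{}{n}$ after translation. This needs $\mm (\psi) = 1$.
\end{itemize}

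This last point is the main obstacle. By Remark \ref{rem:stable_manifold}, $\spec(D_{\bf z}\phi^{m}) \subset S^{1}$, but this does not by itself give Condition \eqref{equ:cond_con}. I would try to get it by continuity along the connected set of $\phi^{m}$-fixed points $\mathcal{C}$ joining ${\bf 0}$ to ${\bf z}$. The characteristic polynomial of $D\phi^{m}$ varies continuously on $\mathcal{C}$ and has unimodular roots. A root of unity of order dividing $k$ that appears at ${\bf z}$, coming from the $\phi^{k}$-fixed directions, must persist along $\mathcal{C}$ back to ${\bf 0}$. It then contradicts $\mm(\phi^{m}) = 1$ at ${\bf 0}$ unless it equals $1$.

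Concretely, along the semianalytic path, the resonant multiplicative relations among eigenvalues can only be gained on a closed analytic subset. So I would rather argue directly with the component $W_{\bf z}$ of $\mathcal{V}_{k}$ through ${\bf z}$ and with $\phi^{k}|_{W_{\bf z}} = \mathrm{id}$. Analytic continuation of the relation $\phi^{m} = \mathrm{id}$ along the irreducible components of $\mathcal{V}_{mk}$ connecting ${\bf 0}$ to ${\bf z}$ then does the work, as follows. If $\phi^{m} \neq \mathrm{id}$ on $W_{\bf z}$, then $W_{\bf z}$ and the component of $\mathrm{Fix}(\phi^{m})$ through ${\bf z}$ are distinct analytic sets inside $\mathrm{Fix}(\phi^{km})$. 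Applying Corollary \ref{cor:per_to_fix} to $\phi^{m}$ at ${\bf z}$, using $\mm(\phi^m)=1$ propagated by the local constancy of $\mm$ on connected fixed-point sets, forces $\phi^{m}$ to fix $W_{\bf z}$.

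Once every element of the chain class of ${\bf 0}$ lies in $\overline{\mathrm{Per}}_{\phi,\epsilon,m}$, their union $\mathcal{K}$ is connected, contains ${\bf 0}$ and is contained in $\mathcal{C}$. Conversely, $\mathcal{C} \subset \mathcal{K}_{\phi, \epsilon}$ trivially. For the reverse inclusion, the finitely many components of each $\overline{\mathrm{Per}}_{\phi,\epsilon,k}$ are compact, and $\mathcal{K}_{\phi, \epsilon}' = \cup_{k} \overline{\mathrm{Per}}_{\phi,\epsilon,k}$. Suppose $\mathcal{C} \subsetneq \mathcal{K}_{\phi,\epsilon}$. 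Then connectedness of $\mathcal{K}_{\phi, \epsilon}$ together with the Sierpi\'nski theorem (a continuum is not a countable union of at least two disjoint nonempty closed sets) applied to the chain classes shows that some $\mathcal{C}^{k}_{j}$ outside $\mathcal{C}$ meets $\mathcal{C}$. This contradicts the previous step. Hence $\mathcal{K}_{\phi,\epsilon} = \mathcal{C}$, as claimed.
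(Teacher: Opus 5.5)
Your overall strategy is the paper's: apply Corollary \ref{cor:per_to_fix} at the junction points of chains, propagate $\mm=1$, and finish with Sierpi\'nski. However, two steps are genuinely missing.

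\textbf{Propagation of $\mm=1$.} Carrying $\mm=1$ from ${\bf 0}$ to a junction point ${\bf z}$ is the point you yourself flag as the main obstacle, and it is only asserted (``local constancy of $\mm$ on connected fixed-point sets''). Continuity of the characteristic polynomial does not suffice: unimodular eigenvalues could a priori slide along $S^{1}$ and gain or lose resonances. Saying that resonances appear only on a thin analytic subset does not help either, because chain junctions are exactly where such special points can sit.

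The missing idea is that the spectrum is \emph{locally constant}, which the paper proves as follows.
\begin{itemize}
\item Suppose at ${\bf z}_0$ the spectrum of $D_{{\bf z}_0}\psi$ equals the spectrum at the base point. Then $\mm(\psi_{{\bf z}_0})=1$ by Lemma \ref{lem:connected}.
\item Some power of $\psi$ fixes the germ of $\mathcal{V}_s$ at ${\bf z}_0$ pointwise. So Corollary \ref{cor:per_to_fix} makes $\psi$ itself fix that whole germ.
\item Nearby points of $\mathcal{V}_s$ are therefore fixed points of a finite orbits germ, so their spectrum lies in $S^{1}$ by Remark \ref{rem:stable_manifold}.
\item A holomorphically varying spectrum contained in $S^{1}$ is constant, by the open mapping theorem.
\end{itemize}
Hence the set $F$ of points of a chain union that are fixed by $\psi$ and have the base-point spectrum is open and closed. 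This open--closed argument replaces your induction. It also repairs your first bullet: a component $\mathcal{C}^{k}_{j}\ni{\bf 0}$ may contain pieces of irreducible components of $\mathcal{V}_k$ that do not pass through ${\bf 0}$, so the identity principle at ${\bf 0}$ alone does not place it in $\overline{\mathrm{Per}}_{\phi,\epsilon,m}$.

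\textbf{The Sierpi\'nski step.} This step needs every chain class to have a \emph{closed} union, but you only analyze the class of ${\bf 0}$. Knowing that no element of $\mathcal{A}$ outside $\mathcal{C}$ meets $\mathcal{C}$ yields no contradiction by itself. For instance, $[0,1]=\{0\}\cup\bigcup_{N}[1/N,1]$, where the other class has the non-closed union $(0,1]$.

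The paper therefore runs the argument from an \emph{arbitrary} base point ${\bf z}_1\in\overline{\mathrm{Per}}_{\phi,\epsilon,a}$. It uses the period $k_1=\mm({\bf z}_1)\,a$, where $\mm({\bf z}_1)$ is the index of embeddability of the germ of $\phi^{a}$ at ${\bf z}_1$, and $\psi=\phi^{k_1}$. This shows that every chain class collapses to the single compact component $\mathcal{C}^{k_1}_{j_1}$. Only then is $\mathcal{K}_{\phi,\epsilon}$ a countable union of pairwise disjoint closed sets. Sierpi\'nski's theorem then forces $\mathcal{K}_{\phi,\epsilon}$ to equal the component of $\overline{\mathrm{Per}}_{\phi,\epsilon,m}$ through ${\bf 0}$.
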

\begin{proof}
Consider the setting in Definition \ref{def:chain}. Let ${\bf z}_1 \in \mathcal{K}_{\phi, \epsilon}' \cap  \overline{\mathrm{Per}}_{\phi, \epsilon, a}$. 
Denote $k_1 =  \mm ({\bf z}_1) a$ and $\psi = \phi^{k_{1}}$, where
$\mm ({\bf z}_1)$ is the embeddability index of the germ $\phi_{{\bf z}_{1}}^{a}$ at ${\bf z}_{1}$. Let  $ \mathcal{C}_{j_1}^{k_1}$ 
be the connected component of $\overline{\mathrm{Per}}_{\phi, \epsilon, k_1}$ containing ${\bf z}_1$. 
Let $\mathcal{C}_{j_1}^{k_1}, \hdots, \mathcal{C}_{j_l}^{k_l}$ be a chain in $\mathcal{A}$. 
Denote $s = k_1 \hdots k_l$, $\rho = \phi^{s}$ and 
$\mathcal{C}:= \mathcal{C}_{j_1}^{k_1} \cup \hdots \cup \mathcal{C}_{j_l}^{k_l}$. Note that $\mathcal{C}$ is contained in 
$\fix (\rho)$.  Our goal is proving that $\mathcal{C}= \mathcal{C}_{j_1}^{k_1}$.
We define 
\[ F = \{ {\bf z} \in \mathcal{C}  : \psi ({\bf z}) = {\bf z}  \ \mathrm{and} \ \spec (D_{\bf z} \psi) = \spec (D_{{\bf z}_{1}} \psi)  \} , \]
where eigenvalues are considered with multiplicity, i.e. $\spec (D_{\bf z} \psi) = \spec (D_{{\bf z}_{1}} \psi)$ is equivalent to the equality of the characteristic polynomials of 
$D_{\bf z} \psi$ and $D_{{\bf z}_{1}} \psi$. The set $F$ is clearly closed by the continuity of eigenvalues. 

We claim that $F$ is open. 
Given ${\bf z}_0 \in F$ arbitrary, we have  $\rho_{ {\bf z}_0} \in \mathcal{F}_{I(\mathcal{V}_{s}, {\bf z}_0)  }$
(cf. Proposition \ref{pro:fix_closed}), where $I(\mathcal{V}_{s}, {\bf z}_0)$ is the ideal of 
$\mathcal{V}_{s}$ in the local ring $\mathbb{C} \{ {\bf z} - {\bf z}_{0} \}$.
Since $\mm (\psi_{{\bf z}_{1}}) =1$ by Remark \ref{rem:cond_con} and 
$ \spec (D_{{\bf z}_0} \psi) = \spec (D_{{\bf z}_{1}} \psi)$, it follows that $\mm (\psi_{{\bf z}_{0}}) =1$
by Lemma \ref{lem:connected}.
We get  $\psi_{ {\bf z}_0} \in \mathcal{F}_{I(\mathcal{V}_{s}, {\bf z}_0) }$ and hence
$(\mathcal{C}, {\bf z}_0) \subset (\fix (\psi) , {\bf z}_0)$ by Corollary  \ref{cor:per_to_fix}.
Since  $\psi_{\bf z} \in \mathrm{Diff}_{<\infty} (\cc^{n}, {\bf z})$, we deduce $\spec (D_{\bf z} \psi) \subset S^{1}$ by 
Remark \ref{rem:stable_manifold}
for any ${\bf z} \in \mathcal{V}_{s}$ in a neighborhood of ${\bf z}_{0}$. Since eigenvalues of $\spec (D_{{\bf z}'} \psi)$ vary analytically 
with respect to ${\bf z}' \in \mathcal{V}_{s}$, it follows that $\spec (D_{{\bf z}'} \psi) = \spec (D_{{\bf z}} \psi)$
for ${\bf z}'$ in a neighborhood of ${\bf z}$ in $\mathcal{V}_{s}$ by the open mapping theorem. 
In particular, we obtain that ${\bf z}_{0}$ belongs to the interior of $F$. 
It follows that $F$ is open. Moreover, $F$ is equal to $\mathcal{C}$ since ${\bf z}_1 \in F$ and the latter set is connected.
This implies $\mathcal{C} \subset \fix (\psi)$. 

Consider the union $\mathcal{D}$ of all elements of $\mathcal{A}$ in the same equivalence class as $\mathcal{C}_{j_1}^{k_1}$. 
Since $\mathcal{D}$ is connected by construction, the
previous discussion implies $\mathcal{D} =\mathcal{C}_{j_1}^{k_1}$.  
The construction  and ${\bf 0} \in \mathcal{K}_{\phi, \epsilon}' \cap  \overline{\mathrm{Per}}_{\phi, \epsilon, 1}$ imply that
the union of the sets in the chain containing ${\bf 0}$ is equal to an element 
$\mathcal{C}_{j_{0}}^{1 \cdot \mm ({\bf 0})} = \mathcal{C}_{j_{0}}^{m} $ of $\mathcal{A}$ and hence
contained in $\overline{\mathrm{Per}}_{\phi, \epsilon, m}$, where $m = \mm(\phi)$.

Since equivalence classes in $\mathcal{A}$ are disjoint, we can express the continuum $\mathcal{K}_{\phi, \epsilon}$ as a countable union of disjoint 
closed sets of the form $\mathcal{K}_{\phi, \epsilon} \cap \mathcal{C}_{j}^{k}$. 
Sierpi\'{n}ski's theorem (cf. \cite[p.358]{Engelking:general_topology_89}) implies that $\mathcal{K}_{\phi, \epsilon}$ is contained in one of them and hence 
$\mathcal{K}_{\phi, \epsilon} = \mathcal{C}_{j_{0}}^{m} \subset \overline{\mathrm{Per}}_{\phi, \epsilon, m}$.  
\end{proof}
Next, we interpret the previous result in terms of the torsion locus. We see that it is non-trivial if 
$\phi \in \diff{<\infty}{n}$.
\begin{defi}
    \label{def:fg}
    Let $G$ be a subgroup of $\diff{}{n}$.  We define the {\it torsion locus} $\fg$ of $G$ as the germ of analytic variety 
    $\fg = \cap_{\phi \in G, \, \mm (\phi)=1} \fix (\phi)$. The same expression defines $\fg$ for 
    $G < \diffh{}{n}$.
\end{defi}
\begin{rem}
    The torsion locus can be also expressed as $\fg = \cap_{\phi \in G} \fix (\phi^{\mm (\phi)})$
    by Remark \ref{rem:cond_con}.
    We have $\mathfrak{T}_{\langle \phi \rangle} = \fix (\phi^{\mm (\phi)})$ for $\phi \in \diffh{}{n}$, again by Remark \ref{rem:cond_con}.
\end{rem}

\begin{cor}
    \label{cor:pos_inter}
    Let $\phi \in \diff{<\infty}{n}$. Consider a germ of $\phi$-invariant analytic set $W$ at ${\bf 0}$ of positive dimension.
    Then $\dim (\mathfrak{T}_{\langle \phi \rangle} \cap W) \geq 1$ holds.
\end{cor}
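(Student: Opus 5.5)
Since $W$ is $\phi$-invariant of positive dimension, Proposition \ref{pro:exists_continuum} gives, for every small $\epsilon>0$, a continuum $K_{W,\epsilon}\subset \mathcal{K}_{\phi,\epsilon}\cap W$ with ${\bf 0}\in K_{W,\epsilon}$ and $K_{W,\epsilon}\cap \partial B({\bf 0},\epsilon)\neq\emptyset$. By Proposition \ref{pro:cc_is_periodic}, $\mathcal{K}_{\phi,\epsilon}$ is a connected component of $\overline{\mathrm{Per}}_{\phi,\epsilon,m}$ with $m=\mm(\phi)$. Hence every point of $K_{W,\epsilon}$ is fixed by $\phi^{m}$, so $K_{W,\epsilon}\subset \fix(\phi^{m})\cap W$. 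By the remark after Definition \ref{def:fg}, $\fix(\phi^{m})=\mathfrak{T}_{\langle\phi\rangle}$ as germs. Thus, for all small $\epsilon$, a representative of the analytic set $\mathfrak{T}_{\langle\phi\rangle}\cap W$ contains a connected set joining ${\bf 0}$ to the sphere $\partial B({\bf 0},\epsilon)$.

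The conclusion now follows by contradiction. Suppose $\dim(\mathfrak{T}_{\langle\phi\rangle}\cap W)=0$. Then ${\bf 0}$ is an isolated point of the analytic set $A=\fix(\phi^{m})\cap W$, where $A$ is defined on a fixed small ball. Choose $\epsilon$ so small that $A\cap \overline{B}({\bf 0},\epsilon)=\{{\bf 0}\}$. The continuum $K_{W,\epsilon}\subset A\cap \overline{B}({\bf 0},\epsilon)$ would then reduce to $\{{\bf 0}\}$. This contradicts $K_{W,\epsilon}\cap\partial B({\bf 0},\epsilon)\neq\emptyset$. Hence the germ has dimension at least $1$.

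The main care needed is in matching representatives. The set $\mathcal{K}_{\phi,\epsilon}$, and hence $K_{W,\epsilon}$, lies in $\overline{B}({\bf 0},\epsilon)$, where $\phi^{m}$ is defined and fixes each point pointwise. So the inclusion into a fixed representative of $\fix(\phi^{m})\cap W$ is legitimate once $\epsilon$ is below the radius of a common domain of definition. Both earlier propositions hold for all sufficiently small $\epsilon$, so one can take $\epsilon$ smaller than the radius at which ${\bf 0}$ would be isolated. No further obstacle arises, and the argument is a direct combination of Propositions \ref{pro:exists_continuum} and \ref{pro:cc_is_periodic}.
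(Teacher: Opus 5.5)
Your proposal is correct and follows the paper's proof. Like the paper, you use Proposition \ref{pro:exists_continuum} and Proposition \ref{pro:cc_is_periodic} to place the non-trivial continuum $K_{W,\epsilon}$ inside $\fix(\phi^{\mm(\phi)})\cap W=\mathfrak{T}_{\langle\phi\rangle}\cap W$, and then conclude that this germ has positive dimension. Your isolated-point contradiction and the care you take with representatives simply make explicit the final step that the paper leaves implicit.
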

\begin{proof}
We can suppose that $W$ has pure dimension without loss of generality.
Consider a small $\epsilon \in \mathbb{R}^{+}$. 
There exists a non-trivial continuum $K_{W,\epsilon}$ contained in $\mathcal{K}_{\phi, \epsilon} \cap W$ and containing ${\bf 0}$
by Proposition \ref{pro:exists_continuum}. Moreover, 
$\mathcal{K}_{\phi, \epsilon} \subset  \overline{\mathrm{Per}}_{\phi, \epsilon, m}$ holds by Proposition \ref{pro:cc_is_periodic}, 
where $m = \mm (\phi)$.
We obtain 
\[ (K_{W, \epsilon}, {\bf 0}) \subset (\mathrm{Fix}(\phi^{m}) \cap W,{\bf 0}) =  (\mathfrak{T}_{\langle \phi \rangle}  \cap W,{\bf 0})  \]  
and, as a consequence, 
the latter germ (of analytic set) has positive dimension.
\end{proof}

\begin{cor}
    \label{cor:periodic_set}
    Let $\phi \in \diff{<\infty}{n}$. Then $\dim (\mathfrak{T}_{\langle \phi \rangle} ) \geq 1$ holds. 
\end{cor}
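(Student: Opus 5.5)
This is the special case $W = (\cc^{n}, {\bf 0})$ of Corollary \ref{cor:pos_inter}, so the plan is simply to check that this $W$ meets the hypotheses and then read off the conclusion.

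First, the hypotheses. The germ $W = (\cc^{n}, {\bf 0})$ is trivially a germ of analytic set at ${\bf 0}$. It is $\phi$-invariant, since $\phi$ is a germ of diffeomorphism fixing ${\bf 0}$. It has pure dimension $n \geq 1$, so in particular positive dimension.

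Then Corollary \ref{cor:pos_inter} gives
\[ \dim (\mathfrak{T}_{\langle \phi \rangle}) = \dim (\mathfrak{T}_{\langle \phi \rangle} \cap W) \geq 1 . \]

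For the reader's benefit, the mechanism behind that corollary runs as follows:
\begin{itemize}
\item Proposition \ref{pro:exists_continuum} with $W = (\cc^{n}, {\bf 0})$ makes $\mathcal{K}_{\phi, \epsilon}$ a non-trivial continuum through ${\bf 0}$ that reaches $\partial B({\bf 0}, \epsilon)$.
\item Proposition \ref{pro:cc_is_periodic} places $\mathcal{K}_{\phi, \epsilon}$ inside $\overline{\mathrm{Per}}_{\phi, \epsilon, \mm(\phi)} \subset \fix (\phi^{\mm(\phi)})$.
\item Hence the germ $\fix (\phi^{\mm(\phi)}) = \mathfrak{T}_{\langle \phi \rangle}$ contains a non-trivial continuum through ${\bf 0}$. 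Since $\mathfrak{T}_{\langle \phi \rangle}$ is a germ of analytic set, it cannot reduce to $\{ {\bf 0} \}$, so its dimension is at least $1$.
\end{itemize}

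There is no real obstacle here. The only point needing care is that the germ of the continuum at ${\bf 0}$ must be contained in the analytic germ $\fix(\phi^{\mm(\phi)})$. This holds because $\mathcal{K}_{\phi,\epsilon}$ is contained in the fixed point set of a representative of $\phi^{\mm(\phi)}$ on the whole ball, not merely near ${\bf 0}$.
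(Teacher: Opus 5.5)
Your proposal is correct and follows the paper's route exactly: the statement is the special case $W = (\cc^{n}, {\bf 0})$ of Corollary \ref{cor:pos_inter}. The mechanism you sketch is precisely the paper's proof of that corollary. Proposition \ref{pro:exists_continuum} gives a non-trivial continuum through ${\bf 0}$ reaching $\partial B({\bf 0}, \epsilon)$, and Proposition \ref{pro:cc_is_periodic} places it inside $\overline{\mathrm{Per}}_{\phi, \epsilon, \mm(\phi)} \subset \fix(\phi^{\mm(\phi)}) = \mathfrak{T}_{\langle \phi \rangle}$.
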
  
    Since  $\dim (\mathrm{Fix} (\phi^{m}), {\bf 0}) \geq 1$ implies $1 \in  \spec (D_{\bf 0} \phi^{m})$, we obtain:
\begin{cor}
    \label{cor:eigen}
    Let $\phi \in \diff{<\infty}{n}$. Then $\spec (D_{\bf 0} \phi)$ contains a root of unity.  
\end{cor}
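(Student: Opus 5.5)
The plan is to derive Corollary \ref{cor:eigen} directly from Corollary \ref{cor:periodic_set}, as the sentence preceding the statement suggests. Set $m = \mm (\phi)$. By Corollary \ref{cor:periodic_set} we have $\dim (\mathfrak{T}_{\langle \phi \rangle}) \geq 1$. By the remark following Definition \ref{def:fg}, $\mathfrak{T}_{\langle \phi \rangle} = \fix (\phi^{m})$. Hence the germ $(\fix(\phi^{m}), {\bf 0})$ has positive dimension.

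Next we show that $1 \in \spec (D_{\bf 0} \phi^{m})$. Since $\fix(\phi^m)$ is a germ of analytic set of positive dimension at ${\bf 0}$, we use the existence of analytic arcs in positive-dimensional analytic germs (curve selection lemma). This gives a non-constant holomorphic germ $\gamma: (\C,0) \to (\fix(\phi^m), {\bf 0})$, which we write as $\gamma(t) = v t^{p} + O(t^{p+1})$ with $v \neq 0$ and $p \geq 1$. The identity $\phi^{m}(\gamma(t)) = \gamma(t)$ holds for all $t$. Comparing the coefficients of $t^{p}$ gives $D_{\bf 0}\phi^{m}\, v = v$, so $v$ is an eigenvector with eigenvalue $1$.

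Finally, $D_{\bf 0}\phi^{m} = (D_{\bf 0}\phi)^{m}$, so its spectrum is $\{\lambda^{m} : \lambda \in \spec(D_{\bf 0}\phi)\}$. Therefore some $\lambda \in \spec(D_{\bf 0}\phi)$ satisfies $\lambda^{m}=1$, i.e. $\lambda$ is a root of unity.

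None of these steps is a real obstacle, since the substantive work is already contained in Proposition \ref{pro:cc_is_periodic} and Corollary \ref{cor:periodic_set}. The only point requiring care is producing a nonzero tangent vector to $\fix(\phi^m)$ at ${\bf 0}$. This can be done either with the analytic arc above, or with any nonzero vector of the tangent cone $C_{\bf 0}(\fix(\phi^{m}))$, which is pointwise fixed by $D_{\bf 0}\phi^{m}$.
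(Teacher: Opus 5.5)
Your proposal is correct and is the paper's own argument: Corollary \ref{cor:periodic_set}, combined with $\mathfrak{T}_{\langle \phi \rangle} = \fix(\phi^{\mm(\phi)})$, shows that $\phi^{\mm(\phi)}$ has a positive-dimensional fixed point germ, which forces $1 \in \spec(D_{\bf 0}\phi^{\mm(\phi)})$ and hence an $\mm(\phi)$-th root of unity in $\spec(D_{\bf 0}\phi)$. The only difference is that you justify, via an analytic arc (or the tangent cone), the standard fact that a positive-dimensional fixed point set yields a fixed vector of $D_{\bf 0}\phi^{\mm(\phi)}$, whereas the paper states this without proof.
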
 
    Corollary \ref{cor:eigen} and Remark \ref{rem:finite_group_indiv} immediately imply Corollary \ref{cor:main_eigen}.
\begin{rem}
    \label{rem:just_1_eigen}
    Corollary \ref{cor:eigen} is sharp since there are finite orbits germs of biholomorphism in $\diff{}{n}$, for $n \geq 2$, 
    such that the number of eigenvalues (counted with multiplicity) of $D_{\bf 0} \phi$
    that are roots of unity is equal to $1$
    \cite[Theorem 1]{Lisboa-Ribon:fixed-point}.
\end{rem}

\section{General finite orbits groups} 
\label{sec:general}
We show in this section Theorems \ref{thm:main_g} and \ref{thm:main_g_fg}. 

\subsection{The torsion locus for general groups}
First, we introduce analogues  for the case where we do not require the group $G$ to have 
finite orbits. Obviously such a property will be required to guarantee that $\fg$ has positive dimension.

\begin{defi}
    \label{def:self}
    Given a subgroup $G$ of self-maps, we define $\fix (G) = \cap_{\phi \in G} \fix (\phi)$.
\end{defi}
\begin{pro}
    \label{pro:fg}
    Let $G$ be a subgroup of $\diffh{}{n}$.
    Then $\fg$ is $G$-invariant and $G_{|\fg}$ is a virtually abelian group consisting of finite order elements.
    Moreover $\fg$ is maximal among the formal varieties $V$
    such that $V$ is $G$-invariant and 
    $G_{|V}$ consists of finite order elements.
\end{pro}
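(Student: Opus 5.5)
Let $I \subset \Oh_{n}$ be the ideal of $\fg$. Concretely, $I$ is the ideal generated by $\{ f\circ\phi - f : f \in \Oh_{n},\ \phi \in G,\ \mm(\phi)=1\}$; the corresponding formal variety is $\fg$. I would prove the three assertions in this order: invariance, structure of $G_{|\fg}$, maximality.

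\emph{Invariance.} The set $S=\{\phi\in G: \mm(\phi)=1\}$ is stable under conjugation in $G$. Indeed, $\mm$ depends only on $\spec(D_{\bf 0}\phi)$ by Lemma \ref{lem:connected} and Definition \ref{def:embed}, and the spectrum is a conjugacy invariant. Given $\eta\in G$ and $\phi\in S$, we have
\[ (f\circ\phi-f)\circ\eta=(f\circ\eta)\circ(\eta^{-1}\phi\eta)-f\circ\eta . \]
This lies in $I$ because $\eta^{-1}\phi\eta\in S$. Hence $I\circ\eta\subset I$. Applying the same argument to $\eta^{-1}$ gives $I\circ\eta=I$, so $\fg$ is $G$-invariant.

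\emph{Finite order.} Take $\phi\in G$ and set $m=\mm(\phi)$. By Remark \ref{rem:cond_con} we have $\mm(\phi^{m})=1$, so $\phi^{m}\in S$. Thus $\phi^{m}\in\mathcal{F}_I$, i.e. $\phi^{m}$ restricts to the identity on $\fg$, and $\phi_{|\fg}$ has finite order dividing $m$.

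\emph{Virtually abelian.} Let $H=G\cap\overline{G}_0$, a finite index normal subgroup of $G$ by Remark \ref{rem:conn_char}. I claim that $\overline{G}_0\cap\mathcal{I}_I$ acts trivially on $\fg$. Set $K=\overline{G}\cap\mathcal{I}_I$, which is pro-algebraic by Proposition \ref{pro:inv_closed} and contains $G$. Hence $K=\overline{G}$, and the restriction map from $\overline{G}$ to $\mathrm{Aut}(\Oh_n/I)$ is well defined. For each $k$, the level $\overline{G}_{k,0}$ acts algebraically on $\hat{\mathfrak m}_n/(\hat{\mathfrak m}_n^{k+1}+I)$. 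Its image is a connected algebraic group that contains the dense images of the elements $\phi_k$, $\phi\in H$, and every such element has finite order there. A connected linear algebraic group in which a Zariski-dense subgroup consists of torsion elements is trivial. To justify this, write any element of $H$ as $\phi$ with $\mm(\phi)=1$ via Remark \ref{rem:cond_con}, since $\phi^{m}$ generates the same closure. Those elements act trivially by the previous step, and by Remark \ref{rem:cond_con} again $\phi \in \overline{\langle\phi\rangle}_0$. Consequently $H$ acts trivially on $\fg$, and $G_{|\fg}$ is a quotient of $G/H$.

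If that quotient is finite, it is in particular virtually abelian, which would be more than claimed. This makes me suspect the argument above is wrong. The true statement is presumably only that $\overline{G}_0$ restricted to $\fg$ is abelian, coming from Remark \ref{rem:closure_structure} applied to the restricted group. If the finiteness argument fails, I would fall back to that route: show that the restriction of $\overline{G}_0$ to $\fg$ is a pro-algebraic group of finite order elements, hence a group of semisimple elements whose connected pieces are tori, hence abelian. This is the main obstacle.

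\emph{Maximality.} Let $V$ be $G$-invariant with $G_{|V}$ torsion, and let $J$ be its ideal. For $\phi\in S$, some power $\phi^{r}$ lies in $\mathcal{F}_J$. Corollary \ref{cor:per_to_fix} then gives $\phi=\phi^{\mm(\phi)}\in\mathcal{F}_J$. Therefore $V\subset\fix(\phi)$ for every $\phi\in S$, and so $V\subset\fg$.
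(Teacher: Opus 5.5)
Your invariance argument is correct and more elementary than the paper's. The paper gets invariance from $\fg=\fix(G_0^{\sharp})$ and the normality of $G_0^{\sharp}$ in $\overline{G}$; you only use that $\mm$ depends on $\spec(D_{\bf 0}\phi)$, which is a conjugacy invariant. Your finite-order and maximality steps are the same as the paper's.

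The virtually abelian step, however, has a genuine gap. The claim that $H=G\cap\overline{G}_0$ acts trivially on $\fg$ is false in general, for two reasons.
\begin{itemize}
\item The assertion that a connected linear algebraic group with a Zariski-dense torsion subgroup is trivial already fails for $\mathbb{C}^{*}$ and its roots of unity.
\item The justification that every $\phi\in H$ may be taken with $\mm(\phi)=1$ confuses $\overline{G}_0$ with $\overline{\langle\phi\rangle}_0$. Remark \ref{rem:cond_con} gives $\overline{\langle\phi^{m}\rangle}=\overline{\langle\phi\rangle}_0$, not $\overline{\langle\phi\rangle}$, and $\phi\in\overline{G}_0$ does not imply $\phi\in\overline{\langle\phi\rangle}_0$.
\end{itemize}
The paper's own example in Remark \ref{rem:inf_simple} is a counterexample. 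For $P$, generated by the germs $z\mapsto\lambda_k z$, one has $\overline{P}=\overline{P}_0=\{z\mapsto cz : c\in\mathbb{C}^{*}\}$, so $H=P$. Yet $\mm(\lambda_k z)=2^{k-1}$, $\mathfrak{T}_P=(\mathbb{C},{\bf 0})$, and $P$ acts faithfully on it with infinite image.

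Your fallback fails on the same example. The restriction of $\overline{P}_0$ to $\mathfrak{T}_P$ contains the non-torsion maps $z\mapsto cz$, so it is not a group of finite order elements. Invoking Remark \ref{rem:closure_structure} for $G_{|\fg}$ would be circular, since it presupposes virtual abelianness. Your finiteness conclusion is exactly what holds when $G$ is finitely generated (Proposition \ref{pro:fgfg}), but not in general.

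You need a genuinely structural input here. The paper uses the Main Theorem of \cite{Ribon:Zariski-closure}:
\begin{itemize}
\item the Krull closure $G_0^{\sharp}$ of $\langle\overline{\langle\phi\rangle}_0 : \phi\in G\rangle$ is a normal pro-algebraic subgroup of $\overline{G}$;
\item $\overline{G}/G_0^{\sharp}$ is isomorphic to a virtually abelian d-group.
\end{itemize}
Since $\mathcal{F}_I$ is Zariski-closed, hence Krull-closed, one gets $G_0^{\sharp}<\mathcal{F}_I$. Therefore $\fg=\fix(G_0^{\sharp})$, and the action of $\overline{G}$ on $\fg$ factors through $\overline{G}/G_0^{\sharp}$.

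A more elementary repair is also available. Let $R=\Oh_{n}/I$ and $\mathfrak{n}=\hat{\mathfrak m}_{n}/I$.
\begin{itemize}
\item A finite-order element of $G_{|\fg}$ acting trivially on $\mathfrak{n}/\mathfrak{n}^{2}$ is unipotent on each $R/\mathfrak{n}^{k+1}$.
\item Being unipotent and of finite order, it is the identity on each $R/\mathfrak{n}^{k+1}$, hence on $R$ by Krull's intersection theorem.
\item So $G_{|\fg}$ embeds anti-isomorphically, as a torsion group, into $\mathrm{GL}(\mathfrak{n}/\mathfrak{n}^{2})$, which has dimension at most $n$.
\end{itemize}
The Jordan--Schur theorem then makes $G_{|\fg}$ virtually abelian.
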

\begin{proof}
We define $G_{0}^{\natural} = \langle \overline{\langle \phi \rangle}_{0} : \phi \in G \rangle = \langle \overline{\langle \phi \rangle} : \phi \in G, \, \mm(\phi)=1 \rangle$ 
(Remark \ref{rem:cond_con}) and its closure $G_{0}^{\sharp}$
in the Krull topology (cf. Definition \ref{def:krull}).
The Main Theorem of \cite{Ribon:Zariski-closure} states that, even if $G_{0}^{\sharp}$ is not necessarily the Zariski-closure of $G$, it is 
not that far away since  $G_{0}^{\sharp}$ is a pro-algebraic normal subgroup of $\overline{G}$
whose codimension in $\overline{G}$ is finite. More precisely, 
there exists a short exact sequence such that 
\begin{equation}
\label{equ:exact}
 0 \to G_{0}^{\sharp} \xhookrightarrow{}   \overline{G} \stackrel{ \tau_1\circ \hat{\pi}_1}{\longrightarrow} \overline{G}_{1}/  (G_{0}^{\sharp})_1 \to 0 
 \end{equation}
and $\overline{G}_{1}/  (G_{0}^{\sharp})_1$ is isomorphic to a virtually abelian matrix d-group (i.e. consisting only of diagonalizable elements).
This exact sequence is the main ingredient to show that  $G_{|\fg}$ is virtually abelian.

The definitions of $G_{0}^{\sharp}$ and $G_{0}^{\natural}$ imply 
$\fix (G_{0}^{\sharp}) \subset \fix (G_{0}^{\natural}) \subset \fg$.
Consider the ideal $I = I(\fg)$ of $\Oh_{n}$.
Since $\phi \in \mathcal{F}_{I}$ for any $\phi \in G$ with $\mm (\phi)=1$  by Corollary \ref{cor:per_to_fix}, 
and $\mathcal{F}_{I}$ is a Zariski-closed group, we deduce
that  $G_{0}^{\natural} < \mathcal{F}_{I}$.
In particular, we obtain  $G_{0}^{\sharp} < \mathcal{F}_{I}$ since every Zariski-closed group is Krull-closed 
(Remark \ref{rem:zar_imp_krull}).
Therefore $\fg$ is contained in $\fix (G_{0}^{\sharp})$ and thus $\fg = \fix (G_{0}^{\sharp})$.
Since $G_{0}^{\sharp}$ is a Zariski-closed normal subgroup of $\overline{G}$, it follows that
$\fix (G_{0}^{\sharp})$ is $\overline{G}$-invariant. In particular $\fg$ is $G$-invariant.

Consider a formal variety $V$, or equivalently an ideal $J$ of $\Oh_{n}$, such that $V$ is $G$-invariant and 
$G_{|V}$ consists of finite order elements.
Given $\phi \in G$,  
there exists $r \in \nn$ such that $\phi^{r} \in \mathcal{F}_{J}$ by hypothesis.
We obtain $\phi^{\mm (\phi)} \in \mathcal{F}_{J}$ by Corollary \ref{cor:per_to_fix}. 
Therefore $V$ is contained in $\fg = \cap_{\phi \in G, \, \mm (\phi)=1} \fix (\phi)$.

By the exact sequence \eqref{equ:exact} and since the action of $\overline{G}$ on $\fg$ factors through $\overline{G}/ G_{0}^{\sharp}$, 
we deduce that $G_{|\fg}$ is  virtually abelian.
Finally $G_{|\fg}$ consists of finite order elements since 
$\phi^{\mm (\phi)} \in G_{0}^{\natural}$ for any $\phi \in G$.
\end{proof}
We can be much more precise in the finitely generated case. 

\begin{pro}
    \label{pro:fgfg}
     Let $G$ be a finitely generated subgroup of $\diffh{}{n}$.
     Then $\fg$ is $G$-invariant, $G_{|\fg}$ is a finite group and  
     $\fg = \fix (\overline{G}_{0} \cap G)$. 
\end{pro}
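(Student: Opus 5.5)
The strategy is to first get finiteness of $G_{|\fg}$ from Proposition \ref{pro:fg} plus finite generation, then derive the equality $\fg = \fix (\overline{G}_{0} \cap G)$ by proving the two inclusions. The first inclusion is obtained by passing to Zariski-closures, and the second follows from the maximality statement in Proposition \ref{pro:fg}.

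\emph{Step 1 (invariance and finiteness).} The $G$-invariance of $\fg$ is already given by Proposition \ref{pro:fg}. The same proposition says that $G_{|\fg}$ is virtually abelian and consists of finite order elements. Since $G$ is finitely generated, so is its image $G_{|\fg}$. A finite index subgroup $A$ of $G_{|\fg}$ that is abelian is again finitely generated, being of finite index in a finitely generated group. It is also torsion, so $A$ is a finitely generated torsion abelian group and hence finite. Therefore $G_{|\fg}$ is finite.

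\emph{Step 2 ($\fg \subset \fix (\overline{G}_{0} \cap G)$).} Let $I = I(\fg) \subset \Oh_{n}$. Set $K = G \cap \mathcal{F}_{I}$, the kernel of the restriction map $G \to G_{|\fg}$. By Step 1, $K$ is a finite index normal subgroup of $G$. By Remark \ref{rem:fin_ind}, $\overline{K}$ is a finite index subgroup of $\overline{G}$ and therefore contains $\overline{G}_{0}$. On the other hand, $\mathcal{F}_{I}$ is Zariski-closed by Proposition \ref{pro:fix_closed} and contains $K$, so it contains $\overline{K}$. Combining these gives
\[ \overline{G}_{0} \cap G \subset \overline{G}_{0} \subset \mathcal{F}_{I} , \]
that is, every element of $\overline{G}_{0} \cap G$ fixes $\fg$ pointwise (formally). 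This yields $\fg \subset \fix (\overline{G}_{0} \cap G)$.

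\emph{Step 3 ($\fix (\overline{G}_{0} \cap G) \subset \fg$).} Put $H = \overline{G}_{0} \cap G$. It is normal in $G$, because $\overline{G}_{0}$ is normal in $\overline{G}$ (Remark \ref{rem:g0}). It has finite index $|G:H| = |\overline{G}:\overline{G}_{0}|$ by Remark \ref{rem:conn_char}. Normality implies that $V = \fix (H)$ is $G$-invariant, since $\phi (\fix (H)) = \fix (\phi H \phi^{-1}) = \fix (H)$. The action of $G$ on $V$ factors through the finite group $G/H$, so $G_{|V}$ consists of finite order elements. The maximality part of Proposition \ref{pro:fg} then gives $V \subset \fg$.

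Steps 2 and 3 together give $\fg = \fix (\overline{G}_{0} \cap G)$. The only delicate point is Step 2, specifically the passage from the finite-index kernel $K$ to the whole connected component $\overline{G}_{0}$. This rests on combining Remark \ref{rem:fin_ind}, which says that finite index subgroups have closures containing $\overline{G}_{0}$, with the Zariski-closedness of $\mathcal{F}_{I}$. Everything else is formal.
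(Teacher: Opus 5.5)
Your proof is correct, but one of the two inclusions is proved differently from the paper. Steps 1 and 2 follow the paper's argument. Finiteness of $G_{|\fg}$ comes from Proposition~\ref{pro:fg} plus finite generation. The inclusion $\fg \subset \fix(\overline{G}_{0}\cap G)$ comes from the kernel $\mathcal{F}_{I}\cap G$: its Zariski-closure contains $\overline{G}_{0}$ by Remark~\ref{rem:fin_ind} and lies in the pro-algebraic group $\mathcal{F}_{I}$.

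You differ on the inclusion $\fix(\overline{G}_{0}\cap G)\subset\fg$. The paper gets it directly from the definition of $\fg$. If $\phi\in G$ satisfies $\mm(\phi)=1$, then $\phi\in\overline{\langle\phi\rangle}=\overline{\langle\phi\rangle}_{0}<\overline{G}_{0}$ by Remark~\ref{rem:cond_con1}. So every diffeomorphism in the intersection defining $\fg$ already lies in $\overline{G}_{0}\cap G$.

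You instead argue that $H=\overline{G}_{0}\cap G$ is normal of finite index (Remarks~\ref{rem:g0} and~\ref{rem:conn_char}). Hence $\fix(H)$ is $G$-invariant and $G$ acts on it through the finite group $G/H$, and you then invoke the maximality part of Proposition~\ref{pro:fg}. This is valid, with one caveat. You need maximality in the strong sense that $\fg$ \emph{contains} every $G$-invariant formal variety on which $G$ acts by finite order elements. The statement only says ``maximal'', but the paper's proof of Proposition~\ref{pro:fg} does establish the strong form, via Corollary~\ref{cor:per_to_fix}.

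The paper's route is shorter. It uses only that identity components are monotone under $\overline{\langle\phi\rangle}\subset\overline{G}$, and needs neither normality nor finite index. Your route recycles the maximality statement and is more conceptual. It shows that the fixed locus of \emph{any} finite index normal subgroup of $G$ lies in $\fg$, so the specific choice of $\overline{G}_{0}\cap G$ matters only for the reverse inclusion of your Step~2.
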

\begin{proof}
The formal variety $\fg$ is $G$-invariant and $G_{|\fg}$ is virtually abelian by Proposition \ref{pro:fg}.
Let $J$ be an  finite index normal  abelian  subgroup of $G_{|\fg}$; it is finitely generated.
Since $J$ is a finitely generated abelian group of finite order elements, $J$ is finite. Therefore, 
$G_{|\fg}$ is finite.

Denote $\mathfrak{T} = \fix (\overline{G}_{0} \cap G)$.
We obtain  $\mathfrak{T} \subset \fg$ since 
\[ \phi \in \overline{\langle \phi \rangle} = {\overline{\langle \phi \rangle}}_0 < \overline{G}_0 \]
for any $\phi \in G$ with $\mm (\phi)=1$.
Denote $I = I(\fg)$. 
Since $G_{|\fg}$ is finite, the group $\mathcal{F}_{I} \cap G$ is a finite index normal subgroup of $G$.
As, a consequence the Zariski-closure $\overline{\mathcal{F}_{I} \cap G}$ is a 
finite index normal subgroup of $\overline{G}$
that contains $\overline{G}_0$ by Remark \ref{rem:fin_ind}. In particular, $\overline{G}_0$ is a subgroup of 
$\overline{\mathcal{F}_{I}} = \mathcal{F}_{I}$ since $\mathcal{F}_{I}$
is pro-algebraic (Proposition \ref{pro:fix_closed}). This implies $\fg \subset \mathfrak{T}$ and thus $\mathfrak{T} = \fg$.
\end{proof}

\begin{rem}
    Proposition \ref{pro:fg} implies that if $V$ is a a formal variety that is $G$-invariant, for 
    some $G < \diffh{}{n}$, and $G_{|V}$ consists of finite order elements then  $G_{|V}$ is virtually abelian.
    It is finite if $G$ is finitely generated by Proposition \ref{pro:fgfg}.
\end{rem}

 \subsection{The torsion locus for finite orbits groups}
 Now, that we already considered the general case, let us focus on the case of groups with finite orbits. 
 The proof of Theorem \ref{thm:main_g} is based on the following points: 
 $G$ has a strong algebraic structure (cf. Proposition \ref{pro:virt_solv}), 
 the results on cyclic groups in section \ref{sec:cyclic} and the theory of pro-algebraic subgroups of $\diffh{}{n}$.
 The first two ingredients are of topological nature, whereas the last one is algebraic. 
 
 
\begin{defi}
    Let $G$ be a group. We define the $0$-th derived subgroup $G^{0} =G$ of $G$ and then, recursively, 
    the $l$-{\it th derived subgroup} 
    \[ G^{l} = [G^{l-1}, G^{l-1}] = \langle [\phi, \psi] : \phi, \psi \in G^{l-1} \rangle \] 
    for any $l \in \zz_{\geq 1}$, where $[\phi, \psi] = \phi \psi \phi^{-1} \psi^{-1}$ is the commutator of $\phi$ and $\psi$.
    The group $G^{1}$ is called the {\it derived group} of $G$ and is also denoted by $G'$.
    We say that $G$ is {\it solvable} if there exists $l \in \zz_{\geq 0}$ such that $G^{l} = \{1\}$.
    In such a case, the minimum $l \in  \zz_{\geq 0}$ such that $G^{l} = \{1\}$ is called the 
    {\it derived length} of $G$. 
\end{defi} 
\begin{pro}
    \label{pro:virt_solv}
     Let $G$ be a subgroup of $\diff{}{n}$ that has the finite orbits property. Then $G$ is virtually solvable.
\end{pro}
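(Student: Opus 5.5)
The plan is to reduce to the linear part and use the Tits alternative. Consider the homomorphism $G \to \mathrm{GL}(n,\C)$, $\phi \mapsto D_{\bf 0}\phi$, and the analogous homomorphisms $G \to G_k \subset \mathrm{GL}(\hat{\mathfrak m}_n/\hat{\mathfrak m}_n^{k+1})$. Since $\overline{G}$ is the inverse limit of the $\overline{G}_k$, it suffices to show two things. First, $G_k$ is virtually solvable for every $k$, with a solvable finite index subgroup whose index and derived length are bounded independently of $k$. Second, the kernel of $G \to G_1$, i.e. the tangent-to-identity part, is solvable. The second point is the delicate one, so I would organize the argument around a uniform bound.

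\textbf{Step 1.} By the Tits alternative (valid for arbitrary linear groups in characteristic $0$), each $G_k$ is either virtually solvable or contains a non-abelian free subgroup. I will show the second option is impossible. Suppose $\phi,\psi \in G$ have images generating a free group $F_2$ in $G_k$. By Remark \ref{rem:finite_group_indiv} and Corollary \ref{cor:periodic_set}, every element $\eta$ of the finitely generated group $H = \langle \phi,\psi\rangle$ has $\dim \mathfrak{T}_{\langle \eta \rangle} \ge 1$. Moreover $H$ inherits the finite orbits property.

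The tool I would use is Corollary \ref{cor:pos_inter}, applied inductively. Take $W_0 = (\C^n,{\bf 0})$ and $W_1 = \fix(\phi^{\mm(\phi)})$, which is $\phi$-invariant. Then intersect with torsion loci of further elements, passing to finite index subgroups so that the intersections stay invariant. This should produce a positive-dimensional germ $W$ that is invariant under a finite index subgroup $H'$ of $H$ and on which $H'$ acts by finite order elements. By Proposition \ref{pro:fgfg}, that action is finite on $\mathfrak T_{H'}$. The difficulty is that Corollary \ref{cor:pos_inter} needs $W$ invariant under each new element, and the intersection of fixed loci of non-commuting elements need not be invariant. To handle this, I would use the Noetherian property (descending chains of germs stabilize) together with the $G$-invariance of $\fix(\overline{G}_0\cap G)$ from Proposition \ref{pro:fgfg}.

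\textbf{Step 2.} Rather than fighting with $F_2$ directly, I would prove a stronger intermediate claim. For any finitely generated finite orbits group $H$, the restriction map $H \to \mathrm{Diff}(\mathfrak T_H)$ has image a finite group and $\dim \mathfrak T_H \ge 1$. This is by induction on $n$. Restricting to $\mathfrak T_H$ (of dimension $<n$ unless $H$ consists of finite order elements) and to its complement direction via the normal bundle should allow induction. The kernel $K$ of the action on $\mathfrak T_H$ is then a finite index subgroup fixing a positive-dimensional germ pointwise. Linearizing transversally along $\mathfrak T_H$ gives a representation of $K$ into a group of matrices depending on the points of $\mathfrak T_H$. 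Finite orbits at nearby points, together with Remark \ref{rem:stable_manifold}, force eigenvalues in $S^1$, which I would use to control $K$.

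\textbf{Main obstacle.} The key obstacle is excluding free subgroups: showing that a free group $\langle\phi,\psi\rangle$ in $\diff{}{n}$ cannot have finite orbits. The first thing I would try is a ping-pong argument. I would aim to produce points with infinite orbits near ${\bf 0}$ when the linear parts (or jets) play ping-pong on projective space, then contradict finiteness of orbits. Solvability of the unipotent (tangent to identity) part would follow from the fact that Zariski-closed groups of unipotent formal diffeomorphisms are pro-unipotent, so their solvability reduces to that of the $G_k$. The uniform bound on derived length, needed to pass to the inverse limit (Remark \ref{rem:closure_structure}), should come from bounds on virtually solvable linear groups of fixed dimension $n$ acting on the first jet, since the higher jet parts form unipotent groups.
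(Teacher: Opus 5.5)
\textbf{There is a genuine gap: your reduction to jets fails exactly in the hard case, and the dynamical ingredient that turns non-solvability into infinite orbits is missing.}

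\textbf{The jet reduction.} For every $k$, the kernel of $G_k \to G_1$ acts trivially on $\hat{\mathfrak m}_n/\hat{\mathfrak m}_n^{2}$, hence trivially on every graded piece $\hat{\mathfrak m}_n^{j}/\hat{\mathfrak m}_n^{j+1}$. So this kernel is unipotent, hence nilpotent. Consequently every $G_k$ is virtually solvable as soon as $G_1$ is, and level-wise information says nothing about the tangent-to-the-identity part. The derived lengths of the $G_k$ are, however, unbounded in $k$ for any non-solvable group of tangent-to-the-identity germs, and such groups already exist in $\diff{}{}$. Two of your claims are therefore false: that solvability of a pro-unipotent group ``reduces to that of the $G_k$'', and that the uniform bound on derived length comes from the first jet. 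In particular, the Tits alternative applied to jets can never detect a free group of tangent-to-the-identity elements, which is precisely the case you must exclude.

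\textbf{Ping-pong.} Ping-pong does not rescue this. Finite orbits forces $\spec(D_{\bf 0}\phi)\subset S^1$ (Remark \ref{rem:stable_manifold}). So there are no proximal elements to play ping-pong on projective space; free subgroups of $\mathrm{SU}(2)$ satisfy this constraint. For tangent-to-the-identity elements the linear parts are trivial anyway.

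\textbf{Steps 1 and 2.} These go the wrong way relative to what is available. Positive dimension of the torsion locus is obtained in the paper (Lemmas \ref{lem:exist_infinite}, \ref{lem:red_dim} and Proposition \ref{pro:main}) only by using solvability. Solvability is exactly what keeps intersections of fixed point sets invariant, which is the obstruction you identified yourself. Your Step 2 is Theorem \ref{thm:main_g_fg}, and ``induction on $n$ via the normal bundle'' followed by ``control $K$'' is not an argument for it. Finally, even granting a positive-dimensional invariant germ on which a finite index subgroup acts finitely, there is no contradiction with freeness: a free group can act trivially on such a germ.

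\textbf{The missing idea.} The paper argues by contradiction with a Zassenhaus-type lemma in the spirit of Ghys (Theorem 1, case 2 of \cite{JR:solvable3rec}). Since $G\subset\diff{<\infty}{n}$, the spectra of the linear parts lie in $S^1$. If in addition $G$ is not virtually solvable, there are finitely many $\phi_1,\dots,\phi_k\in G$ with the following property: for any choice of representatives, the pseudogroup they generate contains maps $\psi_m\neq\mathrm{id}$ (iterated commutators) defined on a fixed connected neighborhood $V$ of ${\bf 0}$ and converging uniformly to $\mathrm{id}_V$. Take $z\in V$ outside the countable union of the proper analytic sets $\fix(\psi_m)$. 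Then the points $\psi_m(z)\neq z$ accumulate at $z$, so the orbit of $z$ is infinite, contradicting the finite orbits property. Nothing in your proposal plays this role.
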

\begin{proof}
Aiming at a contradiction, assume that $G$ is not virtually solvable. This property motivates us to 
apply a version of Zassenhauss Lemma to obtain commutators of elements of $G$ that 
are as close to identity as desired (cf. \cite{Ghys-identite}).

We have $G \subset \diff{<\infty}{n}$ by Remark \ref{rem:finite_group_indiv}.
This implies $\spec (D_{\bf 0} \phi) \subset S^{1}$ for any $\phi \in G$ by Remark \ref{rem:stable_manifold}.
This property plus $G$ being non-virtually solvable allow to apply 
Theorem 1 (case 2) in \cite{JR:solvable3rec}, the result providing the commutators close to identity.
Indeed, there exists a finite subset $\mathcal{S} = \{ \phi_1, \hdots, \phi_k \}$ of $G$ such that 
the pseudogroup $\mathcal{G}$ generated by 
\[ \{ \phi_{j}: U_j \to V_j : 1 \leq j \leq k\} \] 
has infinite orbits for any  choice  of representatives  $\phi_{j}: U_j \to V_j$  for $1 \leq j \leq k$.
More precisely, there exists a connected open neighborhood $V$ of ${\bf 0}$ and a sequence
${\{ \psi_n \}}_{n \geq 1}$ in $\mathcal{G} \setminus \{ \mathrm{id}_V \}$ of diffeomorphisms defined in $V$ and such that 
${\{ \psi_n \}}_{n \geq 1}$ converges uniformly to $\mathrm{id}_V$ in $V$. Thus, the generic $\mathcal{G}$-orbit of a point in 
$V$ is recurrent and hence infinite, contradicting the hypothesis.
\end{proof}

Let $G$ be a finite orbits subgroup of $\diff{}{n}$. 
In order to show Theorem \ref{thm:main_g}, we build a germ of analytic variety such that $V$ is $G$-invariant, $G_{|V}$
consists of finite order elements and $\dim (V) \geq 1$. Since $V \subset \fg$, it follows that $\fg$ is non-trivial.
Such a $V$ will be constructed as an intersection of fixed point sets
of infinite order elements, whose existence is guaranteed by next lemma. 
 
\begin{lemma}
    \label{lem:exist_infinite}
     Let $G$ be a solvable subgroup of $\diff{}{n}$ that has finite orbits and satisfies $\overline{G} = \overline{G}_0$.
     Assume that $W$ is a germ of analytic variety at ${\bf 0}$ that is $G$-invariant. 
     Denote by $\ell$ the derived length of $G_{|W}$ and suppose $\ell \geq 2$. Then, there are elements of infinite order in 
     $G_{|W}^{\ell -1}$.
\end{lemma}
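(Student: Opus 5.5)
We argue by contradiction: assume every element of $G_{|W}^{\ell-1}$ has finite order. Since $\ell$ is the derived length of $G_{|W}$, the group $H := G_{|W}^{\ell-1}$ is a non-trivial abelian group, and $H = (G^{\ell-1})_{|W}$ because restriction to the invariant germ $W$ commutes with taking commutators. Let $I = I(W)$. We regard $G_{|W}$ as the quotient $G/(G \cap \mathcal{F}_I)$, acting on $\Oh_n / I$ (or on the jets $\hat{\mathfrak m}_n/(\hat{\mathfrak m}_n^{k+1}+I)$). The strategy is to show that a torsion abelian group $H$ of this type, sitting as the last non-trivial derived subgroup of a group whose Zariski-closure is connected, must act trivially on $W$. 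That contradicts $\ell \geq 2$, since it would force $G_{|W}^{\ell-1} = \{\mathrm{id}\}$.

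The first step is to show that torsion elements of $H$ are trivial on $W$. Take $\eta \in G^{\ell-1}$ with $\eta_{|W}$ of finite order $r$, so $\eta^{r} \in \mathcal{F}_I$. Corollary \ref{cor:per_to_fix} gives $\eta^{\mm(\eta)} \in \mathcal{F}_I$, so it suffices to show $\mm(\eta) = 1$, i.e. $\eta \in \overline{\langle \eta \rangle}_0$. Since $G^{\ell-1} < G'$ and $\overline{G} = \overline{G}_0$ is connected, commutators lie in $\overline{G}_0' = \overline{G}'$. For a connected solvable pro-algebraic group, the derived subgroup is unipotent: at each finite jet level $\overline{G}_{k}$ is a connected solvable linear algebraic group, so by Lie–Kolchin its commutator subgroup is unipotent. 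Hence every $\eta \in G'$ is formally unipotent, meaning $D_{\bf 0}\eta$ is unipotent and all $\eta_k$ are unipotent. Lemma \ref{lem:connected} then gives $\mm(\eta) = 1$, since all eigenvalues equal $1$. Therefore $\eta \in \mathcal{F}_I$, i.e. $\eta_{|W} = \mathrm{id}$. Applying this to every element of $G^{\ell-1}$ yields $G_{|W}^{\ell-1} = \{\mathrm{id}\}$, contradicting the definition of $\ell$.

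The main obstacle is the unipotence step. One must check that $\overline{G}$ is solvable, using Remark \ref{rem:closure_structure} with $G$ solvable, and that $\overline{G}=\overline{G}_0$ implies each $\overline{G}_k$ is connected, which follows from the definition of $\overline{G}_0$ via the projections. One then applies Lie–Kolchin at every level $k$ to conclude that each $\eta_k$ is unipotent for $\eta \in G'$. If the argument only through $G'$ turns out to be delicate, for instance for the restriction to $W$ rather than to the whole space, I would work directly with Corollary \ref{cor:per_to_fix} applied to $\mathcal{F}_I$ as above, since that formulation handles $W$ automatically.

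Note that this argument seems not to use the finite orbits hypothesis, beyond guaranteeing that we are in the setting where such $G$ arise. If the intended statement needs it, the place to use it would be Remark \ref{rem:stable_manifold}, which ensures eigenvalues on $S^1$ so that the linear parts of the group are controlled.
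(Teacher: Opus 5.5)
Your proof is correct and follows the paper's argument. Lie--Kolchin applied to the connected solvable Zariski closure shows that elements of $G^{\ell-1} \subset G'$ have unipotent linear part, so $\mm(\eta)=1$ by Lemma \ref{lem:connected}. Corollary \ref{cor:per_to_fix} then forces any element of $G^{\ell-1}$ that has finite order on $W$ to fix $W$ pointwise. The only cosmetic difference is that you run Lie--Kolchin on every jet group $\overline{G}_k$, whereas only the linear level is needed, i.e. $\overline{D_{\bf 0} G}$, which is connected by Remark \ref{rem:conn_1}. You are also right that the finite orbits hypothesis plays no role in this lemma.
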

Note that $\ell$ is the minimum $l \in \zz_{\geq 0}$ such that $W$ is contained in $\fix (G^{l})$.
\begin{proof}
Given a subgroup $H$ of $\diffh{}{n}$, the condition $\overline{H} = \overline{H}_0$ is equivalent
to $\overline{D_{\bf 0} H} = \overline{D_{\bf 0} H}_{0}$ (Remark \ref{rem:conn_1}).  
We deduce that $\overline{D_{\bf 0} G}$ is a connected algebraic matrix group. 
Since $G$ is solvable, $D_{\bf 0} G$ is also solvable. 
Therefore $\overline{D_{\bf 0} G}$ is solvable \cite[section I.2.4, Corollary 1]{Borel}. 
Thus, Lie-Kolchin's Theorem (cf. \cite[section 17.6, p. 113]{Humphreys})
implies that there exists a linear change of coordinates such that all elements of  $D_{\bf 0} G$ are
upper triangular. Since $\ell -1 \geq 1$, we obtain that every element of $G^{\ell -1}$ is unipotent.

Let $\phi \in G^{\ell -1}$ such that $\phi_{|W}$ has finite order. 
In particular, there exists $r \in \nn$ such that 
$\phi^{r} \in \mathcal{F}_{I(W) }$. Since the index of embeddability of $\phi$ is equal to $1$ 
(cf. Lemma \ref{lem:connected}), it follows that  
$(W, {\bf 0}) \subset \fix (\phi)$ by Corollary \ref{cor:per_to_fix}. So there is an element of infinite order in $G_{|W}^{\ell -1}$
since otherwise  $G_{|W}^{\ell -1}$ is trivial and hence the derived length of 
$G_{|W}$ is at most $\ell -1$.
\end{proof}

Let $G$ be a finite orbits subgroup of $\diff{}{n}$. Assume that there exists a germ $V$ of analytic variety at ${\bf 0}$
that is $G$-invariant. In order to prove Theorem \ref{thm:main_g}, there are two cases, either $G_{|V}$ consists of finite order 
elements and we are done, or the next result provides another invariant analytic variety whose dimension is positive but smaller
than $\dim (V)$, so the problem is simplified.

\begin{lemma}
    \label{lem:red_dim}
    Let $G$ be a solvable subgroup of $\diff{}{n}$ that has finite orbits.
    Assume that $W$ is an irreducible  germ of analytic variety at ${\bf 0}$ that is of positive dimension and $G$-invariant. 
    Denote by $\ell$ the derived length of $G_{|W}$ and assume $\ell \geq 1$. Consider the sets
    \[ \mathcal{A}_{G, W} = \{ \phi \in G^{\ell -1}:  \phi_{|W} \ {\it has \ infinite \ order} \} \ \mathrm{and} \  
    \mathcal{A}_{G,W}^{0} = \{ \phi \in \mathcal{A}_{G, W}  :  \mm (\phi) =1 \} . \]
    Suppose $ \mathcal{A}_{G, W} \neq \emptyset$. Then $\fix (\mathcal{A}_{G,W}^{0}) \cap W$ is a $G$-invariant germ of analytic set such that 
    $1 \leq \dim (\fix (\mathcal{A}_{G,W}^{0}) \cap W) < \dim (W)$.
\end{lemma}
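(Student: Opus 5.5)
The proof has three parts: $G$-invariance of $V := \fix (\mathcal{A}_{G,W}^{0}) \cap W$, the strict upper bound $\dim V < \dim W$, and the lower bound $\dim V \geq 1$.

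\emph{Invariance.} The group $G^{\ell -1}$ is normal in $G$, and $W$ is $G$-invariant. Hence for $\psi \in G$ and $\phi \in \mathcal{A}_{G,W}$, the element $\psi \phi \psi^{-1}$ lies in $G^{\ell-1}$, and its restriction to $W$ is conjugate to $\phi_{|W}$, so it still has infinite order. The embeddability index is a conjugacy invariant, since it depends only on the spectrum of $D_{\bf 0} \phi$. Thus $\mathcal{A}_{G,W}^{0}$ is stable under conjugation by $G$. Moreover $\fix (\psi \phi \psi^{-1}) = \psi (\fix (\phi))$, so $\fix (\mathcal{A}_{G,W}^{0})$ is $G$-invariant, and therefore so is $V$.

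\emph{Upper bound.} First I check that $\mathcal{A}_{G,W}^{0}$ is nonempty. Take $\phi \in \mathcal{A}_{G,W}$ and set $m = \mm (\phi)$. Then $\phi^{m} \in G^{\ell -1}$ and $\mm (\phi^{m}) =1$ by Remark \ref{rem:cond_con}. The restriction $(\phi^{m})_{|W}$ has infinite order, because $\phi_{|W}$ does. So $\phi^{m} \in \mathcal{A}_{G,W}^{0}$. Since $(\phi^{m})_{|W} \neq \mathrm{id}$, we get $W \not\subset \fix (\phi^{m})$. As $W$ is irreducible, $\fix (\phi^{m}) \cap W$ is a proper analytic subset of $W$, and hence $\dim V < \dim W$.

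\emph{Lower bound.} This is the main obstacle, because $V$ is an intersection over possibly infinitely many elements. My approach is to find a single element $\rho \in G^{\ell -1}$, defined using the abelianness of $G^{\ell -1}_{|W}$, and show that $\fix (\mathcal{A}_{G,W}^{0}) \cap W$ contains $\mathfrak{T}_{\langle \rho \rangle} \cap W$, which has positive dimension by Corollary \ref{cor:pos_inter}. This needs $\rho \in \diff{<\infty}{n}$ (Remark \ref{rem:finite_group_indiv}) and $W$ being $\rho$-invariant, both of which hold.

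The argument runs as follows.

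1. The group $H := (G^{\ell -1})_{|W}$ is abelian, since $G_{|W}$ has derived length $\ell$. It preserves the torsion locus $\mathfrak{T}_{H}$ computed inside $W$.

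2. By Noetherianity, $V$ is already cut out by finitely many elements $\phi_1, \hdots, \phi_k \in \mathcal{A}_{G,W}^{0}$, that is, $V = \fix (\phi_1, \hdots, \phi_k) \cap W$.

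3. Consider the abelian group $\langle \phi_{1|W}, \hdots, \phi_{k|W} \rangle$. I plan to show that $W' := \bigcap_j \fix (\phi_j) \cap W$ has positive dimension by induction on $k$. The set $W_1 := \fix (\phi_1) \cap W$ has positive dimension by Corollary \ref{cor:pos_inter}, since $\mm (\phi_1) = 1$ gives $\mathfrak{T}_{\langle \phi_1 \rangle} = \fix (\phi_1)$. Because the restrictions commute on $W$, the set $W_1$ is $\phi_2$-invariant. Applying Corollary \ref{cor:pos_inter} to $\phi_2$ and each positive-dimensional component of $W_1$ (possibly after replacing $\phi_2$ by a power that fixes the components, which keeps $\mm = 1$ and the same fixed set by Corollary \ref{cor:per_to_fix}), we get that $\fix (\phi_2) \cap W_1$ has positive dimension. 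Iterating over $j$ gives $\dim V \geq 1$.

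One point needs care: commutation holds only for restrictions to $W$, so the invariance of $\fix (\phi_i) \cap W$ under $\phi_j$ must be justified using $\phi_j \phi_i = \phi_i \phi_j$ on $W$. This is exactly what the abelianness of $H$ provides.
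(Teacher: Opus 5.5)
Your proof is correct and follows the paper's argument. Invariance comes from $G^{\ell-1}\lhd G$. Nonemptiness of $\mathcal{A}_{G,W}^{0}$ comes from $\phi^{\mm(\phi)}$, and the upper bound from $W\cap\fix(\phi_1)\neq W$ with $W$ irreducible. The lower bound comes from Noetherianity plus induction on $W\cap\fix(\phi_1)\cap\cdots\cap\fix(\phi_k)$, using Corollary \ref{cor:pos_inter} and the commutativity of $G^{\ell-1}_{|W}$. Three remarks are never used and can be dropped: the announced ``single element $\rho$'' plan, the torsion locus $\mathfrak{T}_{H}$ in step 1, and the passage to powers of $\phi_2$ fixing components (Corollary \ref{cor:pos_inter} applies directly to the $\phi_2$-invariant germ $W_1$).
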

\begin{proof}
Denote $\mathcal{A} = \mathcal{A}_{G, W}$ and $\mathcal{A}_{0} = \mathcal{A}_{G, W}^{0}$.
Note that $\phi \in \mathcal{A}$ implies $\phi^{\mm (\phi)} \in \mathcal{A}_{0}$ 
by Remark \ref{rem:cond_con} and hence $ \mathcal{A}_{0} \neq \emptyset$. 
Since $G^{\ell -1} \lhd G$, it follows that $\phi \mathcal{A} \phi^{-1} = \mathcal{A}$ and thus
$\phi \mathcal{A}_0 \phi^{-1} = \mathcal{A}_0$ for any $\phi \in G$. Therefore, 
$\fix (\mathcal{A}_{0}) \cap W$ is $G$-invariant.

By noetherianity of $\Oc_{n}$, 
there exist $\phi_1, \hdots, \phi_r \in  \mathcal{A}_{0}$ such that the germs 
$\fix (\mathcal{A}_{0})$ and $\fix (\phi_1, \hdots, \phi_r)$ at ${\bf 0}$ coincide.
Denote $F_k = W \cap \cap_{1 \leq j \leq r} \fix (\phi_j)$ for $0 \leq k \leq r$.  We have $F_0 = W$, $F_{1} \neq W$ and 
$F_{k+1} \subset F_k$ for any $0 \leq k < r$. It suffices to show 
\begin{equation}
\label{equ:fk_w}
 1 \leq \dim (F_k \cap W) < \dim (W) 
\end{equation}
for $1 \leq k \leq r$ by induction on $k$. The result for $k=1$ is a consequence of Corollary \ref{cor:pos_inter} and $F_{1} \neq W$.
Assume the result is proved for $1 \leq k < r$. 
Since  $G_{|W}^{\ell -1}$ is abelian, $F_{k} \cap W$ is $\phi_{k+1}$-invariant and hence Equation \eqref{equ:fk_w}
derives from Corollary \ref{cor:pos_inter} and $F_{k+1} \subset F_k$.
\end{proof}

Finally, we show the existence of a germ of analytic variety $V$  that
is $G$-invariant and $G_{|V}$ consists of finite order elements.

\begin{pro}
    \label{pro:main}
    Let $G$ be a subgroup of $\diff{}{n}$ that has the finite orbits property. Then $G$ is virtually solvable and 
    there exist a finite index normal subgroup $H$ of $G$ and a  germ of analytic variety $V$ at ${\bf 0}$ of positive dimension
    such that $V$ is $G$-invariant, $G_{|V}$  consists of finite order elements and  $H_{|V}$ is abelian.
\end{pro}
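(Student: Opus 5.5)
We first note that virtual solvability is Proposition \ref{pro:virt_solv}. We then pass to a convenient finite index normal subgroup. Let $G_1 = G \cap \overline{G}_0$. By Remark \ref{rem:g0} and Remark \ref{rem:conn_char}, $G_1$ is a finite index normal subgroup of $G$. We intersect it with a finite index solvable subgroup and replace the result by its normal core, obtaining a finite index normal subgroup $H_0 \lhd G$ that is solvable. By Remark \ref{rem:fin_ind}, the Zariski-closure $\overline{H_0}$ is a finite index subgroup of $\overline{G}$ and contains $\overline{G}_0$. Moreover $H_0 < \overline{G}_0$, so $\overline{H_0} = \overline{G}_0$ is connected. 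Hence $\overline{H_0} = \overline{H_0}_{0}$, and the hypotheses of Lemma \ref{lem:exist_infinite} hold for $H_0$. Clearly $H_0$ inherits the finite orbits property from $G$.

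Next we run a descending induction on dimension, starting from $W = (\C^{n}, {\bf 0})$, which is $G$-invariant. Suppose $W$ is a $G$-invariant germ of positive dimension, and let $\ell$ be the derived length of $H_{0|W}$. If $\ell \leq 1$, then $H_{0|W}$ is abelian and we stop. If $\ell \geq 2$, Lemma \ref{lem:exist_infinite} provides infinite order elements in $H_{0|W}^{\ell-1}$, so $\mathcal{A}_{H_0,W} \neq \emptyset$. Lemma \ref{lem:red_dim} then gives a smaller germ $W' = \fix (\mathcal{A}^{0}_{H_0,W}) \cap W$ with $1 \leq \dim W' < \dim W$.

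Two technical points need care here. First, Lemma \ref{lem:red_dim} requires $W$ irreducible. We handle this by working on an irreducible component $W_0$ of positive dimension, invariant under the finite index subgroup of $H_0$ stabilizing it. We would rather keep $G$-invariance, so we instead take the $G$-saturation: the union of the images $g(W')$, which is a finite union because $G$ permutes the finitely many components. Second, $\mathcal{A}^{0}$ is conjugation invariant under $G$ (not just $H_0$), because $H_0^{\ell-1}$ is characteristic in $H_0$ and hence normal in $G$, while $\mm$ is conjugation invariant. Thus $W'$ remains $G$-invariant, just as in the proof of Lemma \ref{lem:red_dim}. Since the dimension strictly drops and stays at least $1$, the process terminates at a $G$-invariant $V$ of positive dimension on which $H := H_0$ (or a finite index normal subgroup of it) restricts to an abelian group.

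It remains to get $G_{|V}$ of finite order. Rather than arranging this during the descent, we add a final step. On $V$ the group $H_{|V}$ is abelian and has finite orbits. We apply the same descent to the abelian setting: whenever some $\phi \in H$ has $\phi_{|V}$ of infinite order, we replace $V$ by $\fix(\{\psi \in H : \mm(\psi)=1, \psi_{|V} \text{ infinite order}\}) \cap V$. Corollary \ref{cor:pos_inter} keeps this of positive dimension, and commutativity keeps each successive fixed set invariant under the next element. The $G$-invariance is preserved as before, by normality of $H$. Once all elements of $H_{|V}$ have finite order, every element of $G_{|V}$ has a power in $H_{|V}$, so it also has finite order.

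The main obstacle I expect is bookkeeping invariance under the full group $G$ while Lemmas \ref{lem:exist_infinite} and \ref{lem:red_dim} are applied only to $H_0$ and to irreducible germs. In particular, the irreducibility hypothesis of Lemma \ref{lem:red_dim} must be reconciled with taking $G$-saturations of components.
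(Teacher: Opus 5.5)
Your overall strategy is the paper's: virtual solvability from Proposition \ref{pro:virt_solv}, a finite index normal subgroup of $G$ inside $\overline{G}_0$ whose Zariski closure is $\overline{G}_0$ (hence connected and solvable), repeated use of Lemmas \ref{lem:exist_infinite} and \ref{lem:red_dim} starting from $(\C^{n},{\bf 0})$, saturation by $G$, and transfer of finite order from $H$ to $G$ by finite index. Your separate ``abelian final stage'' is just the case $\ell=1$ of Lemma \ref{lem:red_dim}, which the paper handles inside the same loop. The one step that is not actually closed is the one you flag yourself.

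\textbf{The irreducibility step.} The bound $\dim W' < \dim W$ in Lemma \ref{lem:red_dim} uses irreducibility. For a reducible $G$-invariant $W$, the set $\fix(\mathcal{A}^{0}_{H_0,W})\cap W$, which is what your conjugation-invariance remark is about, is a proper subgerm but may keep a top-dimensional component. So ``the dimension strictly drops'' is not justified.

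Your component workaround replaces $H_0$ by $H_1=\mathrm{Stab}_{H_0}(W_0)$. You would then have to re-verify the hypotheses of Lemma \ref{lem:exist_infinite} for $(H_1,W_0)$: that $\overline{H_1}$ is connected, and that $H_{1|W_0}$ has derived length at least $2$. You only know the latter for $H_{0|W}$, and you do not say which component to pick. You would also need the final abelian subgroup to be normal in $G$.

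\textbf{How the paper handles it.} It keeps $V$ equal to the $G$-orbit of one irreducible component. At each stage it takes $H=J\cap(G\cap\overline{G}_0)$, where $J$ is the subgroup preserving every component of $V$. Then each component $W$ is $H$-invariant and $\overline{H}=\overline{G}_0$. The derived length of $H_{|W}$ is independent of $W$ by $G$-conjugacy, so Lemma \ref{lem:red_dim} applies to $(H,W)$. The $G$-orbit of a component of its output has dimension in $[1,\dim V)$.

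\textbf{Two cheaper repairs within your set-up.}

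(a) Connectedness of $\overline{H_0}$ already forces $H_0$ to preserve every irreducible component $W_0$ of an $H_0$-invariant germ. The stabilizer of $W_0$ has finite index, so its normal core $N$ satisfies $\overline{N}\supset\overline{H_0}_0=\overline{H_0}$ by Remark \ref{rem:fin_ind}. Also $\mathcal{I}_{I(W_0)}\supset N$ is pro-algebraic by Proposition \ref{pro:inv_closed}. Hence $H_0\subset\overline{H_0}\subset\overline{N}\subset\mathcal{I}_{I(W_0)}$, so $H_1=H_0$, and you may work on a component realizing the derived length.

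(b) Alternatively, drop dimension counting. $\fix(\mathcal{A}^{0}_{H_0,W})\cap W$ is $G$-invariant by your remark. It has positive dimension by the lower-bound half of the proof of Lemma \ref{lem:red_dim}, which uses only Corollary \ref{cor:pos_inter} and commutativity of $H^{\ell-1}_{0|W}$, not irreducibility. It is also a proper subgerm of $W$. So both your descent and your final stage terminate by the descending chain condition for germs of analytic sets.
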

\begin{proof} 
The group $G$ is virtually solvable by Proposition \ref{pro:virt_solv}.
Assume, without loss of generality,  that $V$ is a germ  of analytic variety at ${\bf 0}$ of positive dimension such that 
$V$ is the $G$-orbit of any of its irreducible components. We define $H = J \cap {G}_{0}$, 
where ${G}_{0} = G \cap \overline{G}_{0}$ and
\[ J = \{ \phi \in G: \phi (W) = W \ {\rm for \ any \ irreducible \ component} \ W \ {\it of} \ V \} . \]
Clearly, $J$ is a finite index normal subgroup of $G$ and so is $G_0$ (Remarks \ref{rem:conn_char} and \ref{rem:g0}). 
It follows that $H$ is a finite index normal subgroup of $G$.
Therefore, $\overline{H}$ contains $\overline{G}_0$ by Remark \ref{rem:fin_ind}.
Since $H < \overline{G}_0$ and the latter group is pro-algebraic (Remark \ref{rem:g0}), 
we deduce $\overline{H} < \overline{G}_0$ and hence  $\overline{H} = \overline{G}_0$.
Remark \ref{rem:closure_structure} implies that $\overline{G}_0$ is solvable. 
As a consequence, $\overline{H}$ and its subgroup $H$ are also solvable.

Fix an irreducible component $W$ of $V$. Denote by $\ell$ the derived length of $H_{|W}$; 
it does not depend on $W$ since $H \lhd G$ and the $G$-orbit of $W$ is $V$. 
If $\ell = 0$ then we are done since $H_{|V} =\{  \mathrm{id}_{V} \}$ and $G_{|V}$ is finite. 
So we can assume $\ell \geq 1$ for any choice of $V$.

Denote $\mathcal{A}_{W} = \mathcal{A}_{H, W}$ and assume 
$\mathcal{A}_{W} \neq \emptyset$. 
In this case, we can replace $V$ with a subvariety $V'$ of smaller but still positive dimension. 
Indeed, Lemma \ref{lem:red_dim} provides 
a $H$-invariant analytic set $V''$ such that $1 \leq \dim (V'') < \dim (V)$.
By considering the $G$-orbit $V'$ of one of the irreducible components of $V''$, we obtain that
$V'$ is a $G$-invariant germ of analytic set and $1 \leq \dim (V') < \dim (V)$.
This conclusion is always achieved if $\ell \geq 2$ by Lemma \ref{lem:exist_infinite}.

Now consider the $G$-invariant analytic set $(\cc^{n}, {\bf 0})$. 
We can apply the previous method and since
the dimension can not decrease indefinitely, 
we obtain a germ of analytic set $V$, such that $V$ is the $G$-orbit of one of its irreducible components, 
$\ell =1$ and $\mathcal{A}_{W} = \emptyset$ for any irreducible component $W$ of $V$.
These properties imply that $H_{|V}$ is abelian and consists of finite order elements. 
Since $|G:H|< \infty$, $G_{|V}$  consists of finite order elements. 
\end{proof}

Now, we prove the main results for the torsion locus of a finite orbits subgroup of $\diff{}{n}$.

\begin{proof}[Proof of Theorem \ref{thm:main_g}]
Consider the germ of $G$-invariant germ of analytic variety $V$ provided by Proposition  \ref{pro:main}.
Since $V \subset \fg$ by Proposition \ref{pro:fg}, it follows that the torsion locus $\fg$ of $G$ has
positive dimension. The group $G$ is virtually solvable by Proposition  \ref{pro:main}. The remaining properties
are inherited from  Proposition \ref{pro:fg}.
\end{proof}

\begin{proof}[Proof of Theorem \ref{thm:main_g_fg}]
    It is a consequence of Theorem \ref{thm:main_g} and Proposition \ref{pro:fgfg}.
    More precisely, since $\fg = \fix (\overline{G}_{0} \cap G)$, the group $G_{|\fg}$ is a quotient of $G/ (\overline{G}_{0} \cap G)$.
    It is also a quotient of $\overline{G}/\overline{G}_0$ (Remark \ref{rem:conn_char}), a quotient of 
    $\overline{D_{\bf 0} G}/ \overline{D_{\bf 0} G}_{0}$ (Remark \ref{rem:g0}) and anti-isomorphic to a quotient of 
    $\overline{G}_{1}/\overline{G}_{1,0}$ (Remark \ref{rem:g0}).
\end{proof}
\begin{proof}[Proof of Corollary \ref{cor:main_g}]
The first case is a consequence of Theorem \ref{thm:main_g_fg}.
The second case derives from Theorem \ref{thm:main_g}, the definition of $\fg$ and that $\mm (\phi)=1$ for any $\phi \in G$ 
(Lemma \ref{lem:connected} and Definition \ref{def:embed}). The last case is a consequence of the second since 
$\spec (D_{\bf 0} \phi) =\{1\}$ implies that Condition \eqref{equ:cond_con} holds and hence $\mm (\phi)=1$.
\end{proof}

Proposition \ref{pro:fg} is a result of analytic maximality for the torsion locus. It is also maximal from a topological point of 
view in the finite orbits case:

\begin{pro}
    Let $G$ be a finite orbits subgroup of $\diff{}{n}$ with a finite symmetric generating set $\mathcal{S} = {\{ \phi_j \}}_{1 \leq j \leq k}$.
    Fix a representative $\varphi_j : U_j \to V_j$  of $\phi_j$, for any $1 \leq j \leq k$, 
    such that the pseudogroup $\mathcal{G}$ generated by ${\{ \varphi_j \}}_{1 \leq j \leq k}$ has finite orbits. 
    Suppose that there exists a totally $\mathcal{G}$-invariant continuum $\mathcal{K}$, i.e.   
    $\varphi_j (\mathcal{K}) = \mathcal{K}$ for any $1 \leq j \leq k$, with ${\bf 0} \in \mathcal{K}$.
    Then the germ $(\mathcal{K}, {\bf 0})$ is contained in $\fg$.
\end{pro}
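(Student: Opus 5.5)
We want $(\mathcal{K},{\bf 0}) \subset \fix(\phi^{\mm(\phi)})$ for every $\phi \in G$, since $\fg = \cap_{\phi \in G} \fix (\phi^{\mm (\phi)})$. Fix $\phi \in G$ and write it as a word $\phi_{j_1}\circ \cdots \circ \phi_{j_s}$ in the generators. The composition $\varphi = \varphi_{j_1}\circ\cdots\circ\varphi_{j_s}$ is an element of $\mathcal{G}$ representing $\phi$. Since each $\varphi_j$ maps $\mathcal{K}$ onto $\mathcal{K}$, $\varphi$ is defined on all of $\mathcal{K}$ and satisfies $\varphi(\mathcal{K})=\mathcal{K}$. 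The same holds for $\varphi^{-1}$, because the generating set is symmetric.

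The goal is then to place $\mathcal{K}$, near ${\bf 0}$, inside the continuum of periodic points $\mathcal{K}_{\phi,\epsilon}$ and to invoke Proposition~\ref{pro:cc_is_periodic}.

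\textbf{Step 1 (periodicity on $\mathcal{K}$).} Every point ${\bf z}\in\mathcal{K}$ has a full bi-infinite $\varphi$-orbit inside $\mathcal{K}$. This orbit lies in the $\mathcal{G}$-orbit of ${\bf z}$, which is finite, so it is periodic. Hence $\mathcal{K}\subset\bigcup_k \{{\bf z}\in\mathcal{K}:\varphi^k({\bf z})={\bf z}\}$.

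\textbf{Step 2 (reduction to a small ball).} Choose $\epsilon$ small enough that Proposition~\ref{pro:cc_is_periodic} applies to $\phi$. We cannot simply intersect $\mathcal{K}$ with $\overline{B}({\bf 0},\epsilon)$, since orbits of points of $\mathcal{K}\cap \overline{B}({\bf 0},\epsilon)$ may leave the ball. Instead we use the decomposition $\mathcal{K}=\bigcup_k \mathcal{K}\cap\fix(\varphi^k)$, a countable union of closed sets. By Baire, near ${\bf 0}$ some $\fix(\varphi^{k})$ contains an open piece of $\mathcal{K}$, but this alone need not reach ${\bf 0}$.

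The better route is to mimic the proof of Proposition~\ref{pro:cc_is_periodic}, working directly on the compact set $\mathcal{K}$ in a neighbourhood $U$ where the representative $\varphi$ is defined. The set-up is as follows:
\begin{itemize}
\item take the components of $\mathcal{K}\cap\fix(\varphi^k)$ for all $k$;
\item use Remark~\ref{rem:stable_manifold}, applied to the finite-orbit germs $\varphi_{\bf z}^k$ at periodic points, to get spectra on $S^1$;
\item use Corollary~\ref{cor:per_to_fix} at each point of $\mathcal{V}_s$.
\end{itemize}
With this set-up, the chain argument shows that each chain-class is contained in $\fix(\varphi^{k_1})$, where $k_1=a\,\mm({\bf z}_1)$ is attached to a base point ${\bf z}_1$ of the class. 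The classes are disjoint closed sets covering the continuum $\mathcal{K}$, so Sierpi\'nski's theorem gives $\mathcal{K}\subset\fix(\varphi^{k})$ for a single $k$.

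\textbf{Step 3 (exponent equals $\mm(\phi)$).} Choose the base point of the class to be ${\bf z}_1={\bf 0}$, with $a=1$. Then $k_1=\mm(\phi)$, and we obtain $\mathcal{K}\subset\fix(\varphi^{\mm(\phi)})$. In particular $(\mathcal{K},{\bf 0})\subset\fix(\phi^{\mm(\phi)})$, and intersecting over all $\phi\in G$ gives the claim.

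The main obstacle is Step 2. The chain argument of Proposition~\ref{pro:cc_is_periodic} uses the analytic sets $\mathcal{V}_s$, semianalyticity, and finiteness of the components of the periodic sets. Here these must be justified on a neighbourhood of $\mathcal{K}$ rather than on a ball. I would first try to restrict everything to a small ball $\overline{B}({\bf 0},\epsilon)$, using total invariance to get $\mathcal{K}\cap\overline{B}({\bf 0},\epsilon)\subset\mathcal{K}'_{\phi,\epsilon'}$ for a suitable larger domain. Alternatively, one can note that $\mathcal{K}\cap\overline{B}({\bf 0},\epsilon)$ has a connected piece through ${\bf 0}$ contained in $\mathcal{K}_{\phi,\epsilon}$ after enlarging the domain of definition, and then apply Proposition~\ref{pro:cc_is_periodic} verbatim.
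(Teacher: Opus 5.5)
Your outline matches the paper's proof. You represent $\rho\in G$ by a word $\varrho:U\to V$ in the $\varphi_j$ with $\varrho(\mathcal{K})=\mathcal{K}$, rerun the chain/Sierpi\'nski argument of Proposition~\ref{pro:cc_is_periodic} around $\mathcal{K}$ with base point ${\bf 0}$ and $a=1$ to get the exponent $\mm(\rho)$, and intersect over $\rho$. Steps~1 and~3 are fine. But Step~2, which you yourself call the main obstacle, is where the content of the proof lies, you leave it unresolved, and the concrete choices you make there do not work.

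\textbf{The set-up on $\mathcal{K}$.} Taking components of $\mathcal{K}\cap\fix(\varphi^k)$ loses the countability that Sierpi\'nski's theorem needs. $\mathcal{K}$ is an arbitrary continuum, so these traces are not semianalytic and can have uncountably many components. For example, take $\varphi(z_1,z_2)=(z_1,-z_2)$ and $\mathcal{K}$ a symmetric Cantor comb through ${\bf 0}$; then $\mathcal{K}\cap\fix(\varphi)$ is a Cantor set. So nothing in your set-up guarantees countably many chain classes. In Proposition~\ref{pro:cc_is_periodic}, countability comes from the ambient sets $\overline{\mathrm{Per}}_{\phi,\epsilon,k}$, which are semianalytic with finitely many components for each $k$, not from their traces on a continuum.

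\textbf{The fallbacks.} These fail for two reasons.
\begin{enumerate}
\item Orbits of points of $\mathcal{K}\cap\overline{B}({\bf 0},\epsilon)$ stay in $\mathcal{K}$, not in a small ball. There need not be any ball around ${\bf 0}$ containing them on which a representative of $\phi$ and of $\phi^{-1}$ is defined, and Proposition~\ref{pro:cc_is_periodic} is only stated for small $\epsilon$.
\item The connected piece of $\mathcal{K}\cap\overline{B}({\bf 0},\epsilon)$ through ${\bf 0}$ does not determine the germ $(\mathcal{K},{\bf 0})$. A comb-like continuum can have pieces accumulating at ${\bf 0}$ that are joined to ${\bf 0}$ only far away, and controlling the piece through ${\bf 0}$ says nothing about them.
\end{enumerate}

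\textbf{The missing idea.} Replace the ball by a neighbourhood of the \emph{whole} continuum. Choose an open $W\supset\mathcal{K}$ with real analytic boundary and $\overline{W}\subset U\cap V$, and define $\mathcal{K}'$, $\overline{\mathrm{Per}}_{k}$ and $\mathcal{V}_k$ with $\overline{W}$ in place of $\overline{B}({\bf 0},\epsilon)$.
\begin{itemize}
\item The real analytic boundary keeps each $\overline{\mathrm{Per}}_{k}$ semianalytic with finitely many components, so the components and chain classes are countable.
\item Finiteness of $\mathcal{G}$-orbits gives periodicity in $\mathcal{K}'$ and the spectral condition of Remark~\ref{rem:stable_manifold} at periodic points.
\item The chain argument then goes through as before.
\end{itemize}
Total invariance gives $\mathcal{K}\subset\mathcal{K}'$. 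Connectedness puts all of $\mathcal{K}$ in the component of $\mathcal{K}'$ through ${\bf 0}$, which is a component of $\overline{\mathrm{Per}}_{\mm(\rho)}$. Hence $\mathcal{K}\subset\fix(\varrho^{\mm(\rho)})$ on all of $\mathcal{K}$, not just near ${\bf 0}$.

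\textbf{A smaller point.} Your final ``intersecting over all $\phi\in G$'' should go through noetherianity of $\Oc_{n}$. Write $\fg=\cap_{j=1}^{r}\fix(\rho_j^{\mm(\rho_j)})$ for finitely many $\rho_j$, and use the inclusion of $(\mathcal{K},{\bf 0})$ in each of these finitely many germs.
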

Note that the total invariance of $\mathcal{K}$ implies 
$\mathcal{K} \subset \cap_{j=1}^{k} (U_j \cap V_j)$ in particular. 
\begin{proof}
Fix $\rho \in G$. There exists a word $\phi_{i_l} \circ \hdots \circ \phi_{i_1}$ such that $\rho = \phi_{i_l} \circ \hdots \circ \phi_1$.
The word $\varphi_{i_l} \circ \hdots \circ \varphi_1$ defines an element $\varrho: U \to V$ of $\mathcal{G}$, 
where $\mathcal{K} \subset U \cap V$ and $\varrho (\mathcal{K}) = \mathcal{K}$. 
Moreover, 
we can choose a small neighborhood $W$ of $\mathcal{K}$, with real analytic boundary, 
such that $\overline{W} \subset U \cap \rho (U)$. 
Analogously as in Proposition \ref{pro:cc_is_periodic}, replacing $\overline{B}(0,\epsilon)$ with $\overline{W}$, 
we obtain that  $\mathcal{K} \subset \fix (\varrho^{\mm (\rho)})$.
Thus, the germ $(\mathcal{K}, {\bf 0})$ is contained in the germ $\fix (\rho^{\mm (\rho)})$ at ${\bf 0}$.

There exist $\rho_1, \hdots, \rho_r$ such that $\fg = \cap_{j=1}^{r} \fix (\rho_{j}^{\mm (\rho_{j})})$ by noetherianity of $\Oc_{n}$.
Therefore, $(\mathcal{K}, {\bf 0})$ is contained in the torsion locus.  
\end{proof}

\section{Closed leaves foliations}
\label{sec:closed_leaves}
We already proved the main results for groups of local biholomorphisms (Theorems \ref{thm:main_g} and \ref{thm:main_g_fg}).
Now, we apply them to foliations  to obtain Theorem  \ref{thm:main}. 

\subsection{Closed leaves foliations and finite orbits}
As a first step in our goal of proving Theorem  \ref{thm:main}, we relate the closed leaves property for foliations with 
the finite orbits properties for groups. 
Let $\mathcal{F}$ be a holomorphic foliation in a complex manifold $M$. 
\begin{defi}
    \label{def:saturated}
    Given a subset $S$ of $M$, the {\it saturated} of $S$ is the minimal $\mathcal{F}$-invariant set containing $S$. 
\end{defi} 
\begin{defi}
    Given a subset $U$ of $M$, we denote by $\mathcal{F}_U$ the restriction of $\mathcal{F}$ to $U$.
    We consider either open or invariant subsets of $M$. 
\end{defi} 
\begin{defi}
    \label{def:stable}
    Let $\mathcal{L}$ be a leaf of $\mathcal{F}$. We say that $\mathcal{L}$ is {\it stable} if there exists a 
    neighborhood basis of $\mathcal{L}$ whose sets are $\mathcal{F}$-invariant. 
\end{defi}

\begin{lemma}
    Suppose that all leaves of $\mathcal{F}_U$ are closed in an open set $U$. Let $U'$ be an open subset of $U$. Then 
    all leaves of $\mathcal{F}_{U'}$ are closed in $U'$.  
\end{lemma}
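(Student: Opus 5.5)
The plan is to relate the leaves of $\mathcal{F}_{U'}$ to the leaves of $\mathcal{F}_{U}$ and then use that the plaques of a foliation give a local product structure. Let $\mathcal{L}'$ be a leaf of $\mathcal{F}_{U'}$. It is a connected $\mathcal{F}$-invariant subset of $U' \subset U$, so it lies in a unique leaf $\mathcal{L}$ of $\mathcal{F}_U$. More precisely, $\mathcal{L}'$ is a connected component of $\mathcal{L} \cap U'$, where $\mathcal{L} \cap U'$ carries the leaf topology induced from $\mathcal{L}$. This is the same description the paper gives in the introduction for restrictions to open sets.

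To show $\mathcal{L}'$ is closed in $U'$, take ${\bf p} \in U'$ in the closure of $\mathcal{L}'$ in $U'$. Since $\mathcal{L}$ is closed in $U$, we have ${\bf p} \in \mathcal{L}$. Choose a foliated chart $\Delta \subset U'$ around ${\bf p}$, biholomorphic to a product $P \times T$ of polydiscs, with the plaques $P \times \{t\}$ contained in leaves. The plaque $P_{\bf p}$ through ${\bf p}$ is then a connected subset of $\mathcal{L} \cap U'$, and it is contained in the leaf of $\mathcal{F}_{U'}$ through ${\bf p}$.

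It remains to see that $P_{\bf p}$ meets $\mathcal{L}'$, so that ${\bf p}$ actually lies in $\mathcal{L}'$. This is the main obstacle. Points of $\mathcal{L}'$ accumulating at ${\bf p}$ lie on plaques $P \times \{t_m\}$ with $t_m \to t_{\bf p}$. All of these plaques belong to $\mathcal{L}$, but a priori they could be different plaques of the same leaf, so some extra input is needed to identify them with $P_{\bf p}$.

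I would handle this using the finiteness statement the paper announces (Proposition \ref{pro:closed_finite}): a leaf of $\mathcal{F}_U$ meets a compact part of a transversal in finitely many points. Shrinking $T$ to a compact transversal, $\mathcal{L}$ meets it in finitely many points, so $t_m = t_{\bf p}$ for $m$ large. Hence $P_{\bf p}$ contains points of $\mathcal{L}'$, so $P_{\bf p} \subset \mathcal{L}'$ by connectedness and ${\bf p} \in \mathcal{L}'$. If that proposition is proved later using this lemma, there is a circularity. In that case I would argue directly: a closed leaf of a foliation in a manifold is an embedded submanifold, since it is a countable union of plaques, so Baire's theorem shows it has locally finitely many plaques in each chart. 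Then $t_m = t_{\bf p}$ for large $m$ as above, and the conclusion follows.
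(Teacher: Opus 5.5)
Your reduction matches the paper's. A leaf $\mathcal{L}'$ of $\mathcal{F}_{U'}$ is a connected component of $\mathcal{L}\cap U'$, where $\mathcal{L}$ is the leaf of $\mathcal{F}_U$ containing it, and everything rests on the fact that a closed leaf is embedded. The paper quotes Camacho--Lins Neto for this: a closed leaf of $\mathcal{F}_U$ is a closed analytic subset of $U$ of dimension $\dim(\mathcal{F})$. The components of $\mathcal{L}\cap U'$ are then closed analytic subsets of $U'$. Your plaque argument at ${\bf p}$ unpacks exactly this last step. It is fine once you know that $t_{\bf p}$ is an isolated point of the set $S\subset T$ of parameters of plaques of $\mathcal{L}$ in the chart.

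The weak point is how you obtain that isolation.

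\textbf{First route.} Proposition \ref{pro:closed_finite} is stated for neighborhoods of a compact leaf. The finiteness you want is not its content; its proof imports it from the same external theorem. So citing it is indirect, and you are right to worry about circularity.

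\textbf{Fallback.} As written, it does not prove the claim. $S$ is countable and closed in $T$, but Baire's theorem only yields \emph{some} isolated point $s_0\in S$. It does not give that every point of $S$, in particular $t_{\bf p}$, is isolated; a closed countable set such as $\{0\}\cup\{1/m : m\in\mathbb{N}\}$ is not discrete.

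\textbf{Missing step.} You need homogeneity along the leaf.
\begin{enumerate}
\item Join the plaque $P\times\{s_0\}$ to ${\bf p}$ by a path in $\mathcal{L}$.
\item Take the holonomy map $h$ along this path, computed in a chain of foliated charts inside $U$. It is a homeomorphism from a neighborhood of $s_0$ in $T$ onto a neighborhood of $t_{\bf p}$ in $T$.
\item For each $t$ in its domain, $h(t)$ parametrizes a point on the same leaf of $\mathcal{F}_U$ as the point with parameter $t$. Hence $h$ maps $S$ onto $S$ locally.
\item Since $s_0$ is isolated in $S$, so is $t_{\bf p}$.
\end{enumerate}
With this step added, or by simply citing the standard fact that closed leaves are properly embedded as the paper does, your proof is complete.
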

\begin{proof}
By transverse uniformity of $\mathcal{F}$, a leaf $\mathcal{L}$ of $\mathcal{F}_U$ is closed if and only if $\mathcal{L}$ 
is a closed analytic $\mathcal{F}_{U}$-invariant subset of $U$ of 
dimension $\dim (\mathcal{F})$ \cite[section 3.3, Theorem 5]{Camacho-Lins_Neto:foliations}. Let ${\bf z} \in U'$ and denote by $\mathcal{L}_{\bf z}$ and $\mathcal{L}_{\bf z}'$
the leaves through ${\bf z}$ of $\mathcal{F}_{U}$ and $\mathcal{F}_{U'}$ respectively. By hypothesis  $\mathcal{L}_{\bf z}$ is closed in $U$. 
Moreover $\mathcal{L}_{\bf z}'$ is closed in $U'$ since $\mathcal{L}_{\bf z}'$ is a connected component of $\mathcal{L}_{\bf z} \cap U'$
and, in particular, a closed analytic subset of $U'$ of 
dimension $\dim (\mathcal{F})$.
\end{proof}
\begin{defi}
    \label{def:hol_pseudo}
    Consider an open subset $U$ of $M$ and a connected transverse section $T$ to $\mathcal{F}$.
    Let us consider the set $\mathfrak{G}$ of germs of maps $\phi: (T,{\bf z}) \to (T, \phi({\bf z}))$, 
    where  there exists a path $\gamma:[0,1] \to M$ 
    in the leaf of $\mathcal{F}_{U}$ through ${\bf z}$ such that $\gamma (0) = {\bf z}$, $\gamma (1) = \phi ({\bf z})$ and 
    $\phi$ is the holonomy map associated to $\gamma$. We define the {\it holonomy pseudogroup} $\mathcal{H}_{\mathcal{F},U,T}$ 
    of $\mathcal{F}_{U}$ in $T$ as the pseudogroup 
    generated by the maps of the form $\phi : W_1 \to W_2$,  where $W_1$ and $W_2$ are open subsets of $T$,
    and the germ $\phi_{\bf z}$ belongs to $\mathfrak{G}$ for any ${\bf z} \in W_1$. 
\end{defi}
\begin{defi}
    Consider the setting in Definition \ref{def:hol_pseudo}. Let $\mathcal{L}$ be a leaf  of $\mathcal{F}_{U}$ and 
    ${\bf z}_{0} \in \mathcal{L} \cap T$.  Let $\mathcal{H}_{\mathcal{F},\mathcal{L},T,{\bf z}_{0}}$ 
    be the holonomy subgroup ${(\mathcal{H}_{\mathcal{F},U,T})}_{{\bf z}_{0}}$
    of  $\mathrm{Diff} (T, {\bf z}_{0})$ (cf. Definition \ref{def:germ}).
    We denote by $\mathcal{H}_{\mathcal{F},\mathcal{L}}$ if  $T$ and ${\bf z}_{0}$ are implicit.
\end{defi}  
\begin{rem}
    $\mathcal{H}_{\mathcal{F},\mathcal{L}}$ is the image of the holonomy morphism 
    $\pi_{1} ({\mathcal L},{\bf z}_{0}) \to \mathrm{Diff} (T, {\bf z}_{0})$.
\end{rem}

\begin{pro}
\label{pro:closed_finite}
Let $\mathcal{F}$ be a holomorphic foliation in a complex manifold $M$ and $\mathcal{L}$ be a compact leaf of $\mathcal{F}$. 
Consider a connected holomorphic transverse section $T$ to $\mathcal{F}$ containing a point ${\bf z}_{0} \in \mathcal{L}$. 
Then there exists an open neighborhood $U$ of $\mathcal{L}$ such that $\mathcal{F}_{U}$ has closed leaves if and only if 
$\mathcal{H}_{\mathcal{F},\mathcal{L},T,{\bf z}_{0}}$ has the finite orbits property.
\end{pro}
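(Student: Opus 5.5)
We prove the two implications separately. Throughout we use a tubular neighborhood of $\mathcal{L}$: since $\mathcal{L}$ is compact, cover it by finitely many foliated charts $U_1, \hdots, U_p$ with plaque projections onto transversals $T_1, \hdots, T_p$. We join each $T_i$ to $T$ by a path in $\mathcal{L}$ from ${\bf z}_0$. The holonomy along these paths gives germs of biholomorphisms $h_i:(T_i,{\bf z}_i)\to (T,{\bf z}_0)$. The holonomy pseudogroup $\mathcal{H}_{\mathcal{F},U,T}$, restricted to a small neighborhood of ${\bf z}_0$ in $T$, is then generated by finitely many maps $\varphi_1, \hdots, \varphi_k$. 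These are the representatives of the generators of $\mathcal{H}_{\mathcal{F},\mathcal{L},T,{\bf z}_0}$ coming from the changes of charts. This is the standard Reeb-type description (cf. \cite{Camacho-Lins_Neto:foliations}).

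\emph{Closed leaves $\Rightarrow$ finite orbits.} Fix $U$ with closed leaves, and shrink it to $U'=\cup_i U_i'$, with $U_i'\Subset U_i$ relatively compact subcharts; by the preceding lemma $\mathcal{F}_{U'}$ still has closed leaves. Let $T'\Subset T$ be a compact transverse disc around ${\bf z}_0$ contained in $U'$. A leaf $\mathcal{L}'$ of $\mathcal{F}_{U'}$ is a closed analytic subset of $U'$ of dimension $\dim(\mathcal{F})$, so $\mathcal{L}'\cap T$ is a discrete subset of $T$. We claim that $\mathcal{L}'\cap T'$ is finite. Otherwise there is an accumulation point ${\bf w}\in T'\subset U'$. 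Since $\mathcal{L}'$ is closed in $U'$, we get ${\bf w}\in\mathcal{L}'$, and in a foliated chart at ${\bf w}$ infinitely many plaques of $\mathcal{L}'$ accumulate on the plaque through ${\bf w}$. This contradicts local analyticity (closedness as an analytic set of the right dimension) of $\mathcal{L}'$. The orbit of a point ${\bf z}$ under the pseudogroup generated by the $\varphi_j$ restricted to the interior of $T'$ is contained in $\mathcal{L}_{\bf z}'\cap T'$, hence finite. By the remark after Definition \ref{def:fin_orb_prop}, $\mathcal{H}_{\mathcal{F},\mathcal{L},T,{\bf z}_0}$ has the finite orbits property.

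\emph{Finite orbits $\Rightarrow$ closed leaves.} By the same remark there are representatives $\varphi_j$ of the generators whose pseudogroup $\mathcal{G}$ has finite orbits in some open $D\ni{\bf z}_0$ of $T$. Conjugating by the $h_i$, we obtain finite orbits on transverse discs $D_i\subset T_i$. Define $U$ as the union of the parts of the charts $U_i$ lying over $D_i$ (saturated plaques). We can arrange this so that every leaf of $\mathcal{F}_U$ meets each $D_i$ in a single $\mathcal{G}$-orbit, transported by $h_i$. The main obstacle is this bookkeeping: one must choose the $D_i$ small and compatible, so that the changes of coordinates on overlaps $U_i\cap U_j$ are exactly the restrictions of the pseudogroup maps. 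I would handle it by first shrinking $D$ so that all compositions of length at most $p$ stay within the domain where the $\varphi_j$ are defined, and then taking $D_i=h_i^{-1}(D)$. With this choice, each leaf $\mathcal{L}'$ of $\mathcal{F}_U$ is a union of finitely many plaques, one for each point of a finite orbit in each chart. A finite union of closed plaques is closed in $U$, because each plaque is closed in its chart and the chart is closed in $U$ after restricting to the saturated set. Hence $\mathcal{F}_U$ has closed leaves.
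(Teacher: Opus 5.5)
Your first implication is essentially the paper's argument: a closed leaf is an analytic set of dimension $\dim(\mathcal{F})$, so it meets a compact piece of $T$ in finitely many points, and holonomy orbits lie in such intersections. You only need to take $T'$ small enough that the holonomy paths realizing the $\varphi_j$ from points of $T'$ stay inside $U'$.

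For the converse you take a genuinely different route. The paper fixes finite-orbit representatives of the holonomy maps of generators of $\pi_1(\mathcal{L},{\bf z}_0)$, generating a pseudogroup $\mathcal{H}$. It then uses a Riemannian metric on the universal cover to homotope every loop to a word in the generators through homotopies of uniformly bounded diameter. From this it deduces that for small neighborhoods the orbits of $\mathcal{H}_{\mathcal{F},U',T}$ lie in $\mathcal{H}$-orbits, and it concludes by contradiction: a non-closed leaf would produce an infinite orbit.

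You instead build $U$ explicitly as a finite union of thin flow boxes over $D_i=h_i^{-1}(D)$. You identify leaves of $\mathcal{F}_U$ with orbits of the pseudogroup of chart transitions $g_{ij}$ and conjugate these into $D$. The choice $D_i=h_i^{-1}(D)$ is exactly what makes any chain of transitions conjugate to a composition of the maps $h_j\circ g_{ij}\circ h_i^{-1}$, with no loss of domain at intermediate steps.

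Your route never needs $\pi_1$, generators, or the universal cover. It is enough that each of the finitely many conjugated transitions is a holonomy element, since then the definition of the finite orbits property gives a finite-orbit pseudogroup $\mathcal{G}$ with which all of them agree near ${\bf z}_0$. It also yields the sharper conclusion that each leaf of $\mathcal{F}_U$ is a finite union of plaques. The paper's route avoids all atlas bookkeeping, at the price of the homotopy-bound argument and the contradiction step.

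Some repairs are needed.
\begin{enumerate}
\item The shrinking condition you actually need is that, on $D$, each conjugated transition $h_j\circ g_{ij}\circ h_i^{-1}$ coincides with an element of $\mathcal{G}$. Your phrase about compositions of length at most $p$ should be replaced by this.
\item A leaf of $\mathcal{F}_U$ meets $D_i$ in a subset of a transported $\mathcal{G}$-orbit, not necessarily in a whole orbit. This containment is all you use.
\item Work with a good atlas, in which each plaque of $U_i$ meets at most one plaque of $U_j$, so that the $g_{ij}$ are well defined.
\item Your closedness justification is wrong as stated: the boxes are open in $U$, and a single plaque is usually not closed in $U$. Argue locally instead. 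For every $j$, $\mathcal{L}'\cap U_j$ is the finite union of the plaques over the finite set $\mathcal{L}'\cap D_j$, hence closed in $U_j$. Since the $U_j$ form an open cover of $U$, $\mathcal{L}'$ is closed in $U$.
\end{enumerate}
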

\begin{proof}
Consider a small open neighborhood $U$ of $\mathcal{L}$ such that $\mathcal{F}_{U}$ has closed leaves. 
Any leaf $\mathcal{L}'$ of $\mathcal{F}_U$ intersects $T \cap U$ in a discrete closed set  \cite[section 3.3, Theorem 5]{Camacho-Lins_Neto:foliations}. 
Consider an open neighborhood $U'$ of
$\mathcal{L}$ such that $U'$ is relatively compact and 
$\overline{U'} \subset U$. Then $\mathcal{H}_{\mathcal{F},U',T}$ has finite orbits and hence 
$\mathcal{H}_{\mathcal{F},\mathcal{L}}$ has the finite orbits property.

Suppose that $\mathcal{H}_{\mathcal{F},\mathcal{L}}$ has finite orbits.
Consider generators $[\gamma_1], \hdots, [\gamma_{\ell}]$ of $\pi_{1} (\mathcal L, {\bf z}_{0})$. 
Denote by $\phi_{j}: U_j \to V_j$, where ${\bf z}_{0} \in U_j \cap V_j$ and $U_j \cup V_j \subset T$, 
a representative of the holonomy map associated to $[\gamma_j]$
for $1 \leq j \leq \ell$. Let $\mathcal{H}$ be the pseudogroup generated by ${\{ \phi_j : U_j \to V_j \}}_{1 \leq j \leq \ell}$.
By hypothesis, and up to a good choice of representatives, $\mathcal{H}$ has finite orbits.

Fix a Riemannian metric in $\mathcal{L}$ that we lift to its universal cover $\tilde{\mathcal L}$.
Then there exists $C \in \mathbb{R}^{+}$ such that 
for any closed path $\gamma: [0,1] \to \mathcal{L}$, with $\gamma (0) = \gamma (1) = {\bf z}_{0}$, 
and up to a reparametrization of $\gamma$, there exists a word $\mathcal{W}$ on 
$\gamma_1, \hdots, \gamma_{\ell}$ such that $\gamma$ and $\mathcal{W}$ are homotopic rel $0,1$
via a (uniformly) bounded diameter homotopy $F:[0,1]\times[0,1] \to \mathcal{L}$, i.e.  
\begin{itemize}
\item $t \mapsto F(t,0)$ parametrizes $\gamma$ and $t \mapsto F(t,1)$ parametrizes $\mathcal{W}$;
\item any lift of $s \mapsto F(t,s)$ to  $\tilde{\mathcal L}$   is contained in 
a set of diameter less than $C$ for any $t \in [0,1]$.
\end{itemize}
This property guarantees that  for any small open 
neighborhood $U'$ of $\mathcal{L}$,  any orbit of $\mathcal{H}_{\mathcal{F},U',T}$ is
contained in the orbit of $\mathcal{H}$.

Given a small open neighborhood $U'$ of $\mathcal{L}$, consider a smaller open neighborhood $U$ of $\mathcal{L}$ contained in $U'$. 
Suppose, aiming at contradiction, that there exists a non-closed leaf $\mathcal{L}_{{\bf z}_{1}}$ of $\mathcal{F}_{U}$ through ${\bf z}_{1}$.
Then there exists an infinite orbit of $\mathcal{H}_{\mathcal{F},U',T}$.
Since it is contained in an orbit of $\mathcal{H}$, this contradicts that $\mathcal{H}$ has finite orbits.
\end{proof}

Now, we can show the main result of the paper. 

\begin{proof}[Proof of Theorem \ref{thm:main}]
Fix ${\bf z}_{0} \in \mathcal{L}$ and a transverse section $T$ to $\mathcal{F}$ such that ${\bf z}_{0} \in T$.
Since there exists an open neighborhood $U$ of $\mathcal{L}$ where $\mathcal{F}_{U}$ has closed leaves, 
the holonomy group $\mathcal{H}_{\mathcal{F},\mathcal{L}}$ of $\mathcal{L}$ has finite orbits 
by Proposition \ref{pro:closed_finite}.  We apply Theorem \ref{thm:main_g_fg} to the finitely generated group 
$\mathcal{H}_{\mathcal{F},\mathcal{L}}$ to obtain a torsion locus $\mathfrak{T} := \mathfrak{T}_{\mathcal{H}_{\mathcal{F},\mathcal{L}}}$
in $T$ at ${\bf z}_{0}$
of positive dimension such that the action of $\mathcal{H}_{\mathcal{F},\mathcal{L}}$ in $\mathfrak{T}$ is finite.
Reeb local stability theorem implies that the saturated $V$ of $\mathfrak{T}$ with respect to $\mathcal{F}$ 
(cf. Definition \ref{def:saturated})
is a germ of $\mathcal{F}$-invariant set in a neighborhood of 
$\mathcal{L}$ of dimension $\dim ({\mathcal F}) + \dim (\mathfrak{T})$, consisting of compact leaves and such that  
the holonomy group of $\mathcal{F}_{V}$ of every leaf of $V$ is finite.
In particular all leaves of $\mathcal{F}_{V}$ are stable.
\end{proof}

\subsection{Closed leaves foliations in projective manifolds}
This section is devoted to show Theorem  \ref{thm:main_p}.

Let $V$ the $\mathcal{F}$-invariant variety provided by Theorem \ref{thm:main}. 
It is the saturated of the torsion locus $\mathfrak{T} := \mathfrak{T}_{\mathcal{H}_{\mathcal{F},\mathcal{L}}}$
with respect to $\mathcal{F}$.
Our goal is globalizing $V$. Consider a holomorphic transverse section such that $\mathcal{L} \cap T = \{ {\bf z}_{0} \}$.

By work of G\'{o}mez-Mont \cite[Theorem 3]{Gomez-Mont:integral_compact}, there exists a countable family 
${\{S_l\}}_{l \in \mathcal{S}}$ of  irreducible Zariski-closed subvarieties of the Hilbert scheme  of varieties of dimension
equal to $\codim (\mathcal{F})$ such that
the closure of any quasi-projective leaf of $M$ belongs to $\cup_{l \in \mathcal{S}} S_l$ and 
each $S_l \in \mathcal{S}$ satisfies
\begin{itemize}
    \item there exists a universal projective subscheme $\mathcal{U}_l \subset M \times S_l$ such that 
    $p_{2,l}^{-1} (s)$ is the scheme represented by $s$ for $s \in S_l$
    and $p_{2,l}$ is a flat proper morphism,
    where $p_{1,l}: \mathcal{U}_l \to M$ and $p_{2,l}: \mathcal{U}_l \to S_l$
    are the natural (proper) projections;
    \item $p_{1,l} :  \mathcal{U}_l  \to W^{l}$ is a birational proper morphism, where  $W^{l} =  p_{1,l} (\mathcal{U}_l)$;
     \item $f_l: W^{l} \to S_l$ is a dominant rational map, where $f_{l}: W^{l} \to S_l$ is defined by $f_l = p_{2,l} \circ p_{1,l}^{-1}$;
    \item $p_{2,l}^{-1} (s)$ is $\mathcal{F}$-invariant for any $s \in S_l$  
    and in particular $W^{l} $ is $\mathcal{F}$-invariant;
    \item the set $T_l$ of $s \in S_l$ such that $p_{2, l}^{-1} (s)$ is integral is a non-empty Zariski-open subset of $S_l$
    and the set $T_{l}'$ of $s \in T_l$ such that $p_{2, l}^{-1} (s) \cap \mathrm{Sing} (\mathcal{F}) = \emptyset$
    is a Zariski-open subset of $S_l$ since it is $S_{l} \setminus p_{2,l}(p_{1,l}^{-1} (\mathrm{Sing} (\mathcal{F})))$.
\end{itemize}
Note that $f_l$ is a first integral by the fourth bullet point. 

Let ${V}_j$ be an irreducible component of the germ $(V, \mathcal{L})$. 
We claim that there is some $\ell \in \mathcal{S}$ such that $V_j \subset W^{\ell}$;
otherwise we get $\dim (V_j \cap W^{\ell}) < \dim (V_j)$ for any $l \in \mathcal{S}$, 
contradicting $V_j \subset \cup_{l \in \mathcal{S}} (V_j \cap W^{l})$ since $\mathcal{S}$ is countable. 
In particular $S_{\ell}$ has positive dimension.
Since $\mathcal{L} \in S_{\ell}$ and $\mathcal{L} \cap \mathrm{Sing} (\mathcal{F}) = \emptyset$,
the general $f_{\ell}$-fiber is a leaf of $\mathcal{F}$ by the last three bullet points.

The variety $W_j := W^{\ell}$ is projective since $p_{1,l}: \mathcal{U}_l \to M$ is proper. 
By using the closedness of the Fubini-Study volume and Stokes theorem, we deduce that the volume $s \mapsto \mathrm{vol} (p_{2,\ell}^{-1} (s))$
is constant in $T_{\ell}'$. As a consequence, the function $s \mapsto \mathrm{vol} (p_{2, \ell}^{-1} (s))$ is bounded in $S_{\ell}$ and hence
all holonomy groups of leaves of  $\mathcal{F}_{W_j}$ are finite. 
Therefore, we deduce $(W_j \cap T, {\bf z}_{0}) \subset (\mathfrak{T}, {\bf z}_{0})$ 
by Theorem \ref{thm:main_g_fg}. This implies $({W}_{j}, \mathcal{L}) \subset (V , \mathcal{L})$ by Theorem \ref{thm:main}.
 
Consider the irreducible components ${V}_1, \hdots, {V}_k$ of the germ $(V, \mathcal{L})$. 
Since 
\[  (V, \mathcal{L}) = ({V}_1 \cup \hdots \cup {V}_{k}, \mathcal{L}) \subset ({W}_1 \cup \hdots \cup {W}_{k}, \mathcal{L}) \subset (V, \mathcal{L}),  \]
we deduce 
$({W}_1 \cup \hdots \cup {W}_{k}, \mathcal{L}) = (V, \mathcal{L})$. 
Since $(V, \mathcal{L})$ is the saturated of $\mathfrak{T}$ by Theorem \ref{thm:main},
this concludes the proof of Theorem \ref{thm:main_p}.

\section{Finite orbits pseudo-group inducing an infinite subgroup of $\diff{}{n}$}  
\label{section:infinite}
Let us present examples of finite orbits subgroups of $\diff{}{}$ that satisfy the thesis of Theorem \ref{thm:main_g}
but not the conclusion of Theorem \ref{thm:main_g_fg}. Of course, such examples are not finitely generated. 

The first example is very simple. We define $\lambda_k = e^{\frac{2 \pi i}{2^{k-1}}}$ and 
\[ 
\begin{array}{ccccc}
\sigma_k & : & B(0,1/k) & \to  & B(0,1/k) \\
           &  & z & \mapsto & \lambda_k z
\end{array}
\]
for $k \in \nn$.
%
%
\begin{rem}
    \label{rem:inf_simple}
    The pseudogroup $\mathcal{P} := \langle \sigma_1, \hdots, \sigma_k, \hdots \rangle$ generated by the family ${(\sigma_{k})}_{k \geq 1}$ has finite orbits 
    and thus $P := \mathcal{P}_{\bf 0}$ is an infinite subgroup of $\diff{}{}$ that has the finite orbits property. The germ $\mathfrak{T}_{P}$ 
    is equal to $(\mathbb{C}, {\bf 0})$.
    In particular $P$ is an infinite abelian group consisting of finite order elements. 
    So the finite generation hypothesis is necessary in Theorem \ref{thm:main_g_fg}.
\end{rem} 
Unfortunately the above subgroup $P$ is not a ``natural" example since it was built by restricting the natural domains of definition of the 
diffeomorphisms in the family ${(\sigma_{k})}_{k \geq 1}$. 
Indeed, it would not be an example if we would have required pseudogroups (cf. Definition \ref{def:pseudogroup}) 
to be closed under analytic continuation.
This section is devoted to obtain a holomorphic pseudogroup with analogous properties as $P$ and also closed under analytic continuation.
The example is provided by next propositions.  
\begin{pro}
    \label{pro:natural_aux}
    There exists a sequence ${\{ \phi_k : \overline{D_k} \to \overline{D_k}\}}_{k \geq 1}$ of homeomorphisms such that
    \begin{enumerate}     
        \item \label{cond2}  $D_k$ (resp. $\overline{D_k}$) is an open (resp. closed) topological disc with $0 \in D_k \subset \cc$;
        \item \label{cond3}  
        $(\phi_k)_{|D_k} : D_k \to D_k$ is a biholomorphism such that $\phi_{k}' (0) = \lambda_k$;
        \item \label{cond4} $\phi_k$ cannot be holomorphically continued past any point of $\partial D_k$;
        \item \label{cond5} $\overline{D_{k}} \subset D_{k-1}$ and $\phi_{k}^{2} = {(\phi_{k-1})}_{|\overline{D}_{k}}$   
    \end{enumerate}
    for any $k  \geq 2$.
    Moreover, we require  $D_{1}= \cc$,  $\phi_{1} = \mathrm{id}$ and $\lim_{k \to \infty} \mathrm{diam} (\overline{D_{k}})  = 0$.
\end{pro}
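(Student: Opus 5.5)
The plan is an inductive construction in which $\phi_k$ is obtained from $\phi_{k-1}$ by choosing a \emph{different} linearizing coordinate at each step. For $k=1$ we take $D_1=\cc$ and $\phi_1=\mathrm{id}$. Suppose $\phi_{k-1}:\overline{D_{k-1}}\to\overline{D_{k-1}}$ has been built, with $(\phi_{k-1})_{|D_{k-1}}$ a biholomorphism satisfying $\phi_{k-1}'(0)=\lambda_{k-1}$, a root of unity of order $2^{k-2}$.

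First we linearize $\phi_{k-1}$. Since $\phi_{k-1}$ has finite order, the Koenigs/Cartan averaging argument gives a germ $h$ with $h(0)=0$, $h'(0)=1$ and
\[ h\circ\phi_{k-1}=\lambda_{k-1}h . \]
We fix a small disc $B(0,r)$ in the $w=h(z)$ coordinate on which $h^{-1}$ is univalent, with $h^{-1}(B(0,r))\subset D_{k-1}$. For $k=2$ we simply take $h=\mathrm{id}$ and $\lambda_1=1$. Note that $\lambda_k^2=\lambda_{k-1}$.

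Next, inside $B(0,r)$ we choose a Jordan domain $\Omega\ni 0$ with the following properties:
\begin{itemize}
\item $\overline{\Omega}\subset B(0,r)$ and $\mathrm{diam}\,\Omega<r/k$;
\item $\Omega$ is invariant under $w\mapsto\lambda_{k-1}w$ but \emph{not} under $w\mapsto\lambda_k w$;
\item $\partial\Omega$ is a wild curve, for instance a $\lambda_{k-1}$-symmetric Koch-type snowflake with non-self-similar, non-rectifiable pieces.
\end{itemize}
Let $g:\Omega\to\mathbb{D}$ be the Riemann map normalized by $g(0)=0$ and $g'(0)>0$. By uniqueness of the normalized Riemann map, $g$ commutes with the rotation by $\lambda_{k-1}$. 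We then define
\[ D_k=h^{-1}(\Omega),\qquad \phi_k=h^{-1}\circ g^{-1}\circ(\lambda_k\cdot)\circ g\circ h \quad\text{on } D_k . \]
Carathéodory's theorem extends $\phi_k$ to a homeomorphism of $\overline{D_k}$, which gives conditions \eqref{cond2} and \eqref{cond3}; in particular $\phi_k'(0)=\lambda_k$. Condition \eqref{cond5} follows from the computation
\[ \phi_k^2=h^{-1}g^{-1}\lambda_{k-1}gh=h^{-1}\lambda_{k-1}h=\phi_{k-1}, \]
which uses that $g$ commutes with the rotation by $\lambda_{k-1}$. The inclusion $\overline{D_k}\subset D_{k-1}$ holds by construction. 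The limit $\mathrm{diam}(\overline{D_k})\to 0$ follows from the smallness of $\Omega$ together with uniform Lipschitz control of $h^{-1}$ on $B(0,r)$, which we may assume after shrinking $r$.

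The main obstacle is condition \eqref{cond4}, that $\phi_k$ has no holomorphic continuation past any boundary point. This is also why we do not keep $g=\mathrm{id}$: in that case $\phi_k=h^{-1}\lambda_k h$ would extend across $\partial D_k$. We argue by contradiction. Suppose $\phi_k$ continues holomorphically to a neighborhood of some $p\in\partial D_k$. Then, in the $w$ coordinate, the map
\[ \Psi=g^{-1}\circ\lambda_k\circ g \]
extends conformally near $q=h(p)\in\partial\Omega$. Since $\Psi$ maps $\Omega$ onto $\Omega$, this extension maps a subarc of $\partial\Omega$ through $q$ conformally onto a subarc of $\partial\Omega$ through $\Psi(q)$. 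Moreover, $\Psi$ is not a rotation, because $\Omega$ is not $\lambda_k$-invariant.

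To rule this out, I would build $\partial\Omega$ so that it admits no nontrivial local conformal self-correspondences between boundary arcs. Concretely, I would use a $\lambda_{k-1}$-symmetric curve made of fractal pieces whose local Hausdorff dimension, or whose local scaling ratios, vary from point to point and are rationally independent. A conformal map is asymptotically a similarity at small scales, so it must preserve these local invariants. It can therefore only send an arc onto the image of that same arc under one of the rotations by powers of $\lambda_{k-1}$, and by the identity principle it is then globally one of those rotations. Conjugating back, $\lambda_k\circ g=g\circ\lambda_{k-1}^j$ for some $j$. Comparing derivatives at $0$, with $g'(0)>0$, forces $\lambda_k=\lambda_{k-1}^j$, which is impossible because $\lambda_k$ is a primitive $2^{k-1}$-th root of unity while every power of $\lambda_{k-1}$ has order dividing $2^{k-2}$. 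This yields condition \eqref{cond4}. Since the local-invariant argument is the delicate point, as a fallback I would cite a known existence result for Jordan domains whose Riemann maps conjugate only trivial automorphisms analytically across the boundary.
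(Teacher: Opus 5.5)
\textbf{Gap: condition \eqref{cond4} is not proved.} Your inductive framework is sound and parallels the paper's construction. Your $h$ plays the role of $\psi_{k-2}$, your $\Omega$ that of $E_{k-1}$, and your normalized Riemann map $g$ that of $\rho_{k-1}$ up to a scalar. The checks of \eqref{cond2}, \eqref{cond3}, \eqref{cond5} and of the diameter condition are fine. For \eqref{cond5} you should add the identity-principle argument showing that $h^{-1}\circ\lambda_{k-1}\circ h=\phi_{k-1}$ on all of $h^{-1}(B(0,r))$, not just as germs.

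The problem is condition \eqref{cond4}, the only non-routine point: you never produce an admissible $\Omega$. Non-invariance of $\Omega$ under $w\mapsto\lambda_k w$ only shows that $\Psi=g^{-1}\circ\lambda_k\circ g$ is not a rotation. It says nothing about continuation across $\partial\Omega$.

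The rigidity you need is that no local conformal correspondence between boundary arcs exists other than rotations by powers of $\lambda_{k-1}$. You assert this for a curve you never construct, and your suggested instances do not give it as stated:
\begin{itemize}
\item A snowflake with self-similar pieces has many local conformal self-correspondences, namely the similarities of the construction.
\item A $\lambda_{k-1}$-invariant local invariant that varies continuously along the curve cannot be injective on a fundamental arc (for $k=2$, on the whole curve). So equality of invariants does not force the correspondence to be a rotation. You would need an orientation argument and, above all, an actual Jordan curve whose pointwise dimension is a prescribed piecewise strictly monotone function. That is a genuine fractal-geometry construction.
\end{itemize}
The fallback citation is to an unspecified result, so as written \eqref{cond4} is unproved.

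\textbf{The missing idea.} No fractal rigidity is needed, only an asymmetry between two kinds of boundary arcs. As in Lemma \ref{lem:2arcs}, take a Jordan domain $D\ni 0$ with $\partial D=\alpha\cup\beta$, where $\alpha$ has real analytic interior and $\beta$ is nowhere real analytic. Using a disc automorphism and a translation, take a Riemann map $\tau$ with $\tau(0)=0$ sending $\alpha$ and $\beta$ onto complementary half-circles.

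Then $\tau^{-1}\circ(-z)\circ\tau$ exchanges $\alpha$ and $\beta$, so it cannot be continued past any boundary point. Indeed, a continuation, taken at a nearby boundary point where its derivative is nonzero, would carry a piece of one arc biholomorphically onto a piece of the other, making part of $\beta$ real analytic.

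To impose the symmetry, let $N=2^{k-2}$ and $\Omega=\{w : w^{N}\in\epsilon D\}$.
\begin{itemize}
\item Since $g$ commutes with $\lambda_{k-1}$, it has the form $g(w)=w\,u(w^{N})$. Hence $g(w)^N=T(w^N)$, where $T$ is a Riemann map of $\epsilon D$ fixing $0$, and so a constant multiple of $\tau(\cdot/\epsilon)$.
\item Because $\lambda_k^{N}=-1$, this gives $\Psi(w)^N=T^{-1}(-T(w^N))$.
\item The map $w\mapsto w^N$ is a local biholomorphism near every point of $\partial\Omega$. So a continuation of $\Psi$ past $q\in\partial\Omega$ would give one of $T^{-1}\circ(-z)\circ T$ past $q^N\in\partial(\epsilon D)$, which is impossible.
\end{itemize}
With this choice of $\Omega$ your argument goes through verbatim; this is exactly what Lemmas \ref{lem:tau_noext}--\ref{lem:nopast} accomplish in the paper.
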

\begin{pro}
    Let $\mathcal{G}$ be the holomorphic pseudogroup generated by ${\{ \phi_k: D_k \to D_k \}}_{k \in \nn}$.
    Then $\mathcal{G}$ is closed under analytic continuation,  has finite orbits and 
    $G:=\mathcal{G}_0$ is an infinite group that has the finite orbits property.
\end{pro}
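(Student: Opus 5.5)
Take the sequence ${\{\phi_k\}}_{k \geq 1}$ given by Proposition \ref{pro:natural_aux}. The plan is to establish three facts in turn: the pseudogroup $\mathcal{G}$ is closed under analytic continuation, every $\mathcal{G}$-orbit is finite, and the group of germs $G = \mathcal{G}_0$ is infinite.

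\emph{Structure of $\mathcal{G}$.} Condition (\ref{cond5}) gives $\phi_k^{2} = \phi_{k-1}$ on $\overline{D_k}$. Since $\phi_1 = \mathrm{id}$, induction yields $\phi_k^{2^{k-1}} = \mathrm{id}$ on $D_k$. So $\phi_k$ is a finite order automorphism of $D_k$. Using the Riemann map, it is conjugate to the rotation $z \mapsto \lambda_k z$ of the unit disc, because $\phi_k'(0) = \lambda_k$ and a finite order automorphism of a disc fixing a point is conjugate to a rotation.

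On $D_k$ each $\phi_j$ with $j \leq k$ equals a power $\phi_k^{2^{k-j}}$. It follows that on $D_k$ every element of $\mathcal{G}$ built from $\phi_1,\hdots,\phi_k$ coincides locally with a power of $\phi_k$.

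The orbit of a point ${\bf z}$ is described as follows. Let $k({\bf z})$ be the largest $k$ with ${\bf z} \in D_k$. This index is finite for ${\bf z} \neq 0$ because $\mathrm{diam}(\overline{D_k}) \to 0$ and the discs are nested. The generators $\phi_j$ with $j > k({\bf z})$ are not defined at ${\bf z}$. Each $\phi_j$ preserves $D_j$, so it maps $D_{k({\bf z})}$ into itself whenever $j \leq k({\bf z})$, and it also preserves the complements $D_i \setminus D_{i+1}$ for $i \geq j$. Restrictions and gluings of these maps produce no new points. Hence the orbit of ${\bf z}$ is contained in $\{\phi_{k({\bf z})}^m({\bf z}) : 0 \leq m < 2^{k({\bf z})-1}\}$, which is finite. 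The orbit of $0$ is $\{0\}$. This gives the finite orbits property of $\mathcal{G}$, and therefore of $G$ by Definition \ref{def:fin_orb_prop}.

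\emph{$G$ is infinite.} The germs of $\phi_k$ at $0$ have derivative $\lambda_k$, and these are pairwise distinct, so the germs are pairwise distinct. Thus $G$ contains infinitely many elements, and as a group it is isomorphic to the $2$-primary roots of unity, via the injection $\phi \mapsto \phi'(0)$ (compare Lemma \ref{lem:fin_orb_1}).

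\emph{Closure under analytic continuation.} This is the main obstacle. The key input is condition (\ref{cond4}). Suppose an element of $\mathcal{G}$ is defined near a point ${\bf z}$. By the structure above it is locally a power $\phi_k^m$ restricted to some open set inside $D_k$. Its maximal analytic continuation is then $\phi_k^m$ on all of $D_k$, because $D_k$ is connected. That map already lies in $\mathcal{G}$.

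Two points need care here. First, we must exclude the possibility that $\phi_k^m$ continues past $\partial D_k$ even though $\phi_k$ itself does not. To handle this, pick $r$ with $rm \equiv 1 \pmod{2^{k-1}}$ when $m$ is odd. If $\phi_k^m$ continued past a boundary point, then $(\phi_k^m)^r = \phi_k$ would continue as well, contradicting condition (\ref{cond4}). When $m$ is even, $\phi_k^m$ is a power of $\phi_{k-1}$ with odd exponent (or the identity), so we reduce to $D_{k-1}$ and argue the same way there. Second, we must check that continuation cannot leave the region governed by a single index $k$. This follows from the same argument, since the maximal domain of $\phi_k^m$ is exactly $D_{k'}$ for the appropriate $k'$.
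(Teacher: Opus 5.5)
Your finite-orbits argument and your proof that $G$ is infinite are correct and follow the paper's proof. On $D_k\setminus D_{k+1}$ only $\phi_1,\hdots,\phi_k$ act, and they act as powers of $\phi_k$, so orbits there are $\langle\phi_k\rangle$-orbits; then $\mathrm{diam}(\overline{D_k})\to 0$ gives finiteness, and the distinct multipliers $\lambda_k$ give infinitely many germs.

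\textbf{The gap is in the analytic-continuation step for odd powers.} You correctly note that closure under analytic continuation amounts to each odd power $\phi_k^m$ ($k\geq 2$) having natural domain $D_k$. However, the implication ``if $\phi_k^m$ continued past $p\in\partial D_k$, then $(\phi_k^m)^r=\phi_k$ would continue past $p$'' is not valid. Put $g=\phi_k^m$. To continue $g^r$ past $p$, you need $g$ to continue past each of $p, g(p),\hdots,g^{r-1}(p)$. In general these are distinct points of $\partial D_k$, and an extension of $g$ at the single point $p$ says nothing about the others. So no contradiction with Condition \eqref{cond4} arises.

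Condition \eqref{cond4} concerns only $\phi_k$ and does not by itself control the other generators of $\langle\phi_k\rangle$. A priori, an odd power could map a real-analytic piece of $\partial D_k$ onto another real-analytic piece and extend there by Schwarz reflection, while $\phi_k$ itself extends nowhere. So Proposition \ref{pro:natural_aux}, used as a black box, does not give what you need.

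\textbf{How to repair it.} Use the construction itself. For $k\geq 2$ and $m$ odd,
\[ \phi_k^m=\psi_{k-2}^{-1}\circ\rho_{k-1}^{-1}\circ(\lambda_k^m z)\circ\rho_{k-1}\circ\psi_{k-2}, \]
where $\psi_{k-2}$ is a biholomorphism on a neighbourhood of $\overline{D_k}\subset D_{k-1}$. Hence extendability past $\partial D_k$ is equivalent to extendability of the middle map past $\partial E_{k-1}$. Since $\lambda_k^{m2^{k-2}}=-1$ for $m$ odd, that middle map lies over the same involution $\tau_{k-1}^{-1}\circ(-z)\circ\tau_{k-1}$ via $\pi_{k-1}$, exactly as in Lemma \ref{lem:nopast}. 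Lemma \ref{lem:tau_noext} then rules out extension past any boundary point, because this involution exchanges the analytic arc $\alpha$ with the nowhere-analytic arc $\beta$. Equivalently, you may strengthen Condition \eqref{cond4} to cover all odd powers of $\phi_k$.

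The paper's own proof only records that $D_k$ is the natural domain of $\phi_k$ by Condition \eqref{cond4}. Your concern about powers is therefore legitimate, but the argument you give for it does not work.
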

\begin{proof}
    Let us assume that Proposition \ref{pro:natural_aux} holds.
    The map $\phi_{k}$ is conjugated to the rotation $z \mapsto \lambda_k z$ in the closed disc $\overline{D_k}$ and 
    $D_k$ is the natural domain of definition of the holomorphic map $\phi_k$  by Condition \eqref{cond4}.
    As a consequence of Condition \eqref{cond5}, the 
    orbits of $\mathcal{G}$ in $D_{k} \setminus D_{k+1}$ coincide with the orbits of $\langle \phi_k \rangle$ 
    and thus all them have $2^{k-1}$ elements for any $k \in \nn$. 
    Since $\lim_{k \to \infty} \mathrm{diam} (\overline{D_{k}})  = 0$, 
    $\mathcal{G}$ has finite orbits and $G$ has the finite orbits property.
\end{proof}
Let us introduce the setting for the proof of Proposition \ref{pro:natural_aux}.
Consider the map $\pi_k (z) =  z^{2^{k-1}}$ defined in $\cc$ for $k \in \nn$. 
The following result is key to obtain Condition \ref{cond4}.
\begin{lemma}
    \label{lem:2arcs}
    There exist a homeomorphism $\tau: \overline{D} \to \overline{B}(0, c)$ such that 
    $D$ and $\overline{D}$ are an open and a closed topological disc respectively containing $0$, 
    and closed arcs $\alpha$ and $\beta$ in $\partial D$ such that 
\begin{itemize}
    \item $\tau_{|D}$ is holomorphic and $\tau' (0)=1$;
    \item $\alpha \cup \beta = \partial D$ and $\sharp (\alpha \cap \beta) =2$;
    \item $\tau (\alpha)$ and $\tau (\beta)$ are arcs in $\partial B(0,c)$ of equal length $\pi c$; 
    \item the interior of $\alpha$ is a real analytic curve;
    \item $\beta$ is not real analytic at any point.
\end{itemize}
\end{lemma}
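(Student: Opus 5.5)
We obtain $\tau$ as the inverse of a Riemann map onto a Jordan domain whose boundary we choose ourselves. We first fix a closed Jordan curve $\Gamma=\alpha\cup\beta$ through two points $p,q$. Here $\alpha$ is a real analytic arc from $p$ to $q$, say a half circle. The arc $\beta$ from $q$ back to $p$ is a rectifiable Jordan arc that is nowhere real analytic, for instance the graph of a nowhere analytic $C^\infty$ function over a segment, bent so that it closes up with $\alpha$ and has no self-intersections. Let $D$ be the bounded component of $\mathbb{C}\setminus\Gamma$; after a translation we may assume $0\in D$. By the Riemann mapping theorem there is a biholomorphism $\tau:D\to B(0,c)$ with $\tau(0)=0$ and $\tau'(0)=1$. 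Here $c$ is the conformal radius of $D$ at $0$, obtained by rescaling the unit disc normalization. Carath\'eodory's theorem then extends $\tau$ to a homeomorphism $\overline D\to\overline B(0,c)$.

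The extended map $\tau$ need not send $\alpha$ and $\beta$ to half circles, so we adjust the curve. Consider the harmonic measures $\omega(0,\alpha,D)$ and $\omega(0,\beta,D)$. The arcs $\tau(\alpha)$ and $\tau(\beta)$ have lengths $2\pi c\,\omega(0,\alpha,D)$ and $2\pi c\,\omega(0,\beta,D)$, because $\tau(0)=0$ and harmonic measure at the center of a disc is normalized arc length. So we need $\omega(0,\alpha,D)=1/2$. We get this by moving the base point: as $z_0$ runs along a path in $D$ from near the interior of $\alpha$ to near the interior of $\beta$, the quantity $\omega(z_0,\alpha,D)$ varies continuously from close to $1$ to close to $0$. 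Hence some $z_0$ gives exactly $1/2$. We translate $z_0$ to the origin and then apply the construction above.

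It remains to check the analyticity statements. The interior of $\alpha$ is real analytic by construction, and the translation preserves this. The arc $\beta$ is nowhere real analytic also by construction. The arcs $\alpha,\beta$ cover $\partial D$ and meet exactly in $\{p,q\}$. None of these properties uses $\tau$, which is why they survive the normalization of the base point.

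The main obstacle is producing $\beta$: a Jordan arc that is nowhere real analytic, yet tame enough that Carath\'eodory applies and it joins $\alpha$ as a simple closed curve. I would take a smooth function $g$ on $[0,1]$ that is nowhere analytic, such as a lacunary Fourier-type series $\sum 2^{-k}\cos(2^{k^2}x)$ adjusted to be $C^\infty$ but not analytic near any point. I would then let $\beta$ be the graph of $g$, scaled small and capped by a large semicircle $\alpha$. Nowhere analyticity of the graph as a curve follows because a real analytic parametrization would force $g$ to be analytic locally, by the implicit function theorem.
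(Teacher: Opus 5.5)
Your proof is correct and follows essentially the paper's route: a Jordan curve made of a real analytic arc and a nowhere analytic arc, then the Riemann map with Carath\'eodory extension, then the normalization $\tau'(0)=1$. Your choice of base point with $\omega(z_0,\alpha,D)=1/2$ is exactly what the paper obtains from the disc automorphism $\sigma$ with $z_0=(\sigma\circ\theta)^{-1}(0)$, and your intermediate value argument gives a concrete reason why such a $\sigma$ exists. One small slip: $\sum 2^{-k}\cos(2^{k^2}x)$ is not $C^\infty$ (use, e.g., $\sum 2^{-k^2}\cos(2^{k}x)$ instead), though Carath\'eodory's theorem needs no tameness beyond $\alpha\cup\beta$ being a Jordan curve.
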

\begin{proof}
    Let $\beta'$ be a simple smooth path that is nowhere real analytic \cite{Fabius:nowhere_analytic}. 
    Now consider a simple path $\alpha'$ such that the interior of $\alpha'$ is real analytic, 
    $\alpha' \cup \beta'$ is  a simple closed path and $\sharp (\alpha' \cap \beta') =2$.
    Let $D'$ be the bounded connected component of $\C \setminus (\alpha' \cup \beta')$; it is an open disc
    whose closure $\overline{D'}$ is a closed topological disc such that $\partial{D'} = \alpha' \cup \beta'$.

    Consider the Riemann map  $\theta: D' \to {B}(0,1)$; it extends to a homeomorphism $\overline{D'} \to \overline{B}(0,1)$
    since $\partial D'$ is a simple closed path.
    The sets $\theta (\alpha')$ and $\theta (\beta')$ are arcs. Thus, there exists a M\"{o}bius transformation $\sigma$ such that 
    $\sigma (B(0,1)) = B(0,1)$ and $\sigma (\theta (\alpha'))$ and $\sigma (\theta (\beta'))$ are arcs of length $\pi$.
    Now, consider $z_0 = (\sigma \circ \theta)^{-1}(0)$ and define
    \[ \tau = (\mu z) \circ \sigma \circ \theta \circ (z+z_0) \]
    where $\mu = 1/ (\sigma \circ \theta )' (z_0)$. By construction, we obtain $\tau'(0)=1$. We denote 
    $D = D' - z_{0}$, $\alpha = \alpha' - z_0$,   $\beta = \beta' - z_0$ and $c = |\mu|$.
    Clearly $\tau: \overline{D} \to \overline{B}(0, c)$ satisfies all conditions.
\end{proof}
   Since we want $\lim_{k \to \infty} \mathrm{diam} (\overline{D_{k}})  = 0$ in Proposition \ref{pro:natural_aux}, we are
   going to consider scaled versions of $\tau$ defined in small domains. 
\begin{defi}
    Consider a sequence ${(\epsilon_k)}_{k \geq 1}$ of positive real numbers to be determined. We define
    $\tau_k: \epsilon_k \overline{D} \to B(0, \epsilon_k c)$ by the formula $\tau_k (z) = \epsilon_k \tau (z/\epsilon_k)$
    for $k \in \nn$.
\end{defi}
\begin{lemma}
    \label{lem:tau_noext}
    $\tau^{-1} \circ (-z) \circ \tau$ cannot be extended holomorphically past any point in $\partial D$.
    In particular, $\tau_{k}^{-1} \circ (-z) \circ \tau_{k}$ cannot be extended holomorphically past any point in $\partial (\epsilon_k D)$
    for any $k \in \nn$.
\end{lemma}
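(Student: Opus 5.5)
Write $h = \tau^{-1} \circ (-z) \circ \tau : \overline{D} \to \overline{D}$. It is a homeomorphism, holomorphic in $D$, and an involution fixing $0$. Since $z \mapsto -z$ swaps the two half circles of $\partial B(0,c)$, and $\tau(\alpha)$, $\tau(\beta)$ are complementary arcs of length $\pi c$ (both determined by the common endpoints $\tau(\alpha \cap \beta)$), we get $-\tau(\alpha) = \tau(\beta)$. Hence $h(\alpha) = \beta$ and $h(\beta) = \alpha$. The plan is to argue by contradiction: suppose $h$ extends holomorphically to a neighborhood $N$ of some ${\bf p} \in \partial D$. Then show that this extension maps a piece of one arc onto a piece of the other, and that it does so biholomorphically. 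Since biholomorphisms preserve real analyticity of curves, this contradicts the fact that the interior of $\alpha$ is real analytic while $\beta$ is nowhere real analytic.

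\textbf{Step 1: the extension is locally biholomorphic at ${\bf p}$.} Let $\tilde h$ be the extension on $N$. It agrees with $h$ on $N \cap \overline{D}$ by continuity. Since $h$ is injective on $\overline{D}$, $\tilde h$ is injective on $N \cap \overline{D}$, which contains an open set accumulating at ${\bf p}$. If $\tilde h'({\bf p}) = 0$, then $\tilde h$ is locally $k$-to-$1$ with $k \ge 2$ near ${\bf p}$. I would then need to check that this contradicts injectivity on the one-sided region $N \cap \overline{D}$. Alternatively, I can use $h \circ h = \mathrm{id}$ to conclude that $\tilde h$ has a local holomorphic left inverse near ${\bf p}$, which forces $\tilde h'({\bf p}) \neq 0$; this needs $h$ to extend near $h({\bf p})$ as well. \emph{This is the main obstacle.} The cleanest route is probably the following. $\tilde h$ is open, and near ${\bf p}$ it maps $\partial D \cap N$ into $\partial D$, since $h$ maps $\partial D$ to $\partial D$. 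So if ${\bf p}$ were a critical point, $\tilde h^{-1}(\partial D)$ near ${\bf p}$ would contain $k \geq 2$ arcs through ${\bf p}$. Some of these arcs would then cut into the interior of $\overline{D}$, where the homeomorphism $h$ maps into the open set $D$, not onto $\partial D$. This is a contradiction, so ${\bf p}$ is not a critical point.

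\textbf{Step 2: case analysis on ${\bf p}$.} Choose a point ${\bf q} \neq {\bf p}$ on $\partial D$ near ${\bf p}$ that does not lie in $\alpha \cap \beta$. By Step 1, after shrinking $N$, $\tilde h$ is biholomorphic on a neighborhood of ${\bf q}$. If ${\bf q}$ lies in the interior of $\alpha$, then $\tilde h$ maps a small real analytic subarc of $\alpha$ at ${\bf q}$ onto a subarc of $\beta$. That subarc would be real analytic, a contradiction. If ${\bf q} \in \beta$, apply the biholomorphism $\tilde h^{-1}$ near $h({\bf q}) \in \alpha$. After possibly perturbing ${\bf q}$ so that $h({\bf q})$ lies in the interior of $\alpha$, this shows that a subarc of $\beta$ near ${\bf q}$ is the image of a real analytic arc under a biholomorphism. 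So it is real analytic, again a contradiction. This covers all ${\bf p}$, including the two points of $\alpha \cap \beta$, because the argument only uses nearby points ${\bf q}$.

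\textbf{Scaled version.} Since $\tau_k(z) = \epsilon_k \tau(z/\epsilon_k)$, we have $\tau_k^{-1} \circ (-z) \circ \tau_k = \epsilon_k \, h(z/\epsilon_k)$. This is conjugate to $h$ by the linear map $z \mapsto \epsilon_k z$, which sends $\partial D$ to $\partial(\epsilon_k D)$. Non-extendability past every boundary point is preserved under this conjugation, which proves the second assertion.
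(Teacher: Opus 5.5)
Your argument is the paper's argument, given in more detail. The involution $h=\tau^{-1}\circ(-z)\circ\tau$ swaps $\alpha$ and $\beta$, since $\tau(\alpha)$ and $\tau(\beta)$ are complementary half-circles exchanged by $-z$. A locally biholomorphic extension would then carry a real analytic subarc onto a subarc of the nowhere real analytic $\beta$, or the reverse. The paper says this in two lines and leaves the local biholomorphicity implicit, so your extra care is appropriate, and your scaling step is correct.

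The flaw is in Step 1. You assert that at a critical point some of the extra components of $\tilde h^{-1}(\partial D)$ must enter $D$, but you give no justification, and the claim can fail at a corner. For example, $w\mapsto w^{2}$ maps the open first quadrant injectively onto the upper half-plane and its two boundary rays into $\mathbb{R}$. The two extra preimage rays of $\mathbb{R}$, however, lie outside the closed quadrant. You apply Step 1 at ${\bf p}$, which may be one of the two points of $\alpha\cap\beta$, so the step does not go through as written. At points ${\bf q}\notin\alpha\cap\beta$ it can be repaired: if $\partial D$ has tangent lines at ${\bf q}$ and at $h({\bf q})$, the $2k$ preimage half-arcs are spaced by angle $\pi/k$, so $k-1\geq 1$ of them lie on the $D$ side.

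You do not actually need Step 1. The map $\tilde h$ is nonconstant, so its critical points in $N$ are isolated. You can therefore choose ${\bf q}\in(\partial D\cap N)\setminus(\alpha\cap\beta)$ arbitrarily close to ${\bf p}$ with $\tilde h'({\bf q})\neq 0$. The inverse function theorem then gives the local biholomorphism Step 2 needs, and the rest of your argument goes through unchanged.
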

\begin{proof}
    Denote $\sigma = \tau^{-1} \circ (-z) \circ \tau$ and consider the notations in Lemma \ref{lem:2arcs}.
    Note that since $\tau (\alpha)$ and $\tau (\beta)$ have the same length, we obtain $\sigma (\alpha) = \beta$ and 
    $\sigma (\beta) = \alpha$. Since the interior of $\alpha$ is real analytic and $\beta$ is not real analytic at any point, 
    $\sigma$ can not be extended holomorphically past any point of $\partial D$.
\end{proof}
     Now, we lift the construction in Lemma \ref{lem:tau_noext} by $\pi_k$.
\begin{defi}
Given $k \in \nn$ we define $E_k = \pi_{k}^{-1} (\epsilon_k D)$, the unique $\delta_k \in \mathbb{R}^{+}$ with 
$\pi_{k} (\delta_k) = c \epsilon_k$ and the unique map $\rho_k : \overline{E_k} \to  \overline{B}(0, \delta_k)$ such that following diagram commutes
\[
\begin{tikzcd}
 \overline{E_k} \arrow{r}{\rho_{k}} \arrow[swap]{d}{\pi_k} & \overline{B}(0, \delta_k) \arrow{d}{\pi_k} \\
\overline{\epsilon_k D} \arrow{r}{\tau_k} &\overline{B}(0, c \epsilon_k)
\end{tikzcd}
\]
and $\rho_{k}'(0)=1$. 
\end{defi} 
\begin{lemma}
     \label{lem:rho_aux}
    Consider a sequence ${(\epsilon_k)}_{k \geq 1}$ in $\mathbb{R}^{+}$ such that $\epsilon_{k+1} << \epsilon_k$ for any $k \in \nn$. Then 
    \begin{enumerate}[(a)]
        \item \label{conda}$E_k$ (resp. $\overline{E_k}$) is an open (resp. closed)  topological disc such that $0 \in E_k$;
        \item \label{condb} $(\rho_k)_{|E_k} : E_k \to B (0, \delta_k)$ is a biholomorphism; 
        \item \label{condc} $\rho_k$ cannot be holomorphically continued past any point of $\partial E_k$;
        \item \label{conde} $\rho_{k}^{-1} (\overline{E_{k+1}}) \subset E_{k}$;
        \item \label{condd} $\rho_k$ is of the form $z h_{k} (\pi_k(z))$ where $h_k$ is continuous in $\pi_k (\overline{E_k})$, holomorphic
    in $\pi_{k} (E_k)$ and $h_{k}(0) = 1$
\end{enumerate}
    for any $k \in \nn$. 
\end{lemma}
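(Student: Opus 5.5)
The plan is to build $\rho_k$ explicitly through a $2^{k-1}$-th root, read off (e), (a) and (b) from that formula, and get (d) by choosing $\epsilon_{k+1}$ small. Part (c) is the delicate point, and I expect it to be the main obstacle; see the last paragraph.

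\emph{Construction and (e).} Put $N = 2^{k-1}$. Since $\tau(0)=0$, $\tau'(0)=1$ and $\tau$ is injective on $\overline{D}$, we can write $\tau_k(w) = w\, g_k(w)$. Here $g_k$ is continuous and nowhere vanishing on $\epsilon_k \overline{D}$, holomorphic on $\epsilon_k D$, and $g_k(0)=1$. The set $\epsilon_k\overline{D}$ is a closed topological disc, hence simply connected, so there is a continuous branch $h_k = g_k^{1/N}$ with $h_k(0)=1$, and it is holomorphic in the interior. We then set $\rho_k(z) = z\, h_k(z^{N})$ on $\overline{E_k}=\pi_k^{-1}(\epsilon_k\overline{D})$. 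This gives
\[ \pi_k(\rho_k(z)) = z^{N} g_k(z^{N}) = \tau_k(\pi_k(z)) \quad \text{and} \quad \rho_k'(0)=1. \]
Uniqueness of the lift follows because two lifts differ by an $N$-th root of unity, and that root is fixed by the derivative at $0$. This proves (e).

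\emph{(a) and (b).} First, $\rho_k$ is injective. Suppose $\rho_k(z_1)=\rho_k(z_2)$. Applying $\pi_k$ gives $\tau_k(z_1^N)=\tau_k(z_2^N)$, so $z_2=\omega z_1$ with $\omega^N=1$. Then $\rho_k(z_2)=\omega\rho_k(z_1)$, which forces $\omega=1$ or $z_1=0$. Second, $\rho_k$ maps $\overline{E_k}$ onto $\overline{B}(0,\delta_k)$. The diagram shows $\rho_k(\overline{E_k})\subset \pi_k^{-1}(\overline{B}(0,c\epsilon_k))=\overline{B}(0,\delta_k)$. For the reverse inclusion, any $y$ in the ball has $\pi_k(y)=\tau_k(w)$ for some $w$, and the $N$ preimages of $w$ are mapped by $\rho_k$ bijectively onto the $N$ preimages of $\tau_k(w)$, one of which is $y$. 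Hence $\rho_k$ is a homeomorphism $\overline{E_k}\to\overline{B}(0,\delta_k)$, biholomorphic on the interior, and $\overline{E_k}=\rho_k^{-1}(\overline{B}(0,\delta_k))$ is a closed topological disc containing $0$. This gives (a) and (b).

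\emph{(d).} Both $\overline{E_{k+1}}$ and $\rho_k^{-1}(\overline{E_{k+1}})$ are determined once $\epsilon_1,\dots,\epsilon_{k+1}$ are fixed. We have $\overline{E_{k+1}}\subset \overline{B}(0,(C\epsilon_{k+1})^{1/2^{k}})$ with $C=\max_{\overline D}|z|$, and $\rho_k^{-1}$ is continuous with $\rho_k^{-1}(0)=0\in E_k$. Therefore, once $\epsilon_k$ is fixed, choosing $\epsilon_{k+1}$ small enough gives $\rho_k^{-1}(\overline{E_{k+1}})\subset E_k$.

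\emph{(c), the main obstacle.} Suppose $\rho_k$ continues holomorphically to a neighbourhood $\Omega$ of $p\in\partial E_k$. Then $\partial E_k\cap\Omega\subset \rho_k^{-1}(\partial B(0,\delta_k))$, which is a real analytic set. Because $\pi_k$ is a local biholomorphism near $p\neq 0$, the arc $\pi_k(\partial E_k\cap\Omega)$ of $\partial(\epsilon_k D)$ would then be contained in a real analytic curve. Points of $\partial(\epsilon_k D)$ near $\pi_k(p)$ lie in $\epsilon_k\beta$, except possibly when $\pi_k(p)$ is an interior point of $\epsilon_k\alpha$. So for $p$ over $\epsilon_k\beta$, including the two endpoints of $\alpha$, this contradicts the nowhere analyticity of $\beta$. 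For $p$ over the interior of $\epsilon_k\alpha$, I would first check carefully whether Schwarz reflection across the analytic arc $\alpha$ actually extends $\tau$. If it does, the literal statement fails there, and I would try to redo Lemma \ref{lem:2arcs} with a boundary that is nowhere real analytic, choosing $\alpha$ nowhere analytic as well. Then the argument above applies at every boundary point. Lemma \ref{lem:tau_noext} would still be available, since its proof only needs $\sigma$ to exchange $\alpha$ and $\beta$ and the two arcs to have incompatible local regularity; this would need to be arranged by a suitable choice of the two arcs.
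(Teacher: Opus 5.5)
Your arguments for (a), (b), (d) and (e) are correct and follow the paper's construction. For (b), you replace the paper's covering-space argument with an explicit injectivity and surjectivity check based on $\rho_k(\omega z)=\omega\rho_k(z)$ for $\omega^{2^{k-1}}=1$, which is cleaner. Your argument for (c) at points of $\partial E_k$ lying over $\epsilon_k\beta$, the endpoints of $\alpha$ included, is also fine.

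The remaining case of (c) cannot be closed, because (c) is false as stated; your hedge should be resolved that way.
\begin{itemize}
\item \emph{Why $\tau$ extends.} Take a regular point of the open analytic arc $\alpha$ and a local conformal chart $\chi$ straightening $\alpha$ to a real segment. Then $F=\tau\circ\chi^{-1}$ is holomorphic on a half-disc, continuous up to the diameter, with $|F|=c$ there. The Schwarz reflection $\zeta\mapsto c^{2}/\overline{F(\bar\zeta)}$ continues it across the diameter.
\item \emph{Case $k=1$.} Here $\rho_1=\tau_1$, so $\rho_1$ continues past those points of $\partial E_1=\epsilon_1\partial D$.
\item \emph{General $k$.} If $\pi_k(p)$ is such a point of $\epsilon_k\alpha$, then $h_k=(\tau_k(w)/w)^{1/2^{k-1}}$ continues across $\pi_k(p)\neq 0$. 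Hence $\rho_k(z)=z\,h_k(z^{2^{k-1}})$ continues past $p$.
\end{itemize}
The paper's justification (``derives immediately from Lemma \ref{lem:tau_noext}'') does not apply. That lemma concerns $\tau^{-1}\circ(-z)\circ\tau$, whose obstruction comes from exchanging $\alpha$ and $\beta$; it says nothing about $\tau$ itself.

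Item (c) is never used, though. The non-extendability needed in Proposition \ref{pro:natural_aux} comes from Lemma \ref{lem:nopast}. That lemma is derived directly from Lemma \ref{lem:tau_noext} through the commutative diagram, and it holds with the original arcs. So the right repair is to replace (c) by the statement that $\rho_k^{-1}\circ(\lambda_{k+1}z)\circ\rho_k$ does not continue past $\partial E_k$ (or simply drop (c)). Redesigning Lemma \ref{lem:2arcs} is unnecessary.

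If you want (c) literally, your redesign needs a concrete regularity class that is preserved by local biholomorphisms, holds on every subarc of one arc, and holds on no subarc of the other. One example is $\alpha$ of class $C^\infty$ and nowhere analytic, with $\beta$ nowhere $C^2$. Nowhere analyticity of both arcs alone does not yield the obstruction for $\sigma$ in Lemma \ref{lem:tau_noext}.
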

\begin{proof}
    Conditions \ref{conda} and \ref{condd} hold by construction.
    The map $\rho_k :  \overline{E_k}  \to \overline{B}(0, \delta_k)$ is a local homeomorphism that is holomorphic in $E_k$ for any $k \in \nn$. 
    Since $\rho_k$ is a covering and $\overline{B}(0, \delta_k)$ is simply connected, it follows that $\rho_k$ is a homeomorphism.
    So, Condition \ref{condb} is satisfied.
    Condition \ref{condc} derives immediately from Lemma \ref{lem:tau_noext}. We obtain Condition \ref{conde} by requiring 
    $\epsilon_{k+1} << \epsilon_k$.
\end{proof}
Up to conjugacy, the family ${\{ \phi_k \}}_{k \geq 1}$ in Proposition \ref{pro:natural_aux} is given by next result. 
\begin{lemma}
\label{lem:nopast}
$\rho_{k}^{-1} \circ (\lambda_{k+1} z) \circ \rho_k: E_k \to E_k$ does not extend holomorphically past any point in 
$\partial{E_k}$ for any $k \in \nn$.
\end{lemma}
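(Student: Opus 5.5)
The plan is to push the problem down by the covering map $\pi_k$ and reduce it to Lemma \ref{lem:tau_noext}. Write $\sigma_k = \rho_{k}^{-1} \circ (\lambda_{k+1} z) \circ \rho_k : E_k \to E_k$. It is well defined because $\rho_k : E_k \to B(0,\delta_k)$ is a biholomorphism (Lemma \ref{lem:rho_aux}) and $B(0,\delta_k)$ is invariant under the rotation by $\lambda_{k+1}$.

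The key algebraic observation is $\lambda_{k+1}^{2^{k-1}} = e^{2\pi i 2^{k-1}/2^{k}} = -1$. Hence $\pi_k (\lambda_{k+1} w) = - \pi_k (w)$ for every $w$. Combining this with the commutative diagram $\pi_k \circ \rho_k = \tau_k \circ \pi_k$ on $\overline{E_k}$, and with $\pi_k \circ \rho_k^{-1} = \tau_k^{-1} \circ \pi_k$ on $B(0,\delta_k)$, which follows from the same diagram since both $\rho_k$ and $\tau_k$ are bijections, I will derive the semiconjugacy
\[ \pi_k \circ \sigma_k = \tau_k^{-1} \circ (-z) \circ \tau_k \circ \pi_k \quad \text{on } E_k . \]
So $\sigma_k$ is a lift through $\pi_k$ of the map $\tau_k^{-1} \circ (-z) \circ \tau_k$ studied in Lemma \ref{lem:tau_noext}.

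Next, argue by contradiction. Suppose $\sigma_k$ extends holomorphically to a connected open neighborhood $N$ of some $p \in \partial E_k$, say as $\tilde\sigma$. Since $0 \in E_k$, we have $p \neq 0$, so $\pi_k$ is a local biholomorphism near $p$. Shrinking $N$, let $\chi$ be the inverse of $\pi_{k|N}$ defined on $N' = \pi_k(N)$, an open neighborhood of $q = \pi_k(p)$. Because $E_k = \pi_k^{-1}(\epsilon_k D)$, we get $\pi_k(\partial E_k) \subset \partial(\epsilon_k D)$, so $q \in \partial(\epsilon_k D)$. Moreover, $\chi$ maps $N' \cap \epsilon_k D$ into $N \cap E_k$. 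Then $\pi_k \circ \tilde\sigma \circ \chi$ is holomorphic on $N'$. By the semiconjugacy, it coincides with $\tau_k^{-1} \circ (-z) \circ \tau_k$ on $N' \cap \epsilon_k D$. Thus $\tau_k^{-1} \circ (-z) \circ \tau_k$ extends holomorphically past $q \in \partial(\epsilon_k D)$, which contradicts Lemma \ref{lem:tau_noext}.

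The only delicate point is this bookkeeping of domains. Specifically, one must check that the local section $\chi$ sends the part of $N'$ inside $\epsilon_k D$ into $E_k$, where the identity for $\pi_k \circ \sigma_k$ is valid. This holds precisely because $E_k$ is the full preimage $\pi_k^{-1}(\epsilon_k D)$. One must also check that boundary points correspond under $\pi_k$, which follows from the same fact together with the openness of $\pi_k$ away from $0$. Everything else is the formal computation $\lambda_{k+1}^{2^{k-1}} = -1$.
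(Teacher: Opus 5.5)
Your proposal is correct and follows the paper's own argument. The paper simply observes that the diagram conjugating $\rho_k^{-1}\circ(\lambda_{k+1}z)\circ\rho_k$ to $\tau_k^{-1}\circ(-z)\circ\tau_k$ via $\pi_k$ commutes, which is your semiconjugacy resting on $\lambda_{k+1}^{2^{k-1}}=-1$, and then invokes Lemma~\ref{lem:tau_noext}; your local-inverse argument at a boundary point $p\neq 0$ just spells out the transfer step that the paper leaves implicit.
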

\begin{proof}
Since the diagram 
\[
\begin{tikzcd}
 \overline{E_k} \arrow{r}{\rho_{k}} \arrow[swap]{d}{\pi_k} & \overline{B}(0, \delta_k)  \arrow{r}{\lambda_{k+1} z} \arrow{d}{\pi_k} &   
 \overline{B}(0, \delta_k) \arrow{r}{\rho_{k}^{-1}}  \arrow{d}{\pi_k} &  \overline{E_k} \arrow{d}{\pi_k}  \\
\overline{\epsilon_k D} \arrow{r}{\tau_k} &\overline{B}(0, c \epsilon_k)  \arrow{r}{-z} & \overline{B}(0, c \epsilon_k)  \arrow{r}{\tau_k^{-1}} & \overline{\epsilon_k D}
\end{tikzcd}
\]
commutes, $\rho_{k}^{-1} \circ (\lambda_{k+1} z) \circ \rho_k$ does not extend holomorphically past any point in 
$\partial{E_k}$ by Lemma \ref{lem:tau_noext}.
\end{proof}
\begin{defi}
We define $\psi_k = \rho_k \circ \hdots \circ \rho_1$, it is defined in 
$D_{k+1} := \psi_{k-1}^{-1}(E_k)$ for any $k \in \nn$ where $\psi_{0} =z$. We define
\[ \phi_{k} = \psi_{k-1}^{-1} \circ (\lambda_{k} z) \circ \psi_{k-1} \]
for $k \in \nn$.
\end{defi}
\begin{rem}
Since $\rho_k (z) = z (1 +O(z^{2^{k-1}}))$, it follows that ${(\psi_k)}_{k \geq 1}$ converges in the Krull topology (cf. Definition \ref{def:krull}) to 
some $\psi \in \diffh{}{}$.  
\end{rem}
The group $G$ can be formally linearized (see \cite{Lamy:parabolic} for a similar construction in a different context).
\begin{lemma}
We have $\phi_k =  \psi^{-1} \circ (\lambda_k z) \circ \psi$ in $\diffh{}{}$ for any $k \in \nn$. In particular, 
we obtain $G = \psi^{-1} \circ P \circ \psi \subset \diff{}{}$, 
where $P = \langle \lambda_1 z, \hdots, \lambda_k z, \hdots \rangle$. 
\end{lemma}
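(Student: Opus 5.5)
We prove the identity $\phi_k = \psi^{-1}\circ(\lambda_k z)\circ\psi$ at the level of $N$-jets for every $N$, using the Krull convergence $\psi_j\to\psi$. The idea is to show that each $\rho_j$ commutes with $\lambda_k z$ whenever $j \ge k$. We can then rewrite $\phi_k=\psi_{k-1}^{-1}\circ(\lambda_k z)\circ\psi_{k-1}$ as $\psi_j^{-1}\circ(\lambda_k z)\circ\psi_j$ for every $j\ge k-1$, and pass to the limit.

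First, the commutation. By Lemma \ref{lem:rho_aux}\eqref{condd} we have $\rho_j(z)=z\,h_j(z^{2^{j-1}})$. Since $\lambda_k=e^{2\pi i/2^{k-1}}$ and $2^{k-1}$ divides $2^{j-1}$ for $j\ge k$, we get $(\lambda_k z)^{2^{j-1}}=z^{2^{j-1}}$. Hence
\[ \rho_j(\lambda_k z)=\lambda_k z\, h_j(z^{2^{j-1}})=\lambda_k\rho_j(z), \]
so $\rho_j$ commutes with $\lambda_k z$ as germs, and therefore also as formal series.

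Second, we telescope. Since $\psi_j=\rho_j\circ\cdots\circ\rho_k\circ\psi_{k-1}$ and every factor $\rho_i$ with $i\ge k$ commutes with $\lambda_k z$, we obtain
\[ \psi_j^{-1}\circ(\lambda_kz)\circ\psi_j=\psi_{k-1}^{-1}\circ(\lambda_kz)\circ\psi_{k-1}=\phi_k \]
for all $j\ge k-1$, as germs at $0$.

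Third, we pass to the limit. For each $N$, the remark preceding the lemma gives $j^N\psi_j=j^N\psi$ for all $j$ large. The $N$-jet of a composition depends only on the $N$-jets of its factors, and the same holds for inverses. It follows that $j^N\phi_k=j^N(\psi^{-1}\circ(\lambda_kz)\circ\psi)$ for every $N$, which is the claimed formal identity.

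Finally, the statement about $G$. The group $G=\mathcal G_0$ is generated by the germs of the $\phi_k$ at $0$, since all of them fix $0$. The map $\eta\mapsto\psi^{-1}\circ\eta\circ\psi$ is a homomorphism of $\diffh{}{}$, and it sends each $\lambda_kz$ to $\phi_k$. Hence it sends the group $P$ onto $G$, which gives $G=\psi^{-1}\circ P\circ\psi$, and $G\subset\diff{}{}$ holds because each $\phi_k$ is convergent. One subtlety remains: the germs in $\mathcal G_0$ come from pseudogroup compositions, and could a priori include germs not generated by the $\phi_k$. Condition \eqref{cond5} resolves this, because it shows that all the $\phi_k$ are restrictions of a single compatible family. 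The main obstacle is this bookkeeping; it is handled by noting that germs at $0$ of pseudogroup words are exactly the corresponding group words in the germs $\phi_k$.
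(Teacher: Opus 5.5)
Your proof is correct and follows the paper's own argument. The paper also gets $\rho_j\circ(\lambda_k z)=(\lambda_k z)\circ\rho_j$ for $j\ge k$ from $\rho_j(z)=z\,h_j(\pi_j(z))$, deduces $\phi_k=\psi_j^{-1}\circ(\lambda_k z)\circ\psi_j$ for all $j\ge k-1$, and passes to the Krull limit; you simply spell that limit out at the level of $N$-jets.

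Your extra bookkeeping for $G=\psi^{-1}\circ P\circ\psi$ is fine. However, germs at $0$ of pseudogroup elements are words in the germs of the $\phi_k$ simply because every generator fixes $0$; Condition \eqref{cond5} plays no role there.
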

\begin{proof}
Since
\[ [\rho_j \circ (\lambda_k z)](z) =
\rho_j (\lambda_k z) = \lambda_k z h_{j} (\pi_j (\lambda_k  z)) =  \lambda_k  z h_{j} (\pi_j (z)) = [(\lambda_k z) \circ \rho_j](z) \]
for $j \geq k$, it follows that 
\[  \phi_{k} = \psi_{k-1}^{-1} \circ (\lambda_{k} z) \circ \psi_{k-1} =  \psi_{j}^{-1} \circ (\lambda_{k} z) \circ \psi_{j} \]
for any $j \geq k-1$ and thus $\phi_{k} =  \psi^{-1} \circ \sigma_{k} \circ \psi$ for any $k \in \nn$.
\end{proof}
\begin{proof}[Proof of Proposition \ref{pro:natural_aux}]
By considering a sequence  ${(\epsilon_k)}_{k \geq 1}$ in $\mathbb{R}^{+}$ converging sufficiently quickly to $0$, 
we obtain Conditions \ref{conda}-\ref{conde} (Lemma \ref{lem:rho_aux}).
Moreover, we have
\[ \phi_{k+1}^{2} = (\psi_{k}^{-1} \circ (\lambda_{k+1} z) \circ \psi_{k})^{2} =  \psi_{k}^{-1} \circ (\lambda_{k} z) \circ \psi_{k} = \psi_{k-1}^{-1} \circ (\lambda_{k} z) \circ \psi_{k-1} = \phi_k  \]
in $D_{k+1}$, since $\rho_{k}^{-1} \circ (\lambda_k z) \circ \rho_k = \lambda_k z$, for any $k \in \nn$.
Therefore, we obtain all properties in Proposition \ref{pro:natural_aux}.
\end{proof}

 {\it Declaration of Generative AI and AI-assisted technologies in the writing process}.
 During the preparation of this work the author used Gemini 3.8 Flash and 3.1 Pro in order to consult bibliography references and results already in the literature. 
 The results, proofs and writing of the paper are due to the author, who takes full responsibility for the content of the publication.

\bibliographystyle{alpha}		
\bibliography{rendu}

\end{document}